\def\@tocline#1#2#3#4#5#6#7{\relax
  \ifnum #1>\c@tocdepth % then omit
  \else
    \par \addpenalty\@secpenalty\addvspace{#2}%
    \begingroup \hyphenpenalty\@M
    \@ifempty{#4}{%
      \@tempdima\csname r@tocindent\number#1\endcsname\relax
    }{%
      \@tempdima#4\relax
    }%
    \parindent\z@ \leftskip#3\relax \advance\leftskip\@tempdima\relax
    \rightskip\@pnumwidth plus4em \parfillskip-\@pnumwidth
    #5\leavevmode\hskip-\@tempdima
      \ifcase #1
       \or\or \hskip 1em \or \hskip 2em \else \hskip 3em \fi%
      #6\nobreak\relax
    \dotfill\hbox to\@pnumwidth{\@tocpagenum{#7}}\par
    \nobreak
    \endgroup
  \fi}
\crefname{subfigure}{{\scshape Fig.}}{{\scshape Figs.}}
\Crefname{subfigure}{{\scshape Figure}}{{\scshape Figures}}
\crefname{figure}{{\scshape fig.}}{{\scshape figs.}}
\Crefname{figure}{{\scshape Figure}}{{\scshape Figures}}
\numberwithin{equation}{section}
\newtheorem{Theorem}{Theorem}[section]
\newtheorem{Proposition}[Theorem]{Proposition}
\newtheorem{Lemma}[Theorem]{Lemma}
\newtheorem*{KUC}{Koebe Uniformization Conjecture}
\newtheorem*{KCPT}{Koebe Circle Packing Theorem}
\newtheorem*{KAT}{Koebe-Andre'ev-Thurston Theorem I (for the sphere)}
\newtheorem*{KAT2}{Koebe-Andre'ev-Thurston Theorem II (for compact surfaces)}
\newtheorem*{BranchedKAT}{Polynomially Branched KAT Theorem}
\newtheorem*{DSPBVT}{Discrete Schwarz-Picard Boundary Value Theorem}
\newtheorem*{DUT}{Discrete Uniformization Theorem}
\newtheorem*{MDPT}{Maximal Disk Packing Theorem}
\newtheorem*{DBVT}{Discrete Boundary Value Theorem}
\newtheorem*{DSPL}{Discrete Schwarz-Pick Lemma}
\newtheorem*{DTTG}{Discrete Type Theorem for Graphs}
\newtheorem*{DTTPTG}{Discrete Type Theorem for Plane Triangulation Graphs}
\newtheorem*{HSKU}{He-Schramm Uniformization Theorem}
\newtheorem*{HSDU}{He-Schramm Discrete Uniformization Theorem}
\newtheorem*{DRMT}{Discrete Riemann Mapping Theorem}
\newtheorem*{HPL}{Hexagonal Packing Lemma}
\newtheorem*{DUTES}{Discrete Uniformization Theorem for Equilateral Surfaces}
\newtheorem{Conjecture}[Theorem]{Conjecture}
\newtheorem*{CTC}{Circumscribable Type Characterization}
\newtheorem*{ITC}{Incsribable Type Characterization}
\newtheorem*{MCP}{Midscribability of Convex Polyhedra}
\newtheorem*{CBM}{Convex Body Midscription}
\newtheorem*{CAL}{Cauchy Arm Lemma}
\newtheorem*{DFVL}{Discrete Four Vertex Lemma}
\newtheorem*{CCL}{Cauchy Combinatorial Lemma}
\newtheorem*{CRT}{Cauchy Rigidity Theorem}
\newtheorem*{CCHPC}{Compact Convex Hyperbolic Polyhedra Characterization}
\newtheorem*{FAR}{Face Angle Rigidity}
\newtheorem*{DTCHC}{Discrete Total Curvature for Polygonal Hyperbolic Loops}
\newtheorem*{CIHP}{Characterization of Convex Ideal Polyhedra}
\newtheorem*{CCHP}{Characterization of Convex Hyperideal Polyhedra}
\newtheorem*{WIC}{Weak Inscription Characterization}
\theoremstyle{definition}
\newtheorem*{Definition}{Definition}
\title[Combinatorics Encoding Geometry]{Combinatorics Encoding Geometry:\\
The Legacy of Bill Thurston in the Story of One Theorem}
\author{Philip L. Bowers}
\date{\today} % Activate to display a given date or no date
\begin{document}

\maketitle

\tableofcontents

\begin{abstract}
This article presents a whirlwind tour of some results surrounding the \textit{Koebe-Andre'ev-Thurston Theorem}, Bill Thurston's seminal circle packing theorem that appears in Chapter 13 of \textit{The Geometry and Topology of Three-Manifolds}. 
\end{abstract}

%\tableofcontents

\section*{Introduction} 
Bill Thurston was the most original and influential topologist of the last half-century. His impact on the discipline of geometric topology during that time is unsurpassed, and his insights in the topology and geometry of three-manifolds led finally to the resolution of the most celebrated problem of topology over the last century---The Poincar\'e Conjecture. He made fundamental contributions to many sub-disciplines within geometric topology, from the theory of foliations on manifolds to the combinatorial structure of rational maps on the two-sphere, and from geometric and automatic group theory to classical polyhedral geometry. Of course his foundational work on three-manifolds, first laid out in his courses at Princeton in the late nineteen-seventies, compiled initially as a Princeton paper-back monograph inscribed by Bill Floyd and available upon request as \textit{The Geometry and Topology of Three-Manifolds} (GTTM)~\cite{Thurston:1980}, and maturing as the famous \textit{Thurston Geometrization Conjecture} of the early nineteen-eighties, was the driving force behind the development of geometric topology for the next thirty years. The final confirmation of the Geometrization Conjecture by Giorgi Perelman using the flow of Ricci curvature, following a program that had been introduced by Richard Hamilton, is one of the crown jewels of twentieth century mathematics.

Thurston marks a watershed in the short history of topology,\footnote{I will use the term \textit{topology} henceforth to mean \textit{geometric topology}. By dropping the adjective \textit{geometric} I certainly mean no slight of general, set-theoretic, or algebraic topology.} a signpost, demarcating topology before Thurston, and topology after Thurston. This is evidenced not only in the fabulous results he proved, explained, and inspired, but even more so in how he taught us to do mathematics. Topology before Thurston was dominated by the general and the abstract, entrapped in the rarified heights that captured the mathematical world in general, and topology in particular, in the period from the 1930's until the 1970's. Topology after Thurston is dominated by the particular and the geometric, a throwback to the nineteenth-century, having much in common with the highly geometric landscape that inspired Felix Klein and Max Dehn, who walked around and within Riemann surfaces, knew them intimately, and understood them in their particularity. Thurston's vision gave a generation of topologists permission to get their collective hands dirty by examining in great depth specific structures on specific examples.

One of the organizing principles that lies behind Thurston's vision is that geometry informs topology, and that the non-Euclidean geometry of Lobachevski, Bolyai, and Beltrami in particular is systemic to the study of topology. Hyperbolic geometry\index{hyperbolic geometry} permeates topology after Thurston, and it is hyperbolic geometry that becomes the common thread of the present article. This will be seen in the interrelated studies presented here. All to varying degrees are due to the direct influence of Bill Thurston and his generalization of the earlier results of Koebe and Andre'ev. All involve hyperbolic geometry in some form or influence, and even further all illustrate how combinatorics encodes geometry, another of the principles that underlies Thurston's vision. To my mind, the proposition that \textit{combinatorics encodes geometry, which in turn informs topology} has become a fundamental guiding motif for topology after Thurston. I offer this article as a celebration of Bill Thurston's vision and his immense influence over our discipline.

\subsection*{An introductory overview.} The Koebe-Andre'ev-Thurston Theorem represents a rediscovery and broad generalization of a curiosity of Paul Koebe's from 1936, and has an interpretation that recovers a characterization of certain three-dimensional hyperbolic polyhedra due to E.M.\,Andre'ev in two papers from 1970. This theorem is the foundation stone of the discipline that has been dubbed as \textit{discrete conformal geometry},\index{discrete conformal geometry} which itself has been developed extensively by many mathematicians in many different directions over the last thirty years. Discrete conformal geometry in its purest form is geometry born of combinatorics, but it has theoretical and practical applications. In the theoretical realm, it produces a discrete analytic function theory that is faithful to its continuous cousin, a quantum theory of complex analysis from which the classical theory emerges in the limit of large scales. In the realm of applications, it has been developed in a variety of directions, for practical applications in areas as diverse as biomedical imaging and 3D print head guidance. This rather large body of work flows from simple insights that Thurston presented in his lecture at Purdue University in 1985 on how to use the most elementary case of his circle packing theorem to provide a practical algorithm for approximating the Riemann mapping from a proper, simply-connected planar domain to the unit disk. A personal accounting of this development can be found in the author's own review~\cite{pB09} of the bible of circle packing theory, Ken Stephenson's \textit{Introduction to Circle Packing:\,The Theory of Discrete Analytic Functions}~\cite{kS05}. 

A perusal of the Table of Contents at the beginning of the article will give the reader a clue as to where I am going in this survey. I primarily stick with the theoretical results for which there are fairly direct lines from the Koebe-Andre'ev-Thurston Theorem to those results. This means in particular that I almost totally ignore the really vast array of practical applications that circle packing has found, especially in the last two decades as discrete differential geometry has become of primary importance in so many applications among computer scientists and computational geometers. A survey of applications will have to wait as space constraints preclude a discussion that does justice to the topic.

\subsection*{Dedication and Appreciation.} This article is dedicated to the memory of Bill Thurston and his student Oded Schramm, and to an appreciation of Jim Cannon and Ken Stephenson. I have spoken already of Bill Thurston's legacy. Oded Schramm was one of the first to press Thurston's ideas on circle packings to a high level of development and application, and his great originality in approaching these problems has bequeathed to us a treasure trove of beautiful gems of mathematics. Most of Oded's work on circle packing and discrete geometry was accomplished in the decade of the nineteen-nineties. As Bill is a demarcation point in the history of topology, Oded is one in the history of probability theory. In the late nineties, Oded became interested in some classical open problems in probability theory generated by physicists, in percolation theory and in random planar triangulations in particular. Physicists had much theoretical and computational evidence for the veracity of their conjectures, but little mathematical proof, or even mathematical tools to approach their verifications. In Oded's hands these venerable conjectures and problems began to yield to mathematical proof, using ingenious tools developed or refined by Oded and his collaborators, chief among which are $\mathrm{SLE}_{\kappa}$, originally Stochastic-Loewner Evolution, now renamed as Schramm-Loewner Evolution, and UIPT's, or Uniform Infinite Planar Triangulations. For a wonderful biographical commentary on Oded's contributions to mathematics, see Steffen Rohde's article \textit{Oded Schramm:\,From Circle Packing to SLE} in \cite{SR10}.

The two individuals who have had the greatest impact on my mathematical work are Jim Cannon and Ken Stephenson, the one a mathematical hero of mine, the other my stalwart collaborator for three decades. Jim's work has influenced mine significantly, and I greatly admire his mathematical tastes and contributions. Pre-Thurston, Jim had made a name for himself in geometric topology in the flavor of Bing and Milnor, having solved the famous double suspension problem and having made seminal contributions to cell-like decomposition theory and the characterization of manifolds. In the beginning of the Thurston era, his influential paper \textit{The combinatorial structure of cocompact discrete hyperbolic groups}~\cite{jC84} anticipated many of the later developments of geometric group theory, presaging Gromov's thin triangle condition and, \`a la Dehn, the importance of negative curvature in solving the classical word and conjugacy problems of combinatorial group theory. He with Thurston invented automatic group theory and then Jim settled upon the conjecture that bears his name as the work that for three decades has consumed his attention. Ken has been a joy with whom to collaborate over the past three decades. He was inspired upon attending Thurston's Purdue lecture in 1985 to change his mathematical attentions from a successful career as a complex function theorist, to a geometer exploring this new idea of circle packing using both traditional mathematical proof and the power of computations for mathematical experimentation. I began my foray into Thurston-style geometry and topology by answering in~\cite{BS92} a question of Ken and Alan Beardon from one of the first research papers~\cite{BSt90} to appear on circle packings after Rodin and Sullivan's 1987 paper~\cite{RS87} confirming the conjecture of Thurston from the Purdue lecture. Ken and I are co-authors on a number of research articles and his down-to-earth approach to the understanding of mathematics has been a constant check on my tendency toward flights of fancy. I have learned from him how to tell a good story of a mathematical topic. For Ken's warm friendship and collaboration I am grateful.

\subsection*{Acknowledgements.} I thank Prof.~Athanase Papadopoulos for inviting me to write on a favorite theme of mine to honor Bill Thurston and his legacy. It has been a pleasure for me to review the impact derived from this one beautiful little gem of Thurston. I thank Ken Stephenson for permission to use the graphics of Figures 1, 2, 5, and 7, and John Bowers for generating the graphics for Figures 3, 4, 6, 8, 9, and 10. 

\section{The Koebe-Andre'ev-Thurston Theorem, Part I}

\subsection{Koebe uniformization and circle packing.} In the early years of the twentieth century, rigorous proofs of the Riemann Mapping Theorem and the more general Uniformization Theorem were given by such eminent mathematicians as Osgood, Carath\'eodory, Poincar\'e, and Koebe, and refinements and re-workings would continue to be made by others, even up to the present.\footnote{The author recommends rather highly the article \textit{On the history of the Riemann mapping theorem} \cite{jG94} by Jeremy Gray and the monograph \textit{Uniformization of Riemann Surfaces:\,Revisiting a Hundred-Year-Old Theorem}~\cite{S-G16}. These two works give insightful historical accountings of the discovery, articulation, understanding, and finally rigorous proofs of the Riemann Mapping Theorem and the Uniformization Theorem. The narratives are at once engaging and perceptive, illustrating wonderfully the fact that mathematics is generally a common endeavor of a community of folks rather than the singular achievement of an enlightened few.} The generalization of the Riemann Mapping Theorem to multiply-connected domains fell to the hands of Paul Koebe, who in 1920 in~\cite{pK20} proved that every finitely-connected domain in the Riemann sphere is conformally equivalent to a \textit{circle domain}, a connected open set all of whose complementary components are points or closed round disks. Of course for a $1$-connected, or simply-connected, domain this is nothing more than the Riemann Mapping Theorem. He proved also a rigidity result, that any conformal homeomorphism between any two circle domains with finitely many complementary components is in fact the restriction of a M\"obius transformation.\footnote{Beware! This is not true in general. Two domains with uncountably many complementary components may be conformally equivalent yet fail to be M\"obius equivalent.} Koebe's real goal was what is known by its German name as \textit{Koebe's Kreisnormierungsproblem} and by its English equivalent as \textit{Koebe's Uniformization Conjecture},\index{Koebe uniformization conjecture} which he posed in 1908.

\begin{KUC}[\cite{pK08}]
    Every domain in the Riemann sphere is conformally homeomorphic to a circle domain.
\end{KUC}

\noindent This of course includes those domains with infinitely many, whether countably or uncountably many, complementary components. The general Koebe Uniformization Conjecture remains open to this day. More on this later.

In a paper of 1936, Koebe obtained the following circle packing theorem as a limiting case of his uniformization theorem of 1920. This went unnoticed by the circle packing community until sometime in the early nineteen-nineties. 

\begin{KCPT}[\cite{pK36}]
Every oriented simplicial triangulation $K$ of the 2-sphere $\mathbb{S}^{2}$ determines a univalent circle packing $K(\mathcal{C})$ for $K$, unique up to M\"obius transformations of the sphere.
\end{KCPT}
Here the \textit{circle packing}\index{circle packing} $K(\mathcal{C})$ is a collection $\mathcal{C} = \{C_{v}: v\in V(K)\}$ of circles $C_{v}$ in the sphere $\mathbb{S}^{2}$ indexed by the vertex set $V(K)$ of $K$ such that $C_{u}$ and $C_{v}$ are tangent whenever $uv$ is an edge of $K$, and for which circles $C_{u}$, $C_{v}$, and $C_{w}$ bound a positively oriented interstice whenever $uvw$ is a positively oriented face of $K$. The circle packing is \textit{univalent} if there is a collection $\mathcal{D} = \{D_{v} : v\in V(K)\}$ of disks with $C_{v} = \partial D_{v}$ whose interiors are pairwise disjoint.\footnote{Without univalence, packings with branching would be allowed, where the sequence of circles tangent to a single circle $C$ may wrap around $C$ multiple times before closing up. See Section~\ref{Section:branched}.} Connecting the centers of the adjacent circles by appropriate great circular arcs then produces a geodesic triangulation of $\mathbb{S}^{2}$ isomorphic to $K$. \Cref{fig:Koebe} shows a  circle packing of the sphere determined by an abstract triangulation $K$, and the realization of $K$ as a geodesic triangulation. Of course the circle packings for a fixed $K$ are M\"obius equivalent, while the corresponding geodesic triangulations are not, simply because neither circle centers nor great circles are M\"obius-invariant. I will look at a proof of the Koebe Circle Packing Theorem later, but first I'll present Thurston's generalization.
\begin{figure}
\begin{subfigure}[b]{0.4\textwidth}
	\includegraphics[width=\textwidth]{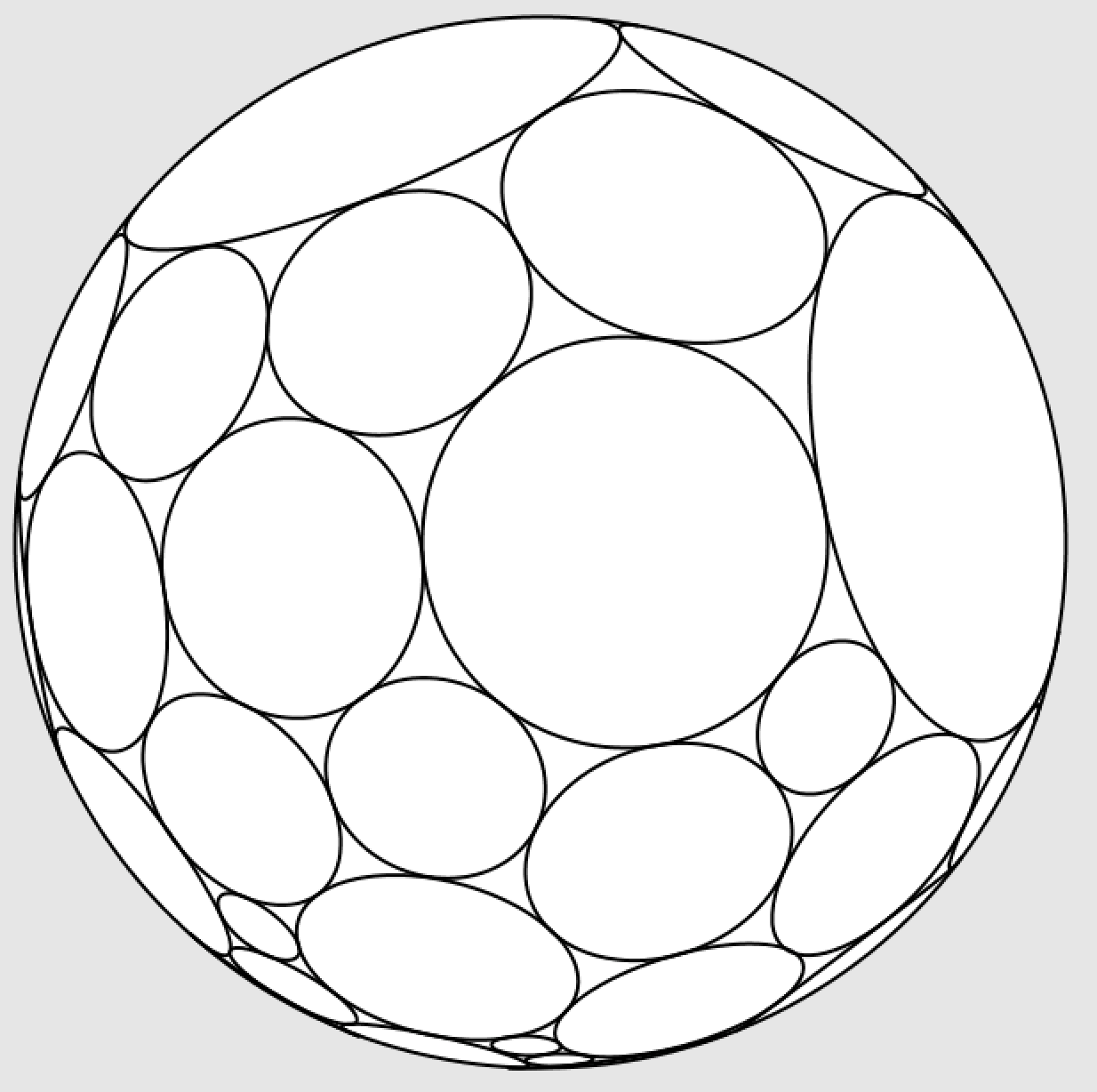}
\caption{The circle packing determined by a triangulation $K$ of $\mathbb{S}^{2}$.}\label{1.B}
\end{subfigure}
\quad
\begin{subfigure}[b]{0.4\textwidth}
	\includegraphics[width=\textwidth]{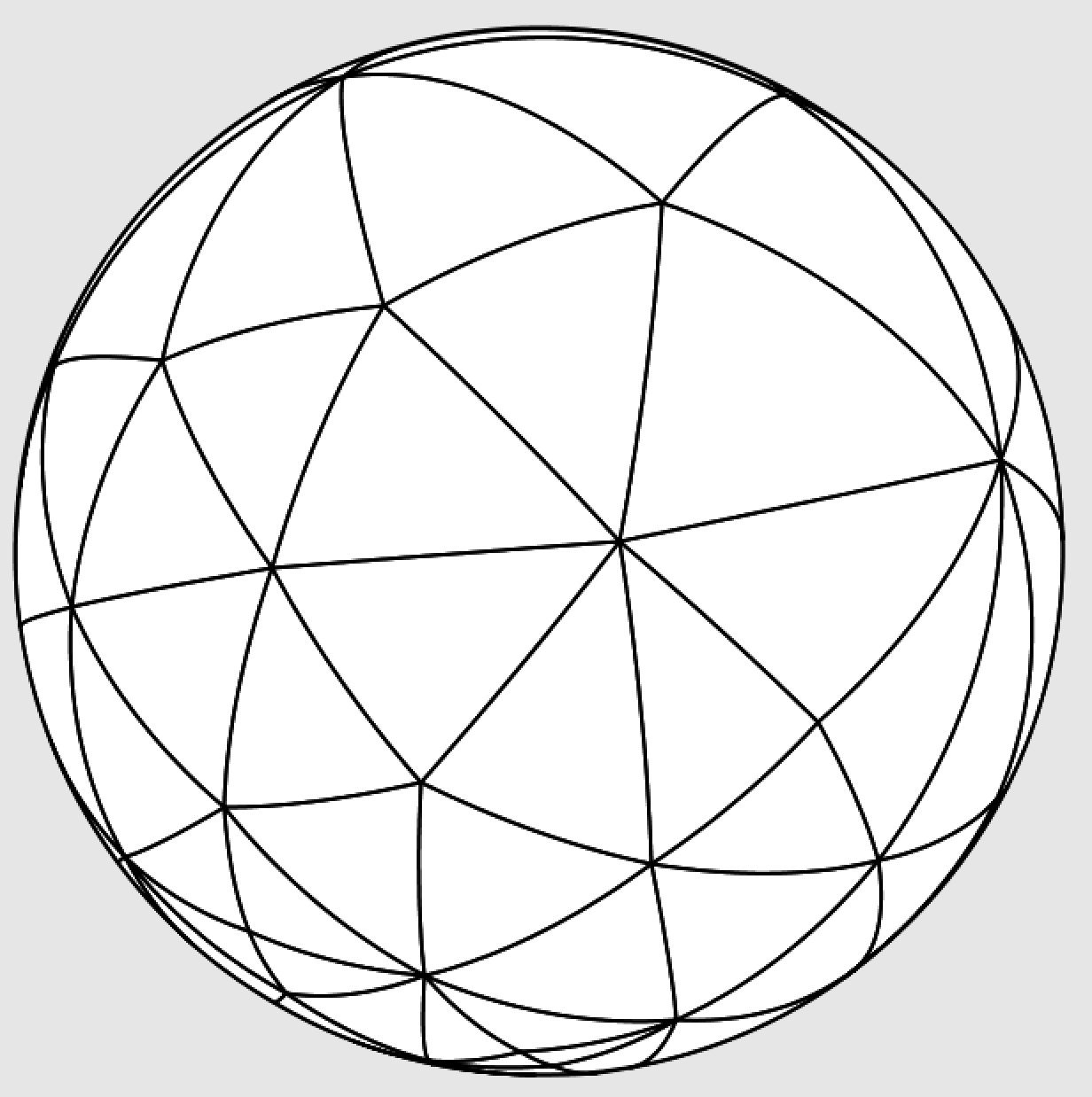}
\caption{The corresponding geodesic triangulation of $\mathbb{S}^{2}$.}\label{1.A}
\end{subfigure}
      \caption{An abstract triangulation $K$ of the $2$-sphere determines (A) a circle packing, which in turn determines a realization of $K$ as (B) a geodesic triangulation of the $2$-sphere.}\label{fig:Koebe}
\end{figure}

\subsection{Koebe-Andre'ev-Thurston, or KAT for short.}\index{Koebe-Andre'ev-Thurston Theorem} In his Princeton course of 1978-79, Thurston greatly generalized the Koebe Circle Packing Theorem, though at the time he was unaware of Koebe's result. He generalized in two ways, first by allowing adjacent circles to overlap and second by extending the theorem to arbitrary compact orientable surfaces. Thurston realized that his version of the theorem on the sphere $\mathbb{S}^{2}$ in fact encodes information about convex hyperbolic polyhedra, the connection of course through the fact that the sphere $\mathbb{S}^{2}$ serves as the space at infinity of the Beltrami-Klein and Poincar\'e ball versions of hyperbolic three-space $\mathbb{H}^{3}$ with circles on the sphere the ideal boundaries of hyperbolic planes in $\mathbb{H}^{3}$. These polyhedra had been characterized in two papers of Andre'ev from 1970, whose results can be interpreted in terms of the existence and uniqueness of the circle packings Thurston examined in his generalization of Koebe. Thurston's generalization to overlapping packings on the sphere is now known as the \textit{Koebe-Andre'ev-Thurston Theorem}, honoring its three principle protagonists.

\begin{KAT}
Let $K$ be an oriented simplicial triangulation of $\mathbb{S}^{2}$, different from the tetrahedral triangulation, and let $\Phi : E(K) \to [0, \pi/2]$ be a map assigning angle values to each edge of $K$. Assume that the following two conditions hold.
\begin{enumerate}
\item[(i)] If $e_{1}, e_{2}, e_{3}$ form a closed loop of edges from $K$ with $\sum_{i=1}^{3} \Phi(e_{i}) \geq \pi$, then $e_{1}$, $e_{2}$,  and $e_{3}$ form the boundary of a face of $K$.
\item[(ii)] If $e_{1}, e_{2}, e_{3}, e_{4}$ form a closed loop of edges from $K$ with $\sum_{i=1}^{4} \Phi(e_{i}) = 2\pi$, then $e_{1}$, $e_{2}$, $e_{3}$, and $e_{4}$ form the boundary of the union of two adjacent faces of $K$.
\end{enumerate}
Then there is a realization of $K$ as a geodesic triangulation of $\mathbb{S}^{2}$ and a family $\mathcal{C} = \{ C_{v} : v \in V(K)\}$ of circles centered at the vertices of the triangulation so that the two circles $C_{v}$ and $C_{w}$ meet at angle $\Phi(e)$ whenever $e=vw$ is an edge of $K$. The circle packing $\mathcal{C}$ is unique up to M\"obius transformations.
\end{KAT}

%{\color{red} Review Thurston's proof presented in GTTM }

Now I want to point out that exactly what is called the \textit{Koebe-Andre'ev-Thurston Theorem} is not at all settled. Some references use the term to mean the tangency case of the theorem ($\Phi \equiv 0$), which is nothing more than the Koebe Circle Packing Theorem, while others use the term to mean Thurston's full generalization of the theorem to arbitrary closed surfaces that is presented in Section~\ref{Section:FullThurston}. Exactly what Thurston proved in GTTM also often is misreported. In fact my introduction to this section is a bit of a misreporting, so let me take a little time to say exactly what Thurston does in Chapter 13 of GTTM. 

In terms of circle packings on the $2$-sphere, Thurston does not allow overlaps of adjacent circles, only tangencies. His version of the tangency case appears as Corollary 13.6.2 in Chapter 13 of GTTM, and appears as a corollary of Theorem 13.6.1, which he attributes to Andre'ev. This theorem concerns hyperbolic structures on orbifolds and, as it was Thurston who invented the notion of orbifold in his course at Princeton during 1976-77 as recorded in the footnote on page 13.5 of Chapter 13 of GTTM itself, this theorem is an interpretation of Andre'ev's in the context of orbifolds. Thurston does not give a proof of Theorem 13.6.1, but uses its result ensuring the existence of a hyperbolic structure on a suitable orbifold to prove Koebe's Theorem of 1936, Corollary 13.6.2. He does this by using the triangulation $K$ to define an associated polyhedron $P$ by cutting off vertices by planes that pass through midpoints of edges. He then uses the Andre'ev result to realize $P$ as a right-angled ideal polyhedron in $\mathbb{H}^{3}$. The faces of this polyhedron then lie in planes whose ideal circular boundaries are the circles of the desired tangency packing complemented by the orthogonal circles through three mutually adjacent points of tangency. He then invokes Mostow rigidity for uniqueness. 

It isn't until he presents Theorem 13.7.1 that Thurston allows for adjacent circles to overlap with angle between zero and $\pi/2$, and that only for surfaces other than the sphere, those surfaces with nonpositive Euler characteristic. Thurston proves this by assigning polyhedral metrics with curvature concentrated at the vertices $v_{1}, \dots ,v_{n}$ by assigning a radius $r_{i}$ at vertex $v_{i}$. Defining the mapping $c:\mathbb{R}^{n} \to \mathbb{R}^{n}$ that measures the curvature via $c(r)_{i} = 2\pi - (\text{the angle sum at vertex }v_{i})$, he then argues in nine pages that the origin $\mathbf{0}$ is in the image of $c$, which implies the desired result. It is the case that the version Thurston presents on the sphere, Corollary 13.6.2, is Koebe's result, and uses Andre'ev's ideas for the proof. It is only with this positive genus version, Theorem 13.7.1, that Thurston puts forth new geometric ideas, fertile enough to spawn an industry dedicated to understanding polyhedral metrics on surfaces and their induced circle packings.

Thurston's approach to circle packing is rather entwined with his overall concern, that of hyperbolic structures on three-dimensional manifolds and orbifolds. Since this work of the nineteen-seventies, Thurston's circle packing results have spawned a rather extensive theory that is more combinatorial and geometric, and related more to classical complex function theory and Riemann surfaces, and less to three-manifolds. It is related intimately to hyperbolic polyhedra and their generalizations, this the subject of Section~\ref{Section:Cage}, and has found several scientific applications. In the hands of folks like Ken Stephenson and his students and collaborators, it has spawned a discrete theory of complex analytic functions, laid out ever so elegantly in Stephenson's \textit{Introduction to Circle Packing}~\cite{kS05}. It has yielded beautiful results on, for example, discrete minimal surfaces in the hands of the Berlin school of Bobenko, Hoffman, Springborn, Pinkall, and Polthier; see for example~\cite{BHS06} and ~\cite{PP12}. Though the theory now is rather mature, it continues to interact in new and interesting ways with new areas, for instance lying in the background in conformal tilings~\cite{BS14a,BS14b}, or in the foreground with its interaction with the classical rigidity theory of Euclidean frameworks~\cite{BBP18}. There is an immense literature here, and so much of it owes a great debt of gratitude to the insights of Bill Thurston.

\subsection{A proof of the Koebe Circle Packing Theorem.}
Rather than proving the whole of KAT I, I will address the case where $\Phi$ is identically zero and prove Koebe's result. The proof presented here can be modified to give a complete proof of KAT I, which is done in~\cite{BS96} in proving a generalization.\footnote{See Section~\ref{Section:branched}.} Koebe's original proof of his namesake theorem uses a limiting process on circle domains and classical analytic arguments on convergence of analytic families of maps, very much in the flavor of what we now teach as classical techniques in our complex analysis courses. There are now many proofs of the Koebe Circle Packing Theorem. To name a few, besides Koebe's, there is Thurston's in GTTM already outlined above based on Andre'ev's results on hyperbolic polyhedra, Al Marden and Burt Rodin's using piecewise flat polyhedral metrics, Alan Beardon and Ken Stephenson's~\cite{BSt90} that adapts the classical Perron method for constructing harmonic maps as an upper envelope of subharmonic maps, Colin de Verdi\`ere's~\cite{CdV91a} based on a variational principle, Igor Rivins's hidden in his paper~\cite{Rivin:1994kg} on Euclidean structures on triangulated surfaces, the author's~\cite{pB93} that turns the Beardon-Stephenson proof upside down to address packings on punctured surfaces, and Alexandre Bobenko and Boris Springborn's~\cite{BoSp04} that uses a minimal principle on integrable systems. Here I present a geometric and combinatorial proof where hyperbolic geometry is the crucial ingredient. The proof is a twist on the Perron method used by Beardon and Stephenson in~\cite{BSt90} and is specialized from a more general version that applies to arbitrary surfaces of finite conformal type that appears in~\cite{pB93}. We will see that it has the advantage of generalizing in interesting ways.

\begin{proof}[Proof of the Koebe Circle Packing Theorem]
By removing one vertex $v_{0}$ from $K$ and its adjacent edges and faces, one obtains a triangulation $T$ of a closed disk. Place a piecewise hyperbolic metric on $T$ as follows. For any positive function $r:V(T) \to (0, \infty)$, let $|T(r)|$ be the metric space obtained by identifying the face $v_{1}v_{2}v_{3}$ of $T$ with the hyperbolic triangle of side lengths $r(v_{i}) + r(v_{j})$ for $i\neq j \in \{1,2,3\}$. This places a piecewise hyperbolic metric on $T$ with cone-like singularities at the interior vertices. This structure often now is called a piecewise hyperbolic \textit{polyhedral metric}, and the function $r$ is called variously a \textit{radius vector} or \textit{label}. For any vertex $v$, one can measure the angle sum $\theta_{r}(v)$ of the angles at $v$ in all the faces incident to $v$. I will say that $r$ is a \textit{superpacking label} for $T$ if the angle sums of all interior vertices are at most $2\pi$, and a \textit{packing label}\footnote{For emphasis one sometimes calls this a \textit{hyperbolic packing label} to distinguish it from \textit{flat} or \textit{Euclidean packing labels} that also find their use in this discipline.} if all are equal to $2\pi$.

Now modify this a little by allowing $r$ to take infinite values at the boundary vertices. This causes some ambiguity only if there is a separating edge in $T$ that disconnects $T$ when removed. This will be taken care of later, so for now assume no separating edge exists. The goal is to find a packing label $\mathfrak{r}$ with $\mathfrak{r}(w) = \infty$ whenever $w$ is a boundary vertex. Assuming that such an $\mathfrak{r}$ exists, we may glue on hyperbolic half planes along the faces with two boundary vertices to give a complete hyperbolic metric on a topological disk, which must be isometric to the hyperbolic plane. This implies that the metric space $|T(\mathfrak{r})|$ is isometric to an ideal polygon in the hyperbolic plane whose sides are hyperbolic lines connecting adjacent ideal vertices that correspond to the boundary vertices of $T$. Now placing hyperbolic circles of radii $\mathfrak{r}(v)$ centered at interior vertices $v$ and horocycles centered at ideal vertices determined by the boundary vertices gives a univalent circle packing of the hyperbolic plane realized as, say, the Poincar\'e disk, the unit disk $\mathbb{D}$ in the complex plane with Poincar\'e metric $ds = 2 |dz| / (1-|z|^{2})$. The boundary circles are horocycles in the hyperbolic metric on the disk and are therefore circles internally tangent to the unit circle. Stereographic projection to the sphere $\mathbb{S}^{2}$ and addition of the equator as the circle corresponding to the vertex $v_{0}$ removed initially produces a univalent circle packing of the sphere in the pattern of $K$ as desired. Uniqueness follows from uniqueness of the packing label $\mathfrak{r}$ with infinite boundary values, which follows from the construction of $\mathfrak{r}$ explained next.

Define the function $\mathfrak{r}$ as
\begin{equation}\label{EQ:rfrakdef}
\mathfrak{r}(v) = \inf \,\{r(v) : r\in \mathfrak{R}\}
\end{equation}
\noindent where
\begin{equation*}
\mathfrak{R} = \{r: V(T) \to (0, \infty]: r \text{ is a superpacking label for $T$ with infinite boundary values} \}.
\end{equation*}
\noindent The claim is that this is the desired packing label. The first observation is that $\mathfrak{R} \neq \emptyset$ so that we are not taking the infimum of the empty set. This is because one may choose label values so large on the interior vertices that all of the faces become hyperbolic triangles whose interior angles are no more than $2\pi / d$, where $d$ is the maximum degree of all the vertices of $T$. It follows that $\mathfrak{r}$ is a non-negative function with infinite boundary values. To verify that $\mathfrak{r}$ is a packing label, I show that
\begin{enumerate}
\item[(i)] $\mathfrak{r}$ cannot take a zero value on any interior vertex, which then implies that $\mathfrak{r} \in \mathfrak{R}$,
\item[] and,
\item[(ii)] the angle sum at any interior vertex is $2\pi$, meaning further that $\mathfrak{r}$ is a packing label.
\end{enumerate}
\noindent We need two preliminary observations.
\begin{enumerate}
\item[(iii)] \textit{Hyperbolic area is bounded away from zero.} The hyperbolic area of the singular hyperbolic surface $|T(r)|$ is $\geq \pi$ for all superpacking labels $r\in \mathfrak{R}$.
\item[(iv)] \textit{Monotonicity of angles.} For a face $f=v_{0}v_{1}v_{2}$ of $T$, let $\alpha_{r}(i)$, for $i=0,1,2$, be the angle that the label $r\in \mathfrak{R}$ gives to $f$ at vertex $v_{i}$. Then $\alpha_{r}(0) \uparrow \pi$, $\alpha_{r}(1)\downarrow 0$, and $\alpha_{r}(2) \downarrow 0$ monotonically as $r(v_{0}) \downarrow 0$ when $r(v_{1})$ and $r(v_{2})$ are held fixed.
\end{enumerate}

In calculating the hyperbolic area to confirm item (iii), let $V(T)$ and $F(T)$ be the respective vertex and face sets of $T$ of respective cardinalities $\mathtt{V}$ and $\mathtt{F}$. The sum of the angles of a face when given its metric by $r$ is denoted $\alpha_{r} (f)$ so that its hyperbolic area is $A_{r}(f) = \pi - \alpha_{r}(f)$. Finally, with $\mathtt{V}_{\mathrm{int}}$ and $\mathtt{V}_{\mathrm{bd}}$ denoting the numbers of interior and boundary vertices of $T$ so that $\mathtt{V} = \mathtt{V}_{\mathrm{int}} + \mathtt{V}_{\mathrm{bd}}$, one has
\begin{equation}\label{EQ:hyp-area}
\textrm{hyp-area}\left(|T(r)|\right) = \pi \,\mathtt{F} - \sum_{f\in F(T)} \alpha_{r}(f) 
    = \pi \,\mathtt{F} - \sum_{v\in V(T)} \theta_{r}(v) \geq \pi (\mathtt{F} -2\mathtt{V}_{\mathrm{int}}),
\end{equation}
\noindent since $\theta_{r}(v) \leq 2\pi$ at interior vertices and $\theta_{r}(v) = 0$ at boundary ones. An Euler characteristic exercise then shows that $\mathtt{F} -2\mathtt{V}_{\mathrm{int}} = \mathtt{V}_{\mathrm{bd}} - 2 \geq 1$, the inequality holding since $K$ is simplicial. It follows that every superpacking label with infinite boundary values produces a metric on $T$ with hyperbolic area at least $\pi$. Item (iv) is almost obvious from drawing examples, but can be given a rigorous proof using the hyperbolic law of cosines from hyperbolic trigonometry.

I now address item (i). First the claim is that the label $\mathfrak{r}$ cannot be identically zero on the set of interior vertices. Indeed, if $\mathfrak{r}$ is identically zero, one may choose a sequence of superpacking labels $r_{i}$ with infinite boundary values such that, for each interior vertex $v$, $r_{i}(v) \to 0$ as $i \to \infty$. This latter fact in turn follows from the observation that the minimum label $\min \{r_{1}, r_{2}\}$ is in $\mathfrak{R}$ whenever $r_{1}$ and $r_{2}$ are labels in $\mathfrak{R}$, which in turn is a consequence of the monotonicity of angles (iv). Recall that we are under the assumption that there are no separating edges so that at least one vertex of any face $f$ of $T$ is interior. Any such interior vertex has $r_{i}$-values converging to zero, and any boundary one is fixed at infinity, and with this it is easy to see that the hyperbolic area $A_{r_{i}}(f) \to 0$ as $i \to \infty$. But this implies that the hyperbolic area of $|T(r_{i})|$ converges to zero as $i\to \infty$, which contradicts item (iii). 

Now could it be that $\mathfrak{r}$ takes a zero value at some interior vertex, but not at all? The argument that this in fact does not happen is a generalization of what I have argued thus far. I will but give an indication of how it goes, referring the reader to \cite{pB93} for details. Let $T'$ be the subcomplex of $T$ determined by those faces of $T$ that have a vertex in $\mathfrak{r}^{-1}(0)$. An argument using Euler characteristic similar to that already given implies that the hyperbolic area of $|T'(r)|$ is positive and bounded away from zero for every superpacking label $r$ with fixed non-negative boundary values. But an argument as in the preceding paragraph shows that the hyperbolic areas of $|T'(r_{i})|$ converge to zero for a sequence of superpacking labels with fixed boundary values and interior vertex values converging to zero. This contradiction implies that $\mathfrak{r}$ is a positive function on the interior vertex set, and continuity of angles of a triangle with respect to edge lengths implies that $\theta_{\mathfrak{r}}(v) = \lim_{i\to \infty}\theta_{r_{i}}(v) \leq 2\pi$ at any interior vertex, since $\theta_{r_{i}}(v) \leq 2 \pi$ for all $i$. This shows that $
\mathfrak{r} \in \mathfrak{R}$ and completes the verification of item (i).

Item (ii) follows quickly from item (iv). Indeed, if (ii) fails, then there is an interior vertex $v$ of $T$ such that $\theta_{\mathfrak{r}}(v) < 2\pi$. By the monotonicity properties (iv), varying $\mathfrak{r}$ by slightly decreasing its value at $v$ without changing any other values increases $\theta_{\mathfrak{r}}(v)$ while decreasing $\theta_{\mathfrak{r}}(w)$ for any vertex $w$ incident to $v$. By making that decrease of $\mathfrak{r}(v)$ small enough to keep the angle sum at $v$ below $2\pi$, we obtain a superpacking label $r$ with infinite boundary values that satisfies $r(v) < \mathfrak{r}(v)$, which contradicts the definition of $\mathfrak{r}$ in Equation~\ref{EQ:rfrakdef}.

At this point I have shown that $\mathfrak{r}$ is a packing label with infinite boundary values, and I now claim that it is the only one. Suppose there is a packing label $r$ in $\mathfrak{R}$ that differs from the infimum label $\mathfrak{r}$ defined in Equation~\ref{EQ:rfrakdef}. Then $\mathfrak{r} (v) \leq r(v)$ for all vertices $v$, but there must be some interior vertex $w$ with $\mathfrak{r}(w) < r(w)$. This implies that the hyperbolic area of the surface $|T(\mathfrak{r})|$ is strictly less than that of $|T(r)|$. But this is impossible since $\mathfrak{r}$ and $r$ are packing labels with infinite boundary values, and as argued above, both $|T(\mathfrak{r})|$ and $|T(r)|$ are ideal hyperbolic polygons with $\mathtt{V}_{\mathrm{Bd}}$ sides. An easy exercise shows that the hyperbolic area of any such hyperbolic polygon is equal to $(\mathtt{V}_{\mathrm{Bd}} -2)\pi$. 

This completes the proof modulo the assumption that $T$ has no separating edge. This is handled by induction on the number of such edges. If there is one separating edge $uv$, cut $T$ into $T_{1}$ and $T_{2}$ along that edge, circle pack each in the unit disk with horocyclic boundary circles, and then using M\"obius transformations, place the $T_{1}$ packing in the upper half disk with the horocycles for $u$ and $v$ circles tangent at the origin and centered on the real axis, and place the $T_{2}$ packing in the lower half of the disk with those same horocylic circles for $u$ and $v$. This is possible since $T$ is oriented, and this gives an appropriate packing label of $T$ with infinite boundary values.\end{proof}

\subsection{Maximal packings and the boundary value problem.} This proof actually proves the following extremely useful fact, which Beardon and Stephenson~\cite{BSt90} exploited to give the first extension of the Koebe Circle Packing Theorem to infinite packings of the disk and the plane. The infinite theory is presented in Section~\ref{Section:InfinitePackings}.

\begin{MDPT}\label{Theorem:MDPT}
Every oriented simplicial triangulation $T$ of a closed disk determines a univalent circle packing $T(\mathcal{C})$ for $T$ in the unit disk $\mathbb{D}$ in the complex plane $\mathbb{C}$, unique up to M\"obius transformations of the disk, with the circles corresponding to boundary vertices of $T$ internally tangent to the unit circle boundary $\partial \mathbb{D} = \mathbb{S}^{1}$. Moreover, when given its canonical hyperbolic metric making $\mathbb{D}$ into the Poincar\'e disk model of the hyperbolic plane $\mathbb{H}^{2}$, the circle radii of the packing are uniquely determined by $T$.  
\end{MDPT}

The circle packing guaranteed by this theorem is called the \textit{maximal packing} for $T$. This theorem is in fact a special case of the more general result of Beardon and Stephenson~\cite{BSt91a} that solves the discrete version of the classical Dirichlet boundary value problem of harmonic analysis. In that paper, the authors also prove a discrete version of the classical Schwarz-Pick Lemma of complex analysis. These two theorems finish up the present section.

\begin{DBVT}[Beardon and Stephenson~\cite{BSt91a}]
Let $T$ be an oriented simplicial triangulation of a closed disk and $f:V_{\mathrm{Bd}}(T) \to (0,\infty]$ a function assigning positive or infinite values to the boundary vertices. Then there exists a unique hyperbolic packing label $\mathfrak{r}: V(T) \to (0,\infty]$ extending $f$. The resulting circle packing $T(\mathcal{C}_{\mathfrak{r}})$ of the unit disk $\mathbb{D}$ is unique up to M\"obius transformations of $\mathbb{D}$.
\end{DBVT}

\begin{proof}
The proof is a straightforward modification of that of the Koebe Circle Packing Theorem already presented. Again $\mathfrak{r} = \inf \mathfrak{R}$ is the desired packing label, provided that
\begin{equation*}
\mathfrak{R} = \{r: r \text{ is a superpacking label for $T$ with }r(w) \geq f(w) \text{ when } w\in V_{\mathrm{Bd}}(T) \}.
\end{equation*}
Of course, $f \equiv \infty$ gives the maximal packing of the preceding theorem.
\end{proof}
This proof is a modification of the Beardon-Stephenson proof, which uses \textit{subpacking} rather than superpacking labels. In a subpacking label, the interior angle sums are greater than or equal to $2\pi$ and one obtains the packing label as an upper envelope of subpacking labels, with the packing label given by $\mathfrak{r} = \sup \mathfrak{R}'$ where $\mathfrak{R}'$ is the set of subpacking labels with boundary values given by $f$. The advantage of approaching the desired packing label $\mathfrak{r}$ from above using superpackings ($\inf \mathfrak{R}$) rather than from below using subpackings ($\sup \mathfrak{R}'$) is that this \textit{upper Perron method} readily generalizes to include cusp type singularities and cone type singularities at interior vertices.\footnote{Another not insignificant advantage is that it is easy to show that $\mathfrak{R} \neq \emptyset$ while proving that $\mathfrak{R}' \neq \emptyset$ generally is difficult.} This is presented in Section~\ref{Section:Singularities}.

A word of warning here. When the boundary values are allowed to be finite, the resulting packing, though locally univalent, may not be globally univalent. This means that the disks bounded by the circles of the packing may overlap non-trivially, though ones neighboring the same interior vertex never do; this is the meaning of \textit{locally univalent}. \Cref{fig:DBVP} shows a locally univalent packing that is not globally univalent.
\begin{figure}
\includegraphics[width=3.5in]{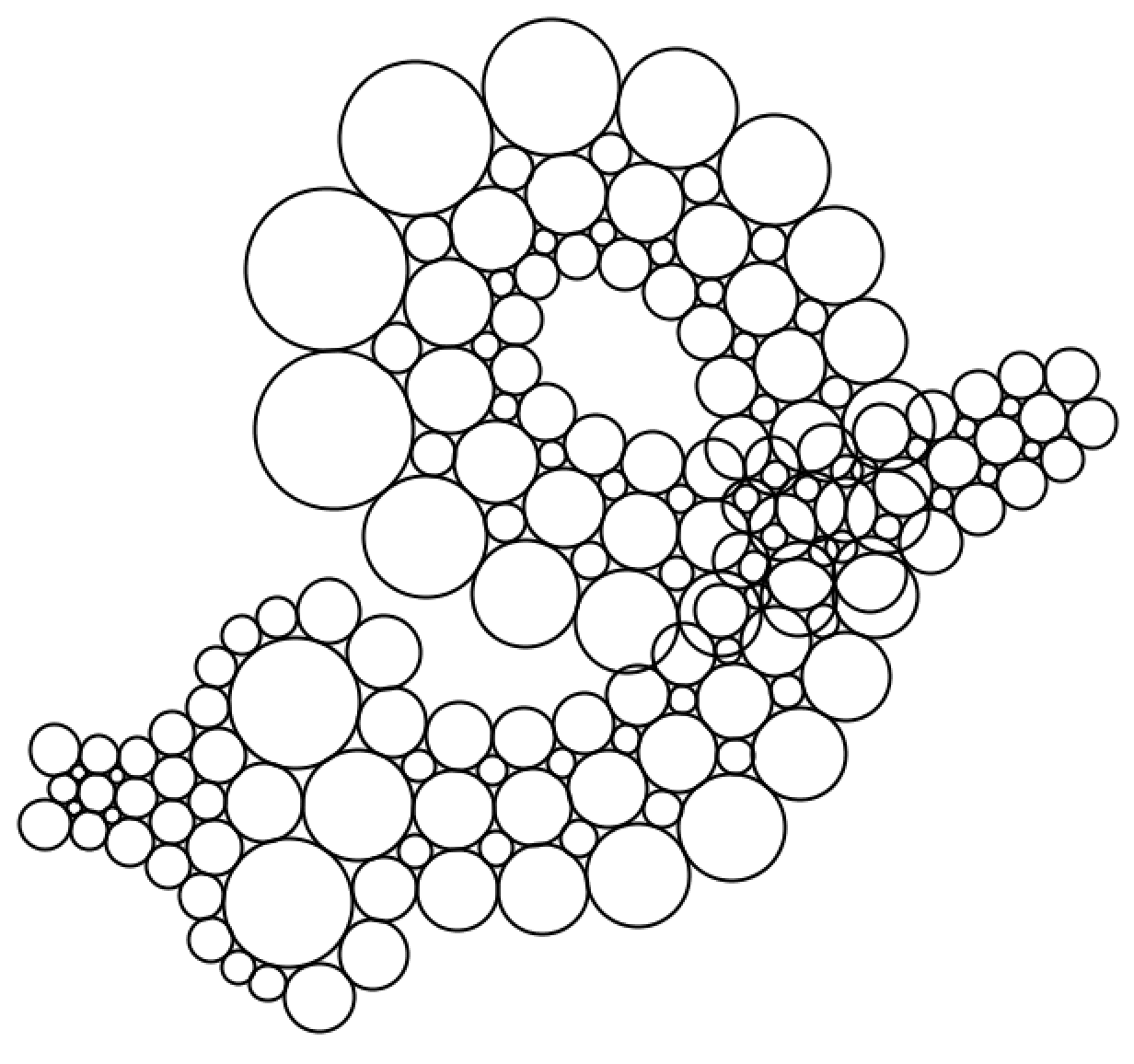}
\caption{A locally univalent circle packing that is not globally univalent.}
    \label{fig:DBVP}
\end{figure}

The second theorem of Beardon and Stephenson follows partly from the proof of the first (item (i)), and the rest of the theorem follows from a careful analysis of paths and angles in piecewise hyperbolic surfaces. The details of course appear in~\cite{BSt91a}. The theorem I state here is the generalization of what the reference~\cite{BSt91a} calls the Discrete Schwarz-Pick Lemma, which in turn is obtained by setting the boundary values of $r$ identically to infinity.
\begin{DSPL}[Beardon and Stephenson~\cite{BSt91a}]
Let $r, r' ; V(T) \to (0,\infty]$ be packing labels for the oriented simplicial triangulation $T$ of a closed disk. Suppose that $r(w) \geq r'(w)$ at every boundary vertex $w\in V_{\mathrm{Bd}}(T)$. Then
\begin{enumerate}
    \item[(i)] $r \geq r'$; i.e., $r(v) \geq r'(v)$ at every vertex $v$ of $T${\em ;}
    \item[(ii)] $\rho_{r}(u,v) \geq \rho_{r'} (u,v)$ for any two vertices $u$ and $v$, where $\rho_{r}$ is the distance function on the metric surface $|T(r)|$, and similarly for $\rho_{r'}${\em ;}
    \item[(iii)] $A_{r}(f) \geq A_{r'}(f)$ for any face $f$ of $T$. {\em (}Recall that $A_{r}(f)$ is the hyperbolic area of the face $f$.{\em )}
\end{enumerate}
Moreover, if a single instance of finite equality occurs at an interior vertex in {\em (i)}, or at vertices $u$ and $v$ at least one of which is interior in {\em (ii)}, or at any face in {\em (iii)}, then $r=r'$.
\end{DSPL}

\section{The Koebe-Andre'ev-Thurston Theorem, Part II}\index{Koebe-Andre'ev-Thurston Theorem}
\subsection{Circle packings of compact surfaces.} 
Thurston's other avenue of generalization of Koebe, indeed the more far-reaching one, is his extension of KAT to arbitrary orientable closed surfaces. Here there are striking illustrations of how purely combinatorial information encodes precise geometry. I will start with Thurston's tangency case of packings before presenting his version with overlaps.

\begin{Theorem}\label{Theorem:SurfacesPack}
Let $K$ be an oriented simplicial triangulation of a closed surface $S_{g}$ of positive genus. Then there is a metric of constant curvature, unique up to scaling, on $S_{g}$ that supports a univalent, tangency circle packing $\mathcal{C} = \{ C_{v}: v\in V(K)\}$ modeled on $K$. In particular, $C_{u}$ and $C_{v}$ are tangent whenever $uv$ is an edge of $K$. The packing $\mathcal{C}$ is unique up to isometries of $S_{g}$ in this metric when $g\geq 2$, and up to scaling when $g=1$. Connecting the centers of adjacent circles by geodesic shortest paths produces a geodesic triangulation of the surface in the pattern of $K$. The metric is locally Euclidean when $g=1$ and locally hyperbolic otherwise.
\end{Theorem}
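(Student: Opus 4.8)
The plan is to run a discrete prescribed-curvature argument on polyhedral metrics built directly from $K$, choosing the background geometry by the sign of $\chi(S_g) = 2 - 2g$, and to extract the packing exactly as in the proof of the Koebe Circle Packing Theorem above. For $g \geq 2$ I would work with piecewise hyperbolic polyhedral metrics: for a positive label $r : V(K) \to (0,\infty)$ let $|K(r)|$ be the surface obtained by realizing each face $uvw$ as the hyperbolic triangle with side lengths $r(u)+r(v)$, $r(v)+r(w)$, $r(w)+r(u)$ and gluing along shared edges. For $g = 1$ I would do the same with Euclidean triangles, noting that a Euclidean label carries the harmless ambiguity $r \mapsto \lambda r$, which merely rescales the metric and is what the phrase ``unique up to scaling'' records. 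In either case each vertex $v$ acquires a cone angle $\theta_r(v)$, and the goal is a \emph{packing label}, one with $\theta_r(v) = 2\pi$ for all $v$: then the cone points disappear, $|K(r)|$ is a smooth metric on $S_g$ of constant curvature $-1$ (resp.\ $0$), the circle of radius $r(v)$ about each vertex gives the tangency packing modelled on $K$, the edge-geodesics between centres recover $K$ as a geodesic triangulation, and univalence follows because the developing map of $|K(r)|$ is a global isometry onto the model space. Which curvature occurs is forced by discrete Gauss--Bonnet, $\sum_v\bigl(2\pi - \theta_r(v)\bigr) = 2\pi\chi(S_g)$: a Euclidean packing label needs $\chi = 0$ and a spherical one $\chi > 0$, so hyperbolic is the only option for $g\geq 2$ and Euclidean the only option for $g = 1$, which is exactly the last sentence of the theorem.

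Existence I would get by the upper-Perron method of the Koebe proof, now with no boundary vertices. For $g\geq 2$ put $\mathfrak{R} = \{ r : \theta_r(v) \leq 2\pi \text{ for all } v\}$ and $\mathfrak{r} = \inf\mathfrak{R}$. That $\mathfrak{R}\neq\emptyset$ follows by taking $r$ large and constant, so that all faces are nearly ideal hyperbolic triangles with tiny angles. The area identity $\textrm{hyp-area}(|K(r)|) = \pi\mathtt{F} - \sum_v \theta_r(v) \geq \pi(\mathtt{F} - 2\mathtt{V})$, together with the Euler relation $\mathtt{F} - 2\mathtt{V} = 4(g-1) > 0$, gives a uniform positive lower bound on hyperbolic area; combined with the monotonicity of angles (item (iv) above) and the fact that a hyperbolic triangle with side lengths $r(u)+r(v),\dots$ degenerates to zero area as any one label tends to $0$, this is what I would use to rule out $\mathfrak{r}$ vanishing at any vertex, so that $\mathfrak{r}\in\mathfrak{R}$. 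Then, as in item (ii) of the Koebe proof, if $\theta_{\mathfrak{r}}(v) < 2\pi$ at some $v$ one decreases $\mathfrak{r}(v)$ slightly, raising $\theta$ at $v$ and lowering it at neighbours, producing a superpacking label strictly below $\mathfrak{r}$ and contradicting the infimum; hence $\mathfrak{r}$ is a packing label. For $g = 1$ I would normalize the scale (say $\sum_v r(v) = 1$) and run the same scheme with Euclidean triangles and Euclidean areas, the automatic identity $\sum_v(2\pi - \theta_r(v)) = 0$ keeping the bookkeeping consistent. An alternative, closer to Thurston's own route in GTTM, is to show the curvature map $r \mapsto \bigl(2\pi - \theta_r(v)\bigr)_{v}$ is a proper local diffeomorphism onto the appropriate component of its target --- local diffeomorphism from the diagonal dominance of its Jacobian, properness from analysing the limits in which some labels tend to $0$ or $\infty$ --- and therefore surjective.

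For uniqueness I would argue at the level of labels. If $r$ and $r'$ are packing labels (with $\sum r = \sum r'$ in the flat case), look at a vertex maximizing $r(v) - r'(v)$; the monotonicity of angles forces $\theta_r(v) < \theta_{r'}(v)$ there unless $r$ and $r'$ agree at $v$ and all its neighbours, and propagating this across the connected $1$-skeleton gives $r \equiv r'$ (respectively $r' = \lambda r$). Equivalently one invokes the strict convexity of Colin de Verdi\`ere's functional whose critical points are exactly the packing labels. So the constant-curvature metric carrying a tangency packing modelled on $K$ is unique, absolutely when $g\geq 2$ and up to scaling when $g=1$, and any two realizations differ by an isometry of $S_g$ (together with a scaling when $g = 1$). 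I expect the real obstacle to be the existence half, specifically the ``boundary behaviour'' step: excluding degeneration of the extremal label --- its vanishing at some vertices, or, in the flat case, its escaping to infinity --- which in the closed-surface setting replaces the boundary-vertex bookkeeping available in the disk case and is precisely the delicate analysis Thurston carried out over several pages in GTTM. The cleanest handle I know is a refined Euler-characteristic area estimate on the subcomplex spanned by the degenerating vertices, as in the punctured-surface treatment of \cite{pB93}, combined with the angle-monotonicity lemma.
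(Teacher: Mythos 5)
Your $g\geq 2$ argument is essentially the paper's: the same upper-Perron scheme with hyperbolic superpacking labels $\mathfrak{R}$, the same area identity $\textrm{hyp-area}(|K(r)|) = \pi\,\mathtt{F} - \sum_{v}\theta_{r}(v) \geq (\mathtt{F}-2\mathtt{V})\pi = (4g-4)\pi$, non-emptiness by taking large labels, and the same two-step verification that $\mathfrak{r}=\inf\mathfrak{R}$ is positive and has all angle sums equal to $2\pi$. That half is sound and matches the paper.

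The gap is in the torus case. You propose to ``run the same scheme with Euclidean triangles and Euclidean areas,'' but the scheme degenerates there for exactly the reason hidden in the ``automatic identity'' you cite: every Euclidean triangle has angle sum $\pi$, so every Euclidean label satisfies $\sum_{v}\theta_{r}(v)=\pi\,\mathtt{F}=2\pi\,\mathtt{V}$, and hence the set of Euclidean superpacking labels \emph{coincides} with the set of Euclidean packing labels. The Perron infimum therefore buys nothing --- proving that set non-empty is the entire existence problem --- and the non-emptiness trick from the hyperbolic case (take labels large so all angles are tiny) is unavailable because Euclidean angles are scale-invariant. The paper's route around this is to stay hyperbolic: for $g=1$ one has $\mathtt{F}-2\mathtt{V}=0$, so hyperbolic area equals total angle shortage and $\inf\mathfrak{R}\equiv 0$; one then normalizes, $r^{\dag}=r/r(v^{\dag})$, shows $\mathfrak{r}^{\dag}=\inf\mathfrak{R}^{\dag}$ is positive, and uses the fact that Euclidean geometry is the small-scale limit of hyperbolic geometry to conclude that the normalized limit is a nonsingular flat packing label. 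Your fallback via properness of the curvature map (or Colin de Verdi\`ere's functional) is a legitimate alternative, but as written it is only gestured at, and the properness analysis is precisely where the work lies. Finally, your uniqueness sketch via a maximum principle on $r-r'$ needs an extra lemma (that adding a common constant to all labels of a hyperbolic face strictly decreases its angles); the paper instead gets uniqueness from strict monotonicity of area together with Gauss--Bonnet, which pins the total area of every packing label at $(4g-4)\pi$.
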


Just in case the reader blinked and missed it, I aim to emphasize the extent to which combinatorics determines geometry in this theorem. The simplicial complex $K$ provides purely combinatorial data with topological overtones. Yet hidden inside of the combinatorics is precise geometry. For example in the hyperbolic case where $g > 1$, among the uncountably many possible pairwise distinct hyperbolic metrics of constant curvature $-1$ as tabulated in the $(6g-6$)-dimensional moduli space $\mathcal{M}(S_{g}) \cong \mathbb{R}^{6g-6}$, the complex $K$ chooses exactly one of these metrics, and in that metric, determines a univalent circle packing unique up to isometry! For none of the other metrics that $S_{g}$ supports is there a univalent tangency packing of circles in the pattern of $K$! Since there are only countably many pairwise distinct simplicial triangulations of the fixed surface $S_{g}$, only countably many of the metrics parameterized by $\mathcal{M}(S_{g})$ support any univalent tangency packing at all, though the set of metrics that do support such circle packings does form a dense subset of the moduli space. 

I present a proof of Theorem~\ref{Theorem:SurfacesPack} based on the upper Perron method used to prove the Koebe Circle Packing Theorem.
\begin{proof}
	Let $\mathfrak{R} = \{ r: V(K) \to (0, \infty): \theta_{r} (v) \leq 2\pi \text{ for all } v\in V(K)\}$, the set of superpacking labels for $K$. Here again, exactly as in the proof of the Koebe Circle Packing Theorem, the label $r$ determines a hyperbolic polyhedral metric surface $|K(r)|$. A unique packing label for which the angle sum at every vertex is equal to $2\pi$ would give all the claims of the theorem in the hyperbolic case. My claim is that when $g \geq 2$, the function $\mathfrak{r} = \inf \mathfrak{R}$ is the unique packing label for $K$, and when $g=1$, then $\mathfrak{r} = \inf \mathfrak{R}$ is identically zero, but provides a way to place a flat polyhedral metric on $K$ that meets the packing condition. 
	
Exactly the calculation of Inequality~\ref{EQ:hyp-area} gives $\text{hyp-area}(|K(r)|) \geq (\mathtt{F} - 2 \mathtt{V}) \pi$ for any superpacking label $r \in \mathfrak{R}$, and an Euler characteristic argument gives
	\begin{equation}
	\mathtt{F} - 2 \mathtt{V} = - 2 \chi(S_{g}) = 4g-4.
	\end{equation}
When $g\geq 2$ so that $\mathtt{F} - 2 \mathtt{V}$ is positive and hence $\text{hyp-area}(|K(r)|)$ is positive, the same argument used in the proof of the Koebe Circle Packing Theorem shows that items (i) and (ii) of that proof hold, so that $\mathfrak{r}$ is a packing label. Uniqueness follows exactly as in that proof.

The remaining case is when $g=1$ so that $S_{g}$ is a topological torus. Here are the steps in proving that $S_{1}$ supports a flat metric that supports a univalent circle packing in the pattern of $K$, both the packing and the metric unique up to scaling.
\begin{enumerate}
    \item[(i)] When $g=1$, $\mathtt{F} - 2\mathtt{V} = 0$ and this implies that $\mathfrak{r} = \inf \mathfrak{R}$ is identically zero on $V(K)$.
    \item[(ii)] Fix a vertex $v^{\dag}$ in $K$ and let $\mathfrak{R}^{\dag} = \{ r^{\dag}: r\in \mathfrak{R}\}$, where $r^{\dag}$ is the normalized label defined by $r^{\dag}(v) = r(v) / r(v^{\dag})$.
    \item[(iii)] Show that $\mathfrak{r}^{\dag} = \inf \mathfrak{R}^{\dag}$ takes only positive values.
    \item[(iv)] Let $|K(\mathfrak{r}^{\dag})|_{\text{flat}}$ be the flat polyhedral surface with cone type singularities obtained by identifying a face $v_{1}v_{2}v_{3}$ with the Euclidean triangle of side-lengths $\mathfrak{r}^{\dag}(v_{i}) + \mathfrak{r}^{\dag}(v_{j})$ for $i\neq j \in \{1,2,3\}$. 
    \item[(v)]Show that  $|K(\mathfrak{r}^{\dag})|_{\text{flat}}$ is non-singular; i.e.,  $\mathfrak{r}^{\dag}$ is a flat packing label with Euclidean angle sums $\theta_{\mathfrak{r}^{\dag}}^{\text{flat}}(v) = 2\pi$ at every vertex $v$.
    \item[(vi)] Show that $\mathfrak{r}^{\dag}$ is the unique flat packing label with value $1$ at $v^{\dag}$.
\end{enumerate}
The details of the argument appear in~\cite{BSt90}, but I will give an indication of why this outline works to prove the desired result. Let $A(r)$ be the hyperbolic area of the singular hyperbolic surface $|K(r)|$ when $r \in \mathfrak{R}$ and observe that 
\begin{equation}
A(r) - s(r) = (\mathtt{F} - 2 \mathtt{V}) \pi, \quad \text{where} \quad s(r) = \sum_{v\in V(K)} (2\pi - \theta_{r}(v)).
\end{equation}
Here $s(r)$ is the total \textit{angle shortage}.\footnote{Also called the \textit{discrete curvature}.} In the genus $1$ case, $\texttt{F} - 2 \mathtt{V} =0$ so $A(r) = s(r)$ for all superpacking labels $r\in \mathfrak{R}$. Now assuming that item (i) has been verified, any superpacking label $r$ that is close to the infimum $\inf \mathfrak{R} = 0$ has area $A(r)$ close to zero and hence so too is the shortage $s(r)$ close to zero. In the limit as $r\to \inf \mathfrak{R} =0$, the shortages $s(r) \to 0$ and this implies that the singular hyperbolic surfaces $|K(r)|$ have angle sums $\theta_{r} (v) \to 2\pi$ for every vertex $v$. Since Euclidean geometry is the small scale limit of hyperbolic geometry, this implies that the Euclidean angle sums $\theta_{r}^{\text{flat}}(v) \to 2\pi$ as $r \to 0$. Thus the collection $\{|K(r)|_{\text{flat}}\}_{r \in \mathfrak{R}}$ is a collection of singular flat surfaces whose singularities are removed in the limit as $r\to 0$. Of course there is no limiting surface since $r \to 0$. Whereas this cannot be remedied in hyperbolic geometry, it can be remedied in Euclidean geometry by rescaling the labels $r$ as described in item (ii). With item (iii) confirmed so that the flat polyhedral surface $|K(\mathfrak{r}^{\dag})|_{\text{flat}}$ of item (iv) exists, since similarity transformations exist in Euclidean geometry, these rescalings preserve the Euclidean angles and imply that the limit surface $|K(\mathfrak{r}^{\dag})|_{\text{flat}}$ is non-singular.  Items (v) and (vi) just state formally the result of making this imprecise but rather accurate discussion rigorous.
\end{proof}

\subsection{KAT for compact surfaces}\label{Section:FullThurston} Thurston's Theorem 13.7.1 of GTTM combines the introduction of surfaces of genus greater than zero in Theorem~\ref{Theorem:SurfacesPack} with the overlap conditions of the KAT Circle Packing Theorem.

\begin{KAT2}[Theorem 13.7.1, GTTM]
	Let $K$ be an oriented simplicial triangulation of a surface $S_{g}$ of genus $g\geq 1$, and let $\Phi : E(K) \to [0, \pi/2]$ be a map assigning angle values to each edge of $K$. Assume that the following two conditions hold.
\begin{enumerate}
\item[(i)] If $e_{1}, e_{2}, e_{3}$ form a closed loop of edges from $K$ with $\sum_{i=1}^{3} \Phi(e_{i}) \geq \pi$, then $e_{1}$, $e_{2}$,  and $e_{3}$ form the boundary of a face of $K$.
\item[(ii)] If $e_{1}, e_{2}, e_{3}, e_{4}$ form a closed loop of edges from $K$ with $\sum_{i=1}^{4} \Phi(e_{i}) = 2\pi$, then $e_{1}$, $e_{2}$, $e_{3}$, and $e_{4}$ form the boundary of the union of two adjacent faces of $K$.
\end{enumerate}
Then there is a metric of constant curvature on $S_{g}$, unique up to scaling, and a realization of $K$ as a geodesic triangulation in that metric, as well as a family $\mathcal{C} = \{ C_{v} : v \in V(K)\}$ of circles centered at the vertices of the triangulation so that the two circles $C_{v}$ and $C_{w}$ meet at angle $\Phi(e)$ whenever $e=vw$ is an edge of $K$. The circle packing $\mathcal{C}$ is unique up to isometry.
\end{KAT2} 

I already have discussed the proof in GTTM. Let me say further that it was in this proof that Thurston introduced the idea of using labels, or radii assignments to vertices, to build a polyhedral surface with cone type singularities, and then to vary the labels until the packing condition is met. This is still the basic idea for proving many packing results, though the way in which one varies the labels and the choice of initial labels changes from researcher to researcher and from application to application. The Perron method used in this article is a modification of the method of Beardon and Stephenson~\cite{BSt90}. This idea also led to a practical algorithm for producing the packing labels that was the starting point for Ken Stephenson's \texttt{CirclePack}. This sophisticated software package for computing circle packings has enjoyed extensive development over the past thirty years and is freely available at Ken's webpage.  

Before I introduce infinite circle packings and their really interesting and novel features in Section~\ref{Section:InfinitePackings}, I'll discuss two generalizations of the KAT Theorems. The first is presented in Section~\ref{Section:branched} and generalizes KAT I to certain branched packings of the $2$-sphere where circles tangent to a given one wrap around that one more than once. These packings of course fail to be univalent, but provide a rich family of packings that model the behavior of polynomial mappings of the Riemann sphere. The ultimate goal is to model arbitrary rational mappings of the sphere, which would require the theory to extend to more general branch structures, this a topic of current research; see for example~\cite{ACS16}. The second is presented in Section~\ref{Section:Singularities} and examines how to include both cusps with ideal vertices as well as prescribed discrete curvature at pre-chosen vertices.

\subsection{A branched KAT theorem and polynomial branching}\label{Section:branched}\index{circle packing!branched} Ken Stephenson and I generalized KAT I by allowing for \textit{polynomial branching} to occur in the circle packing. \textit{Branching} means that we allow for the angle sums at predetermined vertices to be a positive integer multiple of $2\pi$ rather than just $2\pi$, or stated differently, we allow the circles tangent to a given one to wrap around that given circle multiple times before closing up; see \Cref{Figure:Branching}.
\begin{figure}
\includegraphics[width=3.5in]{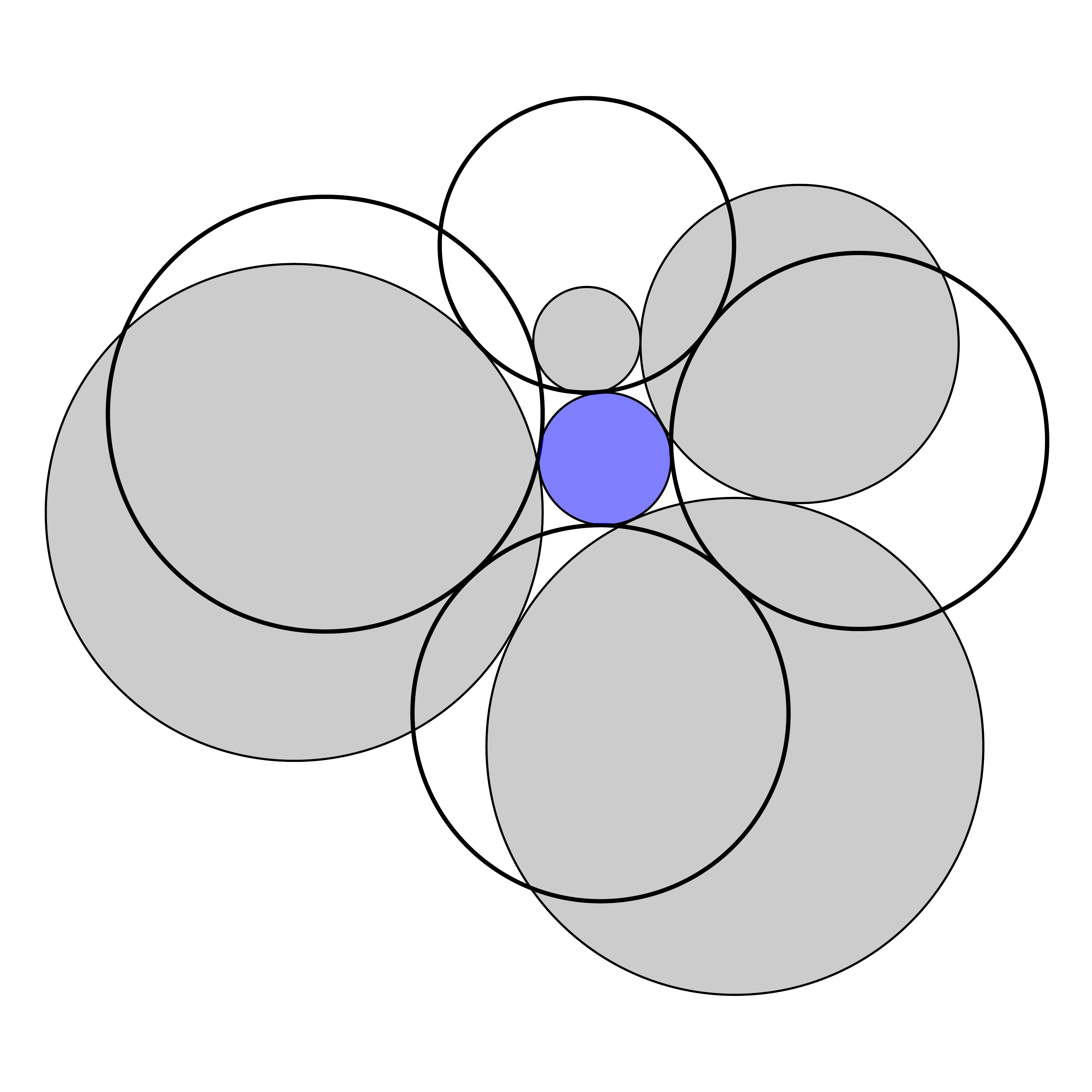}
    \caption{Branching of multiplicity $m=2$ or order $\mathfrak{o} = 1$. Starting with the grey disk on the left and moving counterclockwise, four sequentially tangent grey disks wrap around the blue central disk nearly one full turn, at which point the sequentially tangent transparent (or white) disks take over to wrap around slightly more than one full turn to close up the flower of circles with angle sum $\theta = 4\pi$.}
    \label{Figure:Branching}
\end{figure}
\textit{Polynomial} means that half the branching is concentrated at one vertex. The terminology comes from the classical theory of rational maps. Indeed, rational mappings may be thought of as branched self-mappings of the $2$-sphere, and the polynomial mappings are precisely those in which there is an even amount of branching with half the branching occurring at a single point. Taken together, a circle packing promised by the next theorem mimics the behavior of a polynomial mapping of the Riemann sphere. 

Our proof of the theorem as presented in~\cite{BS96} offers an independent proof of KAT I, which the branched version reduces to when the \textit{branch structure} $\beta$ is empty. In fact as far as I know, it was the first full direct proof of KAT I given that Thurston proves only the tangency case (the Koebe Circle Packing Theorem) and Marden-Rodin~\cite{MR90}, though allowing overlapping circles, has more restrictive hypotheses. KAT I is implied by Igor Rivin's earlier work, which bears the same resemblance to KAT I as does Andre'ev's in that it is a result on the existence of hyperbolic polyhedra. 

I state the result and then backtrack to fill in definitions and discuss the proof.

\begin{BranchedKAT}[Bowers and Stephenson~\cite{BS96}]
Let $K$ be an oriented simplicial triangulation of $\mathbb{S}^{2}$, different from the tetrahedral triangulation, and let $\Phi : E(K) \to [0, \pi/2]$ be a map assigning angle values to each edge of $K$. Assume that the following two conditions hold.
\begin{enumerate}
\item[(i)] If $e_{1}, e_{2}, e_{3}$ form a closed loop of edges from $K$ with $\sum_{i=1}^{3} \Phi(e_{i}) \geq \pi$, then $e_{1}$, $e_{2}$,  and $e_{3}$ form the boundary of a face of $K$.
\item[(ii)] If $e_{1}, e_{2}, e_{3}, e_{4}$ form a closed loop of edges from $K$ with $\sum_{i=1}^{4} \Phi(e_{i}) = 2\pi$, then $e_{1}$, $e_{2}$, $e_{3}$, and $e_{4}$ form the boundary of the union of two adjacent faces of $K$.
\end{enumerate}
If $\beta$ is a polynomial branch structure for the edge-labeled triangulation $(K, \Phi)$, then there exists a circle packing $\mathcal{C} = \{ C_{v} : v \in V(K)\}$ for $(K, \Phi)$, a family of circles in $\mathbb{S}^{2}$ so that the two circles $C_{v}$ and $C_{w}$ meet at angle $\Phi(e)$ whenever $e=vw$ is an edge of $K$, with $\mathrm{br}(\mathcal{C}) = \beta$. The circle packing $\mathcal{C}$ is unique up to M\"obius transformations.
\end{BranchedKAT}

A branch structure essentially is a listing of some of the vertices of $K$, each paired with an integer $\geq 2$ that indicates how many times the circles adjacent to the ones corresponding to the selected vertices wrap around before closing up. Before making this precise, let's observe that there must be further combinatorial conditions to ensure that a branched circle packing exists for the branch structure. Indeed, note that when there is no branching, the fact that $K$ is a simplicial triangulation implies that the degree of each vertex is at least three, and this local condition guarantees that there are enough circles adjacent to a given circle to wrap around once, with angle sum $2\pi$, at least in the tangency case. A moment's thought will show that if the desire is that there be branching of \textit{multiplicity} $m\geq 2$ at a circle $C_{v}$, meaning that the circles adjacent to $C_{v}$ wrap around $m$ times before closing up, there had better be at least $1+ 2m$ adjacent ones to achieve the angle sum of $2\pi m$. This may not be sufficient but certainly is necessary, and the definition of a polynomial branch structure includes enough combinatorial conditions to ensure sufficiency.

To clothe this discussion in a bit of flesh, suppose that $\mathcal{C} = \{C_{v}: v \in V(K)\}$ is a circle packing for the pair $(K, \Phi)$. For each vertex $v\in V(K)$, identify $v$ with the center of its corresponding circle $C_{v}$. Fixing a vertex $v$, let $v_{1}, \dots, v_{n}$ be the list of neighbors of $v$ forming the consecutive vertices in a walk around the boundary of the star $\text{st}(v)$ of $v$, and let $\alpha_{i}$ be the measure of the spherical angle $\angle v_{i}vv_{i+1}$. Then $v$ is said to be a \textit{branch point} of \textit{order $\mathfrak{o} = m-1$}, or of \textit{multiplicity $m$}, if $\theta(v) = 2\pi m$ for some integer $m\geq 2$, where $\theta(v) = \alpha_{1} + \cdots + \alpha_{n}$ is the \textit{angle sum} at $v$; again, see~\Cref{Figure:Branching}. The \textit{branch set} $\mathrm{br}(\mathcal{C})$ of the circle packing is the set of ordered pairs $(v,\mathfrak{o}(v))$ as $v$ ranges over the branch points and $\mathfrak{o}(v)$ is the order of $v$. It is clear that the combinatorics of $K$ as well as the values of $\Phi$ restrict the branch orders. 

My aim is to construct circle packings of $\mathbb{S}^{2}$ in the pattern of $K$ with overlaps given by $\Phi$ with a given, predetermined branch set. Toward this end, I will define a branch structure on the complex $T = K \setminus\text{Int}[\text{st}(v_{\infty})]$ that triangulates the closed disk one obtains by deleting one vertex, $v_{\infty}$, and its incident open cells from $K$. I will use $\Phi_{T}$ to mean the restriction of $\Phi$ to the vertices of $T$.
\begin{Definition}[\textsc{branch structure}]
A set $\beta = \{(v_{1}, \mathfrak{o}_{1},), \dots ,(v_{\ell}, \mathfrak{o}_{\ell})\}$, where $v_{i}, \dots, v_{\ell}$ is a pairwise distinct list of interior vertices of $T$ and each $\mathfrak{o}_{i}$ is a positive integer, is a \textit{branch structure} for the pair $(T, \Phi_{T})$ if the following condition holds: for each simple closed edge path $\gamma = e_{1} \cdots e_{n}$ in $T$ that bounds a combinatorial disk $D$ that contains at least one of the vertices $v_{i}$, the inequality
\begin{equation}\label{EQ:BranchCondition}
\sum_{i=1}^{n} \left[ \pi - \Phi_{T}(e_{i})\right] > 2\pi  (\mathfrak{o}(D) +1)
\end{equation}
holds, where $\mathfrak{o}(D) = \sum \mathfrak{o}_{i}$, the sum taken over all indices $i$ for which $v_{i} \in \text{Int} (D)$.
\end{Definition}
We will see that this condition on the combinatorics of $T$ and the values of $\Phi_{T}$ ensures that there are no local obstructions to the existence of a circle packing for $(T, \Phi_{T})$ whose branch set is $\beta$, and in fact is enough to ensure that there are no global ones. 
\begin{Definition}[\textsc{polynomial branch structure}]
Let $K$ be a simplicial triangulation of $\mathbb{S}^{2}$ with edge function $\Phi : V(K) \to [0,\pi/2]$. A collection
\begin{equation*}
\beta = \{(v_{\infty},\mathfrak{o}_{\infty}), (v_{1}, \mathfrak{o}_{1}), \dots (v_{\ell}, \mathfrak{o}_{\ell})\}  
\end{equation*}
is a \textit{polynomial branch structure} for $(K, \Phi)$ if the following conditions prevail.
\begin{enumerate}
    \item $\mathfrak{o}_{\infty} = \mathfrak{o}_{1} + \cdots + \mathfrak{o}_{\ell}$.
    \item The vertices $v_{1}, \dots, v_{\ell}$ are all interior vertices of the complex $T = K \setminus\text{Int}[\text{st}(v_{\infty})]$.
    \item $\beta_{T} = \{ (v_{1}, \mathfrak{o}_{1}), \dots (v_{\ell}, \mathfrak{o}_{\ell})\}$ is a branch structure for $(T,\Phi_{T})$.
    \item No $\Phi$-edge labeled subgraph of the type given in~\Cref{Figure:Forbidden} occurs in $K$ where $v$ is one of the branch vertices $v_{1}, \dots ,v_{\ell}$.
\end{enumerate}
\end{Definition}
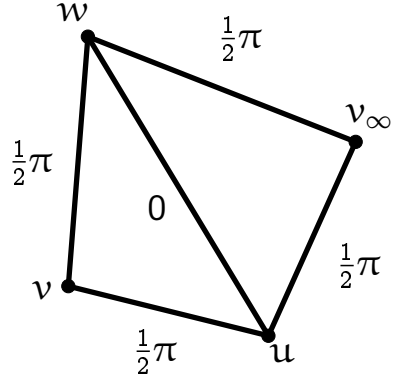
\begin{figure}

\begin{tikzpicture}[line cap=round,line join=round,>=triangle 45,x=1.0cm,y=1.0cm]
\clip(-0.5,-0.5) rectangle (5.3,5.5);
\draw [line width=2.pt] (0.6,1.28)-- (3.26,0.62);
\draw [line width=2.pt] (3.26,0.62)-- (4.42,3.2);
\draw [line width=2.pt] (4.42,3.2)-- (0.86,4.6);
\draw [line width=2.pt] (0.86,4.6)-- (0.6,1.28);
\draw [line width=2.pt] (0.86,4.6)-- (3.26,0.62);
\begin{Large}
\draw [fill=black] (0.6,1.28) circle (2.5pt);
\draw[color=black] (0.26,1.23) node {$v$};
\draw [fill=black] (3.26,0.62) circle (2.5pt);
\draw[color=black] (3.44,0.39) node {$u$};
\draw [fill=black] (4.42,3.2) circle (2.5pt);
\draw[color=black] (4.61,3.54) node {$v_\infty$};
\draw [fill=black] (0.86,4.6) circle (2.5pt);
\draw[color=black] (0.66,4.93) node {$w$};
\draw[color=black] (1.75,0.41) node {$\frac{1}{2}\pi$}; % bottom edge
\draw[color=black] (4.46,1.55) node {$\frac{1}{2}\pi$}; % right edge
\draw[color=black] (2.88,4.57) node {$\frac{1}{2}\pi$}; % top edge
\draw[color=black] (0.1,2.93) node {$\frac{1}{2}\pi$}; % left edge
\draw[color=black] (1.78,2.33) node {0};
\end{Large}
\end{tikzpicture}

\caption{A forbidden edge-labeled subgraph for a polynomial branch structure.}
\label{Figure:Forbidden}
\end{figure}
A few comments concerning this definition are in order. Item (1) says that there is an even amount of branching and half of it occurs at vertex $v_{\infty}$; item (2) says that no branch vertex from the list $v_{1}, \dots, v_{\ell}$ is adjacent to the vertex $v_{\infty}$; item (3) in particular says that Inequality~\ref{EQ:BranchCondition} holds for $(T, \Phi_{T})$; item (4) is a technical condition that avoids impossible configurations.

\begin{proof}[Discussion of proof.]
How do we put all of this together to prove the Polynomially Branched KAT Theorem? Letting $\beta = \{(v_{\infty},\mathfrak{o}_{\infty}), (v_{1}, \mathfrak{o}_{1}), \dots (v_{\ell}, \mathfrak{o}_{\ell})\}$ be a polynomial branch structure for $(K, \Phi)$, we remove the vertex $v_{\infty}$ and work with hyperbolic polyhedral metrics on the disk triangulation $T$ as in the proof of the Koebe Circle Packing Theorem. The idea is the same as there in that we want to use vertex labels on $T$ to describe hyperbolic triangles that then are identified with faces to form a singular hyperbolic surface, and then vary the labels to meet angle targets at the vertices. There are three new difficulties that appear. 
\begin{enumerate}
    \item[(i)] Target overlap angles are given by $\Phi_{T}$ for adjacent circles rather than tangencies. 
    \item[(ii)] Rather that $2\pi$, the target angle sums at branch vertices are $2\pi m_{i}$ for integers $m_{i} = \mathfrak{o}_{i} + 1 \geq 2$. 
    \item[(iii)] As the boundary $\partial \mathbb{D}$ ultimately will serve as the circle corresponding to $v_{\infty}$ in the desired circle packing, the overlaps of the boundary circles of the packing for $T$ must intersect the unit circle at the angles demanded by $\Phi$.
\end{enumerate}
Now items (i) and (ii) are really no problem as superpacking labels can be described that allow for prescribed overlap angles for adjacent circles and target angles prescribed by the branch structure. The real difficulty is item (iii). If we use radius labels, the best we can do is, as in the proof of the Koebe Theorem, get boundary circles that meet the unit circle at single points with intersection angle zero. The hint for resolving this difficulty is found in thinking a bit more about the role of circles in hyperbolic geometry, and in particular in the Poincar\'e disk model where $\mathbb{H}^{2}$ is identified with the unit disk $\mathbb{D}$, and the ideal boundary of $\mathbb{H}^{2}$ is identified with $\partial \mathbb{D} = \mathbb{S}^{1}$. Euclidean circles that meet the Poincar\'e disk $\mathbb{D}$ not only serve as hyperbolic circles, but also as horocycles and hypercycles. Those that lie entirely within $\mathbb{D}$ are hyperbolic circles, those internally tangent to the ideal boundary $\mathbb{S}^{1}$ are horocycles, and those that meet the boundary in two points $a$ and $b$ are hypercycles whose points in $\mathbb{D}$ lie equidistant to the hyperbolic line with ideal endpoints $a$ and $b$. This latter case includes the hyperbolic geodesic lines. What proves fruitful here is the fact that, when oriented, these Euclidean circles and circular arcs are precisely the curves of constant geodetic curvature in the hyperbolic plane. This is implied immediately by the fact that these are the flow lines of $1$-parameter groups of hyperbolic isometries, the hyperbolic circles the flow lines of elliptic flows, horocycles of parabolic flows, and hypercycles of hyperbolic flows. 

\label{Page:ConstantCurve}
Here are the salient facts about the geodetic curvature $\kappa$ of an arc of an oriented Euclidean circle that lies in the Poincar\'e disk $\mathbb{D}$. Call an arc $c= C\cap \mathbb{D}$, where $C$ is a Euclidean circle that meets $\mathbb{D}$, a \textit{cycle} with \textit{parent circle} $C$. There is a normalized setting in which the curvature can be read off easily. Apply a conformal automorphism of the disk so that $c$ passes through the origin and its parent circle $C$ is centered on the positive real axis. Orient $c$ counterclockwise and let $t$, $0 < t \leq \infty$ denote the point of intersection of $C$ with the interval $(0, \infty]$. Then the curvature satisfies $\kappa = \kappa (c) = 1/t$. In terms of intrinsic parameters, for counterclockwise-oriented hypercycles when $t > 1$, $\kappa(c) = \cos \alpha$ where $\alpha$ is the acute angle of intersection of $C$ with the unit circle. This includes the case of a hyperbolic geodesic where $\alpha = \pi/2$ and $\kappa = 0$. Assuming still the counterclockwise orientation, when $t=1$, $c$ is a horocycle with $\kappa(c) = 1$, and when $t<1$, $c = C$ is a hyperbolic circle of some hyperbolic radius $r$ with $\kappa(c) = \coth r$.

For our purposes it is quite fortuitous that monotone curvature parameters for cycles can be used as vertex labels on $T$ in place of radii labels to encode a singular hyperbolic metric on a disk that $T$ triangulates. The curvature is inversely related to the radii, but the really important feature is that, unlike radii labels, the curvature label can be used to identify faces of $T$, not only with hyperbolic triangles with both finite and ideal vertices, but also triangles with ``hyperideal vertices.''\footnote{When the Klein disk is used as the model for the hyperbolic plane  these are in fact Euclidean triangles that meet the disk, but whose vertices may lie within the disk, on the ideal boundary, or outside the closed disk. The hyperideal vertices are the latter ones.} This means that when curvatures $\kappa_{1}$, $\kappa_{2}$, and $\kappa_{3}$ label the vertices of the face $f$ and values $\Phi_{T}(e_{i})$ for $i=1,2,3$ label the opposite edges, the face $f$ may be identified with the region of the hyperbolic plane determined by cycles of curvatures $\kappa_{1}$, $\kappa_{2}$, and $\kappa_{3}$ overlapping with angles $\Phi_{T}(e_{i})$ for $i=1,2,3$. This accomplishes two things. First, the overlaps of cycles are given by the edge function $\Phi_{T}$. Second, and very importantly, if the vertex $w$ of $f$ is a boundary vertex and the value $\kappa = \cos \Phi_{T} (w v_{\infty})$ is used for the curvature, then the boundary cycle corresponding to $w$ overlaps with the unit circle by an angle of $\Phi_{T} (w v_{\infty})$. 

The important point is that the set $\mathfrak{K}$ of curvature labels, ones whose boundary values are given by $g(w) = \cos \Phi(wv_{\infty})$ for the boundary vertex $w$, and that produce superpackings where the angle sums at interior vertices are no more than $2\pi$ at non-branch points and no more than $2\pi m_{i}$ at branch point $v_{i}$, may be varied to obtain a $\beta$-packing label, this time as $\sup \mathfrak{K}$, the supremum instead of the infimum since curvatures are inversely related to radii. Of course by \textit{$\beta$-packing label} I mean that the angle sum at any interior vertex that is not a branch vertex is $2\pi$, and at $v_{i}$ is $2\pi m_{i}$. The argument is akin to that of the proof of the Koebe Circle Packing Theorem, but, though still elementary, is much more intricate and involved. The full detailed proof appears in ~\cite{BS96} where the key proposition, stated below, generalizes the Discrete Boundary Value Theorem of Beardon and Stephenson. Setting up this result with appropriate definitions and analysis of hyperideal hyperbolic triangles, as well as the proof itself, takes up most of the content of the paper.

\begin{Proposition}[Bowers and Stephenson~\cite{BS96}]\label{Theorem:GenBoundary}
	Let $g$ be a proper boundary label for $T$ and $\beta$ a branch structure for $(T, \Phi_{T})$. Then there exists a unique $\beta$-packing label $\mathfrak{k}$ for $(T, \Phi_{T})$ such that $\mathfrak{k} (w) = g(w)$ for every boundary vertex of $T$.
\end{Proposition}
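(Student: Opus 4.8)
The plan is to reprise the upper Perron method from the proof of the Koebe Circle Packing Theorem, now with \emph{curvature labels} in place of radius labels and with a supremum in place of an infimum, since the geodetic curvature of a cycle decreases as its hyperbolic radius grows. Accordingly, set
\[
\mathfrak{K} = \{\, \kappa : \kappa \text{ is a superpacking curvature label for } (T,\Phi_T) \text{ with } \kappa(w)=g(w) \text{ for all } w\in V_{\mathrm{Bd}}(T) \,\},
\]
where ``superpacking'' means that every face $v_1v_2v_3$ of $T$, labelled by curvatures $\kappa(v_i)$ at its vertices and overlaps $\Phi_T(e_i)$ on the opposite edges, closes up into a (possibly hyperideal, possibly truncated) hyperbolic triangle, and the resulting cone angle $\theta_\kappa(v)$ is at most $2\pi$ at each interior non-branch vertex and at most $2\pi(\mathfrak{o}_i+1)$ at each branch vertex $v_i$. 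The first task is to check that $\mathfrak{K}\ne\emptyset$: driving the interior curvatures up---equivalently the interior radii down toward $0$---degenerates every face so that its apex angles all tend to $0$, whence all interior cone angles eventually fall below their targets, and this can be done while keeping the prescribed finite boundary values $g(w)=\cos\Phi(wv_\infty)$ fixed. Then define $\mathfrak{k}=\sup\mathfrak{K}$, computed pointwise; the claim is that $\mathfrak{k}$ is the asserted $\beta$-packing label.

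The argument rests on two ingredients, the hyperideal analogues of items (iii) and (iv) of the Koebe proof, each of which I would first isolate as a lemma about cycle-triangles. The first is an \emph{area lower bound}: a Gauss--Bonnet / Euler-characteristic computation in the spirit of Inequality~\ref{EQ:hyp-area}, now also tracking the boundary contributions of the truncating arcs of the hypercyclic boundary cycles, bounds $\textrm{hyp-area}(|T(\kappa)|)$, and the area of any subcomplex, below by a positive constant. When branch vertices lie inside a combinatorial disk $D$, their targets $2\pi(\mathfrak{o}_i+1)$ would threaten this bound, and it is precisely Inequality~\ref{EQ:BranchCondition}---which forces the angle budget $\sum_i[\pi-\Phi_T(e_i)]$ around $\partial D$ to strictly exceed $2\pi(\mathfrak{o}(D)+1)$---that restores it. The second ingredient is \emph{monotonicity of angles}: for a single cycle-triangle, increasing one of the three curvature labels while holding the other two fixed strictly increases that vertex's own apex angle and strictly decreases the other two, with the expected limiting behavior; this is proved by the hyperbolic trigonometry of hyperideal and truncated triangles. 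As in the Koebe proof, monotonicity also gives that $\mathfrak{K}$ is closed under pointwise maxima, so $\mathfrak{k}$ is a limit of labels in $\mathfrak{K}$.

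Granting these, the rest follows the Koebe template. First, $\mathfrak{k}$ cannot take the value $\infty$ at any interior vertex: if it did, then---exactly as for item (i) of the Koebe proof, first ruling out that $\mathfrak{k}$ is identically $\infty$ on the interior vertices and then, via the subcomplex $T'$ spanned by the faces meeting $\mathfrak{k}^{-1}(\infty)$, the general case---the area lower bound for $|T'(\kappa)|$ is violated, since along a sequence $\kappa_i\in\mathfrak{K}$ increasing to $\mathfrak{k}$ the cycles at the infinite-label vertices degenerate and drive $\textrm{hyp-area}(|T'(\kappa_i)|)\to 0$. Since the boundary values are finite, $\mathfrak{k}$ is then finite everywhere, and continuity of the apex angles in the (finite) labels shows that the cone angles of $\mathfrak{k}$ respect all the superpacking bounds, so $\mathfrak{k}\in\mathfrak{K}$. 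Next, if some interior vertex $v$ had cone angle strictly below its target, monotonicity lets one raise $\mathfrak{k}(v)$ slightly---this increases $\theta(v)$ toward its target while only lowering the angle sums at the neighbors of $v$---producing a label still in $\mathfrak{K}$ but strictly exceeding $\mathfrak{k}$ at $v$, contradicting $\mathfrak{k}=\sup\mathfrak{K}$; hence $\mathfrak{k}$ is a genuine $\beta$-packing label. For uniqueness, any $\beta$-packing label $\mathfrak{k}'$ with boundary values $g$ lies in $\mathfrak{K}$, so $\mathfrak{k}'\le\mathfrak{k}$; but $\mathfrak{k}$ and $\mathfrak{k}'$ prescribe the same cone angles everywhere and share the same boundary, so Gauss--Bonnet forces $\textrm{hyp-area}(|T(\mathfrak{k}')|)=\textrm{hyp-area}(|T(\mathfrak{k})|)$, while the (truncated) area of $|T(\cdot)|$ is strictly decreasing in the curvature labels---the analogue of part (iii) of the Discrete Schwarz--Pick Lemma---whence $\mathfrak{k}'=\mathfrak{k}$. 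The main obstacle throughout is the geometry of hyperideal and truncated hyperbolic triangles: making the face-identification precise for large labels, establishing the angle monotonicity and the correct form of Gauss--Bonnet in that regime, and verifying that Inequality~\ref{EQ:BranchCondition} supplies exactly the area bound required. Carrying this out rigorously is the technical core of~\cite{BS96}.
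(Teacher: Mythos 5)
You have reconstructed the architecture that the paper sketches and that \cite{BS96} executes in detail: pass from radius labels to geodetic-curvature labels so that the boundary cycles can be made to meet $\partial\mathbb{D}$ at the angles dictated by $\Phi$, take the upper Perron envelope $\mathfrak{k}=\sup\mathfrak{K}$ (a supremum because curvature is inversely related to radius), and close the argument with the two Koebe ingredients: an area lower bound on subcomplexes, with Inequality~\ref{EQ:BranchCondition} supplying exactly the slack needed to absorb the enlarged targets $2\pi(\mathfrak{o}_{i}+1)$, and the monotonicity of apex angles in the labels. That skeleton, and in particular your identification of the role of the branch inequality in the area bound, is exactly right.

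Two steps do not hold as written. First, your proof that $\mathfrak{K}\neq\emptyset$ runs the monotonicity backwards and contradicts the monotonicity lemma you state immediately afterwards: raising a curvature label shrinks the corresponding radius, and shrinking a radius drives the apex angle \emph{at that vertex} up toward $\pi$, not down toward $0$ (and if all interior radii shrink together the faces become nearly Euclidean, so the cone angles tend to Euclidean values that exceed $2\pi$ at vertices of sufficiently high degree). To exhibit an element of $\mathfrak{K}$ one must push the interior curvatures \emph{down} toward the horocyclic range, i.e.\ take the radii large---the curvature-label translation of the Koebe proof's choice of labels ``so large that every interior angle is at most $2\pi/d$.'' The repair is a sign reversal, but the step as stated is false. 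Second, in the uniqueness argument, Gauss--Bonnet does not force $|T(\mathfrak{k})|$ and $|T(\mathfrak{k}')|$ to have equal area when the boundary values are finite: the angle sums at \emph{boundary} vertices are not prescribed by the packing condition, yet they enter the area formula $\pi\,\mathtt{F}-\sum_{v}\theta(v)$, so the ideal-polygon computation that closes the Koebe case is unavailable here. An area argument can still be made to work, but only by playing the face-wise monotonicity of area against a second monotonicity (lowering the interior curvatures while fixing the boundary labels raises the boundary angle sums), or by invoking the Schwarz--Pick-type path-and-angle analysis of \cite{BSt91a}; the one-line appeal to Gauss--Bonnet does not suffice.
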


This then is used to complete the proof of the Polynomially Branched KAT Theorem by using the circle packing produced by Proposition~\ref{Theorem:GenBoundary}, augmented by the unit circle corresponding to the removed vertex $v_{\infty}$ to define $\mathcal{C}$. Much of this becomes routine at this point, except that one still must confirm that half the branching occurs at $v_{\infty}$. This turns out to be nontrivial. Again the details are rather involved and can be found in~\cite{BS96}.
\end{proof}

\subsection{Cusps and cone type singularities.}\label{Section:Singularities} In this section I offer a generalization of KAT II where prescribed target angle sums at vertices are assigned, and  necessary and sufficient conditions are sought to guarantee existence of such packings. This is the discrete version of the classical Schwarz-Picard problem of the existence of hyperbolic metrics on Riemann surfaces with prescribed cone type singularities. For simplicity I am going to restrict to the tangency case where $\Phi$ is identically zero. 

To set up the problem, let $K$ be a simplicial triangulation of a compact surface, possibly with boundary, with $\mathtt{F}$ faces, $\mathtt{E}$ edges, and $\mathtt{V}$ vertices. The vertex set $V(K)$ is partitioned into three sets: two disjoint subsets of interior vertices denoted as $V_\text{{Int}}$ and $V_{\text{cusps}}$, and the set $V_{\text{Bd}}$ of boundary vertices, with respective cardinalities $\mathtt{V}_{\text{Int}}$, $\mathtt{V}_{\text{cusps}}$, and $\mathtt{V}_{\text{Bd}}$. Elements of $V_{\text{Int}}$ are called \textit{interior vertices} and of $V_{\text{cusps}}$ are called \textit{cusp vertices}. Two functions are given, the first $f: V_{\text{Bd}} \to (0, \infty]$ giving target radii for the boundary vertices and the second $\theta: V_{\text{Int}} \to (0, \infty)$ giving target angle sums at interior vertices. The target angle sums at the \textit{cusp vertices} in $V_{\text{cusps}}$ are zero. The task is to give necessary and sufficient conditions on $K$ to guarantee the existence of a packing label $r: V(K) \to (0, \infty]$ for this data such that $r = f$ on $V_{\text{Bd}}$, $r = \infty$ on $V_{\text{cusps}}$, and $\theta_{r} (v) = \theta (v)$ for every interior vertex $v \in V_{\text{Int}}$.

To describe a solution to this problem, for any set $V$ of vertices, let $\mathtt{F}_{V}$ denote the number of faces of $K$ that meet $V$, and let $\theta(V) = \sum_{v\in V} \theta (v)$ denote the total angle sum of the vertices of $V$. Let
\begin{equation*}
\mathfrak{R} = \{ r: V(K) \to (0, \infty] : r=f \text{ on }  V_{\mathrm{Bd}}, \,  r= \infty \text{ on } V_{\text{cusps}}, \, \theta_{r}(v) \leq \theta(v) \text{ for all } v\in V_{\text{int}}\}.
\end{equation*}
This describes the set of \textit{superpacking labels} for the data $\theta$ with boundary values given by $f$ and cusp set $V_{\text{cusps}}$. A \textit{packing label} for this data is a superpacking label where, in addition, the target angle sums given by $\theta$ are met, so that $\theta_{r}(v) = \theta(v)$ for all $v\in V_{\mathrm{Int}}$. For any superpacking label $r$ and vertex set $V$, let $\theta_{r} (V) = \sum_{v\in V} \theta_{r}(v)$. The next theorem gives necessary and sufficient conditions for a solution to the discrete Schwarz-Picard boundary value problem. The proof is a generalization of the proof presented herein for the Koebe Circle Packing Theorem. There the important invariant is $\mathtt{F} - 2 \mathtt{V}_{\text{Int}}$. In the borderless case of Theorem~\ref{Theorem:SurfacesPack}, the important invariant is $\mathtt{F} - 2 \mathtt{V}$. These arise from writing the hyperbolic area of the surface determined by a packing label, provided one exits, in terms of combinatorial invariants. The corresponding fact in this setting is that, for any packing label $\mathfrak{r}$ for the data $f$, $\theta$, and $V_{\text{cusps}}$, 
\begin{equation*}
	\text{hyp-area}(K(\mathfrak{r})) + \theta_{\mathfrak{r}}(V_{\text{Bd}}) = \pi \, \mathtt{F} - \theta_{\mathfrak{r}} ( V_{\text{Int}}) = \pi\, \mathtt{F} - \theta (V_{\text{Int}}).
\end{equation*}  
The right hand side of this equation is an invariant of $K$ and $\theta$ and must be positive since the left hand side is positive. Also, for every interior vertex $v$, 
\begin{equation*}
	\theta(v) = \theta_{\mathfrak{r}}(v) < \pi \,\text{deg}\, v
\end{equation*}
These give two necessary conditions for a desired label to exist, but these are not sufficient. Nonetheless, these two conditions are the extreme cases of the sufficient condition that appears as item (i) of the theorem. 

\begin{DSPBVT}[Bowers~\cite{pB93}]
 The following are equivalent.
 \begin{enumerate}
 \item[(i)] For every edge-path connected set $V \subset V_{\mathrm{Int}}$ of interior vertices, the invariant $\pi \,\mathtt{F}_{V} - \theta(V)$ is positive.
 \item[(ii)] The function $\mathfrak{r} = \inf \mathfrak{R}$ does not take a zero value at any vertex.
 \item[(iii)] The function $\mathfrak{r} = \inf \mathfrak{R}$ is the unique packing label for $K$ with data $f$, $\theta$, and $V_{\mathrm{cusps}}$.
 \item[(iv)] There exist a packing label for $K$ for the data $f$, $\theta$, and $V_{\mathrm{cusps}}$.
 \end{enumerate}
\end{DSPBVT}

A word of caution is in order. Though this does solve the discrete Schwarz-Picard problem, the combinatorial condition of item (i), that $\pi \,\mathtt{F}_{V} - \theta(V) > 0$ \textit{for every} path connected subset $V$ of interior vertices, is a very difficult condition to check once the size of $K$ becomes in any way substantial. This pure mathematician has learnt to appreciate the difficulties our computational geometer cousins face when trying to make the elegant output of our theorems practical tools for performing geometric computations. This difficulty often is unrecognized or left unacknowledged by my pure mathematician siblings.

\section{Infinite Packings of Non-Compact Surfaces.}\label{Section:InfinitePackings}\index{circle packing!infinite} I now turn our attention to infinite packings of non-compact surfaces. Here new and interesting phenomena arise, fraught with their own peculiar difficulties. To keep the conversation manageable, I am restricting attention to tangency circle packings of simply connected domains and will concentrate on one very interesting problem that arises in this setting---the \textit{type problem}---and one great success in attacking the Koebe Uniformization Conjecture.

\subsection{The Discrete Uniformization Theorem.} Does every simplicial triangulation $K$ of every topological surface $S$, compact or not, admit a circle packing in some geometric structure on $S$? By passing to the universal covering surface $\widetilde{S}$ and lifting the triangulation to a triangulation $\widetilde{K}$ of $\widetilde{S}$, the question may be approached by asking whether any $G$-invariant simplicial triangulation of a simply connected surface admits a $G$-invariant circle packing in some geometric structure, where $G$ is a group of symmetries of the complex. There are only two simply connected surfaces up to homeomorphism, the sphere and the plane. The former case is addressed by the Koebe Circle Packing Theorem. In this section I will address the latter case.

Let $\mathcal{T}$ be a \textit{plane triangulation graph}, by which I mean that $\mathcal{T}$ is the $1$-skeleton of a simplicial triangulation $K$ of the topological plane. There are precisely two inequivalent conformal structures on the plane, the one conformally equivalent to the complex plane $\mathbb{C}$ and the other to the open unit disk $\mathbb{D}$. There are precisely two complete metrics of constant curvature up to scaling on the plane, the one isometric to Euclidean $2$-space $\mathbb{E}^{2}$ and of constant zero curvature, the other isometric to the hyperbolic plane $\mathbb{H}^{2}$ and of constant negative curvature. Fortunately, the conformal and the geometric structures mesh nicely in that the complex plane $\mathbb{C}$ is a conformal model of plane Euclidean geometry via its standard Euclidean metric $ds_{\mathbb{C}}= |dz|$, and the disk $\mathbb{D}$ is a conformal model of plane hyperbolic geometry via the Poincar\'e metric $ds_{\mathbb{D}} = 2|dz|/(1-|z|^{2})$. Metric circles in these two geometries are precisely the Euclidean circles contained in their point sets, so circle packings in these geometric surfaces can be identified with Euclidean circle packings of $\mathbb{C}$ and $\mathbb{D}$.  I will use $\mathbb{G}$\footnote{$\mathbb{G}$ means $\mathbb{G}$eometry.} to mean either $\mathbb{C}$ or $\mathbb{D}$ with the intrinsic Euclidean or hyperbolic geometry determined by either $ds_{\mathbb{C}}$ or $ds_{\mathbb{D}}$ when referring to geometric quantities like geodesics and angles, etc. Here is the foundational result in this setting. 

\begin{DUT}[Beardon and Stephenson~\cite{BSt90}, He and Schramm~\cite{HS93}]
	Every plane triangulation graph $\mathcal{T}$ can be realized as the contacts graph of a univalent circle packing $\mathcal{T}(\mathcal{C})$ that fills exactly one of the complex plane $\mathbb{C}$ or the disk $\mathbb{D}$. The packing is unique up to conformal automorphisms of either $\mathbb{C}$ or $\mathbb{D}$. 
\end{DUT}

The \textit{contacts} graph of a collection is a graph with a vertex for each element of the collection and an edge between two vertices if an only if the corresponding elements meet. The \textit{carrier} of the circle packing $\mathcal{C}$ in the geometry $\mathbb{G}$ is the union of the geodesic triangles formed by connecting centers of triples of mutually adjacent circles with geodesic segments, and $\mathcal{C}$ \textit{fills} $\mathbb{G}$ whenever its carrier is all of $\mathbb{G}$. When $\mathcal{C}$ is univalent and fills $\mathbb{G}$, $\mathcal{C}$ is said to be a \textit{maximal packing} for $\mathcal{T}$ or $K$, and $K$ may be realized as a geodesic triangulation of $\mathbb{G}$ whose vertices are the centers of the circles of $\mathcal{C}$ with geodesic edges connecting adjacent centers.

Once this theorem is in place, the whole of the theory of tangency circle packings on non-compact surfaces comes into play. As already indicated, in a thoroughly classical way packing questions on surfaces can be transferred to questions of packings on simply connected surfaces, this by passing to covering spaces acted upon by groups of deck transformations. Any combinatorial symmetries of the complex $K$ are realized as automorphic symmetries of $\mathbb{G}$, this from the uniqueness of the Discrete Uniformization Theorem, and this offers an alternate proof of Theorem~\ref{Theorem:SurfacesPack}, and an extension of that theorem to triangulations of arbitrary, non-compact surfaces.

Beardon and Stephenson~\cite{BSt90} proved the Discrete Uniformization Theorem when $\mathcal{T}$ has \textit{bounded degree}, a global bound on the degrees of all the vertices of $\mathcal{T}$. In this foundational paper as well as in their subsequent one~\cite{BSt91a}, Beardon and Stephenson laid out a beautiful theory of circle packings on arbitrary surfaces, gave a blueprint for developing a theory of discrete analytic functions, and articulated one of the most interesting problems in the discipline, that of the \textit{circle packing type problem} for non-compact surfaces, this latter the subject of the section following. The bounded degree assumption was needed both to verify that the packing fills $\mathbb{G}$ and for the uniqueness, and He and Schramm~\cite{HS93} removed the bounded degree hypothesis and proved the general case where there is no global bound on the degrees of vertices. Earlier, Schramm~\cite{oS91} had proved a very general rigidity theorem for infinite packings of planar domains whose complementary domains are a countable collection of points, and He and Schramm~\cite{HS93} extended this to general countably connected domains. 

\begin{proof}[Discussion of Proof.]The full proof is scattered throughout several articles published in the nineteen-nineties. In what constitutes a significant service to the discipline, Ken Stephenson has laid out a complete proof in roughly fifty pages of his wonderful text \textit{Introduction to Circle Packing}~\cite{kS05}. I have not the space here to do justice to the argument, but I will make some comments.

Beardon and Stephenson's proof of existence relies on the Maximal Disk Packing Theorem and uses a diagonal argument on a sequence of finite subcomplexes of $K$ that exhausts $K$. It does not depend on any bounded degree assumption and is quite straightforward. The proof of existence goes like this. Write $K = \cup_{i=1}^{\infty} K_{i}$ as a nested, increasing union of finite subcomplexes $K_{i}$, each a simplicial triangulation of a closed disk. Apply the Maximal Disk Packing Theorem to obtain a sequence $\mathcal{C}_{i}$ of univalent, maximal circle packings for the complexes $K_{i}$ in the unit disk $\mathbb{D}$ realized as the Poincar\'e disk model of hyperbolic geometry. Fix a base vertex $v_{0}$ of $K_{1}$ and let $C_{i}$ be the circle of $\mathcal{C}_{i}$ that corresponds to $v_{0}$. By applying an automorphism of the disk if needed, assume that $C_{i}$ is centered at the origin and of hyperbolic radius $\mathfrak{r}_{i}(v_{0})$. Now the Discrete Schwarz-Pick Lemma implies that the sequence $\mathfrak{r}_{i}(v_{0})$ of hyperbolic radii is non-increasing, hence has a limit, say $\mathfrak{r}(v_{0}) \geq 0$, as $i\to \infty$. There are two cases.
	\begin{enumerate}
	\item[(I)] The limit radius $\mathfrak{r}(v_{0}) \neq 0$;
	\item[(II)] The limit radius $\mathfrak{r}(v_{0}) = 0$.
	\end{enumerate}
The first claim is that if $v$ is any other vertex of $K$ whose corresponding circle of $\mathcal{C}_{i}$, for large enough $i$, has hyperbolic radius $\mathfrak{r}_{i}(v)$, then $\lim_{i\to \infty}\mathfrak{r}_{i}(v)$ is not zero when case (I) occurs and is equal to zero when case (II) occurs. This means that the limit radius function $\mathfrak{r}: V(K) \to [0, \infty)$ never takes a zero value in case (I) and is identically zero in case (II). The proof of this claim uses the \textit{Ring Lemma}\label{RingLemma} of Burt Rodin and Dennis Sullivan that was crucial in~\cite{RS87} in their confirmation of Thurston's outlined proof of the \textit{Discrete Riemann Mapping Theorem} presented in his 1985 Purdue lecture; see Section~\ref{Section:DRMT}. The Ring Lemma guarantees the existence of a sequence of positive constants $\mathfrak{c}_{d}$ such that, when $d\geq 3$ disks form a cycle of sequentially tangent disks all tangent to a central disk of Euclidean radius $R$, and the disks have pairwise disjoint interiors, then the smallest disk has Euclidean radius $\geq \mathfrak{c}_{d}R$. The Ring Lemma is applied as follows. Let $v_{0} \, v_{1} \, \cdots \, v_{n} = v$ be a path of vertices in $K$ from $v_{0}$ to $v$ and choose $N$ so large that this path of vertices is contained in the interior of $K_{i}$, for all $i \geq N$. The Ring Lemma applied sequentially to the chain of pairwise tangent circles in $\mathcal{C}_{i}$ corresponding to the path $v_{0} \, v_{1} \, \cdots \, v_{n} = v$ implies that there is a positive constant $\mathfrak{c}$ such that $R_{i}(v) \geq \mathfrak{c}\, R_{i}(v_{0})$, where $R_{i}$ is the Euclidean radius function on $\mathcal{C}_{i}$.  This holds for all $i \geq N$ and the constant $\mathfrak{c}$ is independent of $i$. As hyperbolic and Euclidean radii of circles in the disk are comparable in the small, this implies the claim.

Now order the vertex set $V(K)$ as $v_{0}, v_{1}, \dots$. In case (I), choose a subsequence $i_{j}$ so that the hyperbolic centers of the circles of the sequence $\mathcal{C}_{i_{j}}$ all corresponding to the vertex $v_{1}$ converge in the closed disk $\overline{\mathbb{D}}$ to a point $c_{1}$. An application of item (ii) of the Discrete Schwarz Pick Lemma implies that $c_{1}$ is contained in the open disk $\mathbb{D}$. Repeat to find a subsequence of $i_{j}$ for which the hyperbolic centers of the circles corresponding to $v_{2}$ converge to a point $c_{2}$ in $\mathbb{D}$. Iterating and applying a diagonal argument gives a subsequence of the sequence of circle packings $\mathcal{C}_{i}$ for which the hyperbolic centers of the circles corresponding to the vertex $v_{n}$ of $K$ converges to a point $c_{n}$ in $\mathbb{D}$ for all positive integers $n$. Centering a circle of hyperbolic radius $\mathfrak{r}(v_{n})$ at the point $c_{n}$ produces a circle packing in the Poincar\'e disk in the pattern of $K$. In case (II) when $\mathfrak{r}$ is identically zero, a diagonal argument applied to the scaled packing $\frac{1}{R_{i}}\mathcal{C}_{i}$, where $R_{i}$ is the Euclidean radius of $C_{i}$, produces a circle packing in the plane $\mathbb{C}$ in the pattern of $K$. Call the limit circle packing in either case $\mathcal{C}$.

There are three facts left to prove: first, that $\mathcal {C}$ is univalent; second, that $\mathcal{C}$ fills the disk in case (I) and the plane in case (II); third, that $\mathcal {C}$ is unique up to automorphisms. The first claim of univalence follows from the fact that each circle packing $\mathcal{C}_{i}$ is univalent and the convergent subsequence of radii and centers described above essentially describes geometric convergence of circle packings. Beardon and Stephenson's original proof of the second claim that the packing fills $\mathbb{G}$ relied critically on the bounded degree assumption. It was used to ensure that piecewise linear maps from the complexes $K_{i}$ into the geometry $\mathbb{G}$ defined using the convergent sequence of circle packings are uniformly quasiconformal so that the Carath\'eodory Kernel Theorem~\cite{cC1912} applies to ensure that the image of the limit function is the kernel of the image sets, which is the whole of $\mathbb{G}$. The third claim of uniqueness in the hyperbolic case (I) follows from the uniqueness of the limiting radius function, but in the Euclidean case (II), uniqueness uses the bounded degree assumption. Later He and Schramm removed the bounded degree assumption. Their proof of uniqueness in case (II) is particularly elegant. It is a topological proof based on the winding numbers of  mappings defined on the boundaries of corresponding intersticial regions in two circle packings for the same complex $K$, both of which fill $\mathbb{C}$. All of this is rather nicely laid out in Stephenson's \textit{Introduction to Circle Packing}~\cite{kS05}.
\end{proof}

\subsection{Types of type.}\label{Section:type} The dichotomy between hyperbolic and Euclidean behavior is evident in the Discrete Uniformization Theorem. Indeed, the combinatorial complex $K$, or its $1$-skeleton $\mathcal{T}$, determines uniquely its geometry in that the maximal circle packing $\mathcal{T}(\mathcal{C})$ fills either the disk $\mathbb{D}$ or the complex plane $\mathbb{C}$, but forbids two packings where one fills the disk and the other the plane. This leads to the next definition.
\begin{Definition}[\textsc{cp-type}]
	A simplicial triagulation $K$ of the plane, and its $1$-skeleton plane triangulation graph $\mathcal{T} = K^{(1)}$, are said to \textit{CP-parabolic} or \textit{CP-hyperbolic} when the maximal circle packing $\mathcal{T}(\mathcal{C})$ fills respectively the complex plane $\mathbb{C}$ or the disk $\mathbb{D}$. The \textit{CP-type problem} is the problem of determining whether a given complex $K$ or plane triangulation graph $\mathcal{T}$ is CP-parabolic or CP-hyperbolic. One seeks conditions or invariants on the complex $K$ or the graph $\mathcal{T}$, reasonably checked or computed, that can determine which of the two CP-types adheres. See~\Cref{Figure:CPtype}.
\end{Definition}
This is a discrete version of the classical \textit{conformal type problem}, or just \textit{type problem}\index{circle packing!type problem} for short, that of determining whether, \`a la classical Uniformization Theorem, a given non-compact simply connected Riemann surface is \textit{parabolic} and conformally equivalent to the complex plane $\mathbb{C}$, or \textit{hyperbolic} and conformally equivalent to the disk $\mathbb{D}$. 
\begin{figure}
\begin{subfigure}[b]{0.45\textwidth}
	\includegraphics[width=\textwidth]{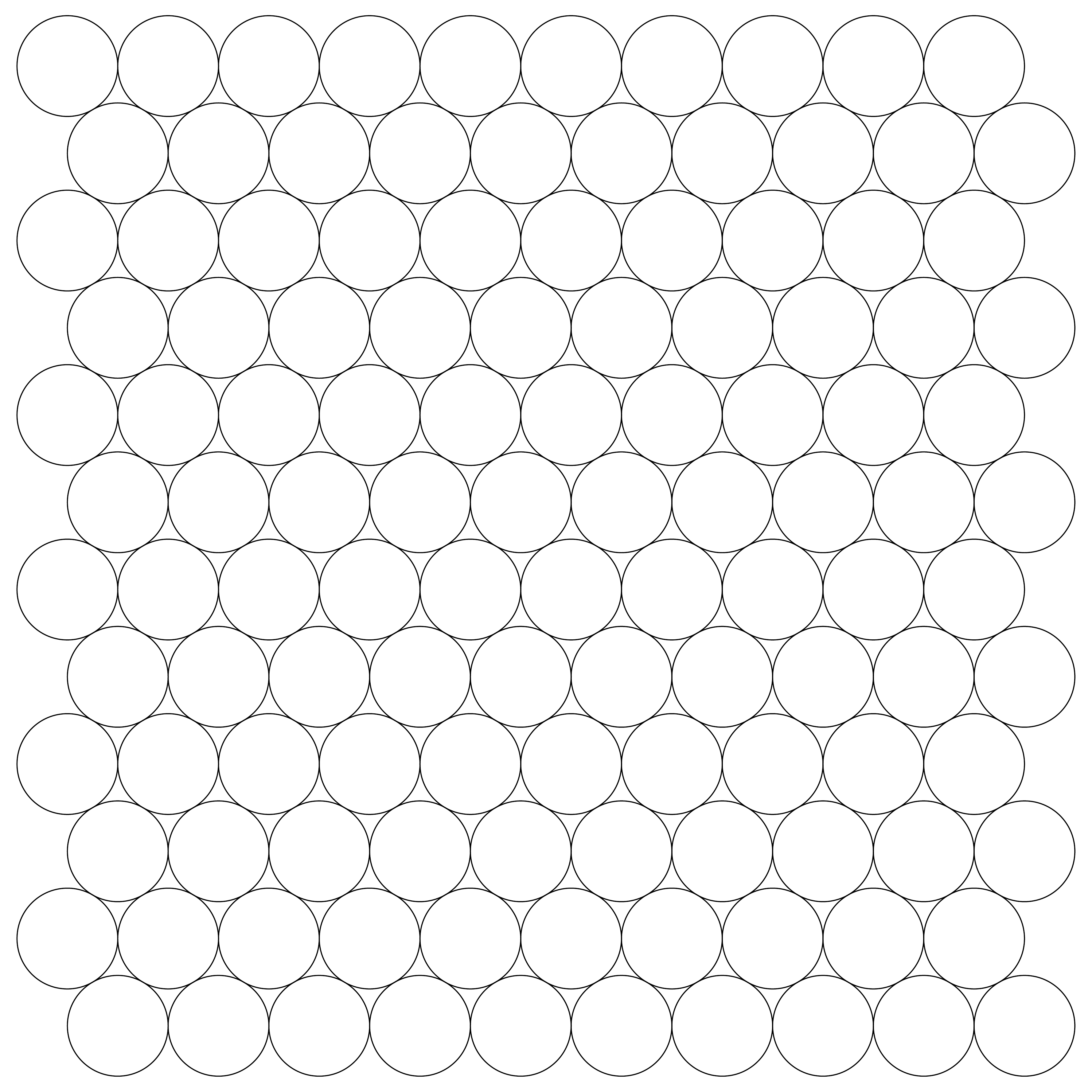}
\caption{The penny packing, the maximal circle packing for the constant $6$-degree plane triangulation graph $\mathcal{G}_{6}$, fills the plane $\mathbb{C}$. The graph $\mathcal{G}_{6}$ is parabolic.}\label{Figure:PP}
\end{subfigure}
\quad
\begin{subfigure}[b]{0.45\textwidth}
	\includegraphics[width=\textwidth]{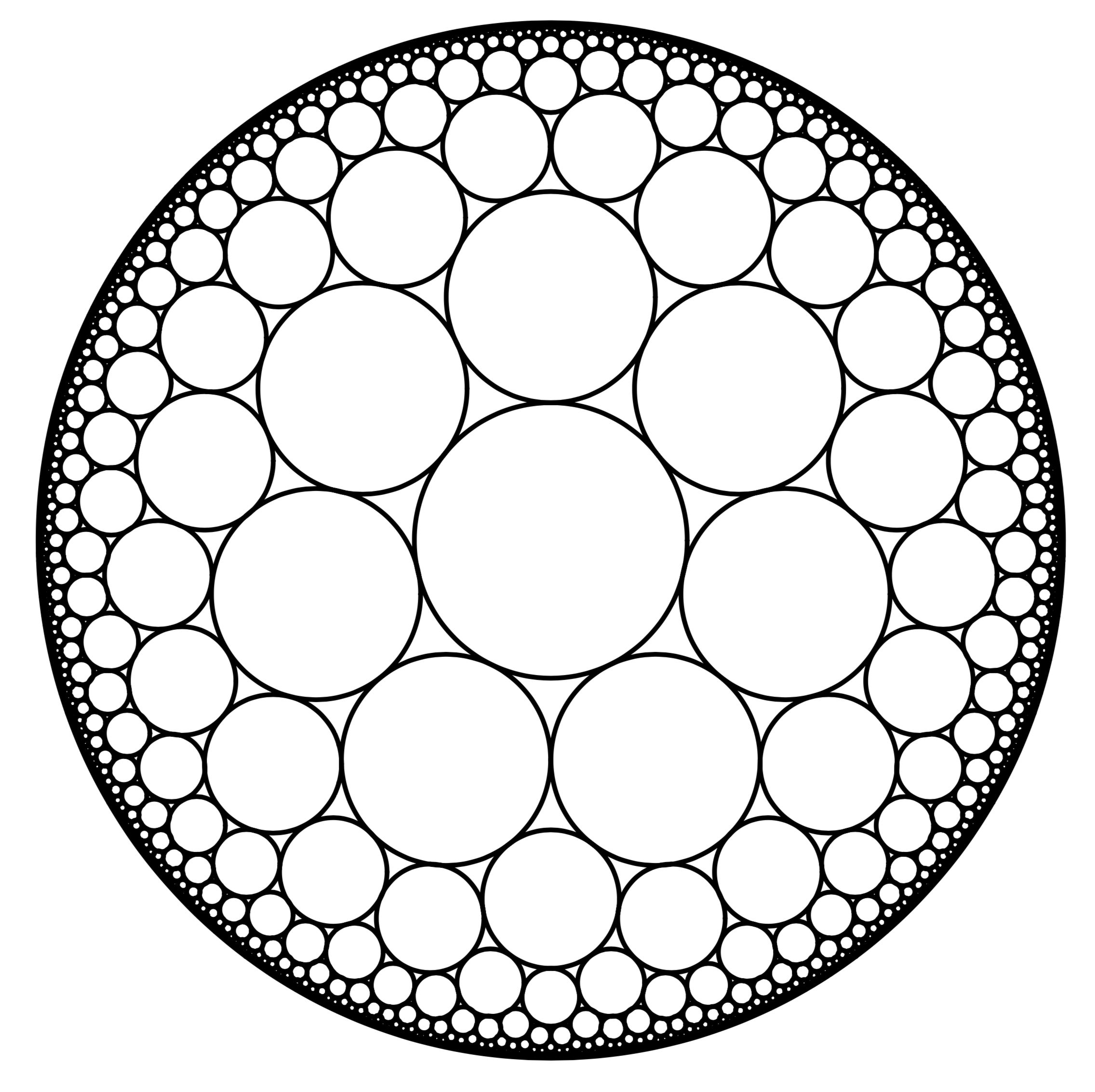}
\caption{The maximal circle packing for the constant $7$-degree plane triangulation graph $\mathcal{G}_{7}$ fills the disk $\mathbb{D}$. The graph $\mathcal{G}_{7}$ is hyperbolic, as are the graphs $\mathcal{G}_{d}$ for all $d\geq 7$.}\label{Figure:deg7}
\end{subfigure}
      \caption{The CP-type of a plane triangulation graph is determined by the corresponding maximal circle packing and whether it fills the plane or the disk.}\label{Figure:CPtype}
\end{figure}

Historically this is not the first discrete type problem. That honor probably goes to the  problem of determining the \textit{random walk type}, or \textit{RW-type} for short, of an infinite graph. My aim in this section is to review this and several other species of discrete type problems and explore their interactions in the context of plane triangulation graphs. In all I will examine six different species of discrete type that go under the abbreviations CP, RW, EL, EEL, VEL, and EQ-type.

Consider the standard simple random walk on a simple, connected, locally finite  graph $\mathcal{G}$ where the probability of walking across a particular edge $uv$ from vertex $u$ to vertex $v$ is $1/ \deg u$. The graph $\mathcal{G}$ is said to be \textit{random walk parabolic}, or \textit{RW-parabolic}, if a walker almost surely returns to a fixed base vertex, and \textit{random walk hyperbolic} or \textit{RW-hyperbolic} otherwise. More common terminology is that the graph is \textit{recurrent} when RW-parabolic and \textit{transient} when RW-hyperbolic. In a transient graph, a random walker has a positive probability for escaping to infinity whereas in a recurrent one, the escape probability vanishes and, in fact, the walker almost surely returns to every vertex infinitely often. Woess~\cite{woW00} is a fantastic reference for the classical theory of random walks on graphs and Lawler and Limic~\cite{LL10} is an up-to-date reference with many recent results.

Early on in the development of circle packing theory, Ken Stephenson made a connection between the CP- and RW-type problems. The intuition for the connection arises from the close connection in classical complex function theory between the conformal type problem and brownian motion on a Riemann surface. Stephenson~\cite{kS96} proved that the CP- and RW-types of bounded degree plane triangulation graphs always coincide. Later in~\cite{HS95a}, He and Schramm gave an example of a plane triangulation graph, necessarily of unbounded degree, that is CP-parabolic but RW-hyperbolic. There the authors focused more sharply on the distinction between these two species of type and recalled Duffin's EEL-type from~\cite{rD62} and developed Cannon's VEL-type inspired by~\cite{jC94} in articulating the distinction. 

Before continuing with the discussion of CP-type, let's review a bit of history. The story of discrete type really begins in the nineteen-twenties with P\'oyla's study~\cite{gP21} of the RW-type of the integer grid in $\mathbb{R}^{d}$ where he proved that the integer grid in $\mathbb{R}^{2}$ is RW-parabolic while the grid in higher-dimensional Euclidean spaces is RW-hyperbolic. In 1959, Nash-Williams in~\cite{N-W59} used a method of Lord Rayleigh to link the RW-type of a locally finite graph with its resistance to electric flow when each edge is thought of as a wire with a unit of electrical resistance, giving rise to EL-type. To be a bit less cryptic, when an infinite graph is thought of as an electric network with each edge representing a wire of unit resistance, the question is whether electricity will flow from a base vertex to infinity when a unit potential is applied to the base vertex and infinity is grounded. This is made a bit more precise by asking what the effective resistance is from the base vertex to infinity for the network. When the effective resistance to infinity is infinite, no current flows and the network is \textit{EL-parabolic}, and when the resistance is finite so that current does flow, the network is \textit{EL-hyperbolic}. In the beautiful 1984 Carus Mathematical Monograph~\cite{DS84} entitled \textit{Random Walks and Electric networks}, Peter Doyle and J. Laurie Snell present an accessible proof that the RW- and EL-type of an infinite graph coincide. In 1962, Duffin~\cite{rD62} gave a combinatorial invariant of a graph, the \textit{edge extremal length}, that characterizes the RW- or EL-type according to whether the edge extremal length of the set of transient edge-paths is infinite or not. 

What is the edge extremal length of a path family? It is a discrete version of the classical conformal extremal length of a path family in a Riemann surface in quasiconformal analysis. For a graph $\mathcal{G}$, let $\Gamma$ be any family of edge-paths, infinite or not. The edge extremal length is obtained by measuring the minimal length-squared of the curves in $\Gamma$ divided by the area, this maximized over all metric assignments. This is the same as the classical definition, only what changes is how the admissible metrics are assigned. Here are the details. An \textit{edge-path} in $\mathcal{G}$ is a finite or infinite sequence $\mathbf{e} = e_{1}, e_{2}, \dots$ of directed edges of $\mathcal{G}$ with the terminal vertex of $e_{i}$ equal to the initial vertex of $e_{i+1}$. An \textit{edge metric} on $\mathcal{G}$ is a function $m: E(\mathcal{G}) \to [0, \infty]$ that assigns a non-negative value to each edge, and the \textit{area of $m$} is defined as $\text{area} (m) = \sum_{e\in E(\mathcal{G})} m(e)^{2}$. An edge metric is \textit{admissible} if its area is finite and I will let $M_{E} (\mathcal{G})$ denote the collection of admissible edge metrics. The \textit{$m$-length} of the edge-path $\mathbf{e}$ is $\ell_{m}(\mathbf{e}) = \sum_{i=1} m(e_{i})$. Finally, the \textit{edge extremal length} of the family $\Gamma$ of edge-paths is
\begin{equation*}
	\text{EEL}(\Gamma) = \sup_{m\in M_{E}(\mathcal{G})} \frac{\inf_{\mathbf{e} \in \Gamma} \ell_{m}(\mathbf{e})^{2}}{\text{area}(m)}.
\end{equation*}
The notation $\text{EEL}(\mathcal{G})$ is reserved for the case where $\Gamma$ is the set of paths to infinity that start at a given base vertex $v_{0}$. These are called the \textit{transient} edge-paths in $\mathcal{G}$ based at $v_{0}$, and any such transient edge-path $\mathbf{e} \in \Gamma$ has initial vertex $v_{0}$ at its first edge $e_{1}$ and is not contained in any finite collection of edges. One says that the graph $\mathcal{G}$ is \textit{EEL-parabolic} if $\text{EEL}(\mathcal{G}) = \infty$ and \textit{EEL-hyperbolic} otherwise. It is an easy exercise to confirm that EEL-type does not depend on which base vertex is chosen. Duffin's result of \cite{rD62} already mentioned is that both the RW- and EL-type of a graph coincides with the EEL-type. This was the state of the art in discrete type in the early nineteen-nineties when Stephenson connected CP-type with RW-type for bounded degree plane triangulation graphs. 

In 1995, He and Schramm~\cite{HS95a} in a remarkable article clarified the role of the bounded degree assumption. There, after constructing a plane triangulation graph that, though CP-parabolic, is RW-hyperbolic, they applied Cannon's vertex extremal length to characterize CP-type combinatorially in the way that edge extremal length characterizes RW-type. Cannon~\cite{jC94} introduced the vertex extremal length of a discrete curve family made of shinglings and used it as a tool for assigning combinatorial moduli to ring domains in the space at infinity of a negatively curved group. He and Schramm adapted Cannon's vertex extremal length to Duffin's development of EEL-type to create VEL-type. The adjustment merely replaces edge-paths by vertex-paths and edge metrics by vertex metrics. The \textit{vertex extremal length} of a family $\Delta$ of vertex paths is 
\begin{equation}\label{EQ:VEL}
	\text{VEL}(\Delta) = \sup_{m\in M_{V}(\mathcal{G})} \frac{\inf_{\mathbf{v} \in \Delta} \ell_{m}(\mathbf{v})^{2}}{\text{area}(m)}.
\end{equation}
Here, a \textit{vertex-path} is a sequence $\mathbf{v} = v_{1}, v_{2}, \dots$ where each $v_{i}$ is incident with its successor $v_{i+1}$, and a \textit{vertex metric} is a non-negative function $m:V(\mathcal{G}) \to [0,\infty]$ with \textit{area} $\text{area} (m) = \sum_{v\in V(\mathcal{G})} m(v)^{2}$. The $m$-length of the vertex-path $\mathbf{v}$ is $\ell_{m}(\mathbf{v}) = \sum_{i=1} m(v_{i})$ and the set of \textit{admissible metrics}, the ones of finite area, is denoted as $M_{V}(\mathcal{G})$. The VEL-type of $\mathcal{G}$ now is defined analogously to EEL-type. Indeed, $\text{VEL}(\mathcal{G})$ means $\text{VEL}(\Delta)$, where $\Delta$ is the set of \textit{transient} vertex-paths based at $v_{0}$, those that meet infinitely many vertices. The graph $\mathcal{G}$ is \textit{VEL-parabolic} if $\text{VEL}(\mathcal{G}) = \infty$ and \textit{VEL-hyperbolic} otherwise, and again it is an easy exercise to confirm that VEL-type does not depend on which base vertex is chosen. This seemingly innocuous adjustment to the definition of EEL-type turns out to be precisely the tool needed to characterize CP-type.

Though, easily, the EEL- and VEL-types of a bounded degree graph coincide, they may differ for a graph of unbounded degree. The relationships between the four types---RW, EL, EEL, VEL---are summarized in the next theorem.

\begin{DTTG} Let $\mathcal{G}$ be a connected, infinite, locally finite graph. 
	\begin{enumerate}
		\item[(i)]{\em [Nash-Willliams~\cite{N-W59}, Duffin~\cite{rD62}]} The three types---RW, EL, EEL---coincide for $\mathcal{G}$.
		\item[(ii)] {\em [He-Schramm~\cite{HS95a}]} If $\mathcal{G}$ is EEL-parabolic then it is VEL-parabolic. If $\mathcal{G}$ has bounded degree and is VEL-parabolic, then it is EEL-parabolic. 
		\item[(iii)] {\em [He-Schramm~\cite{HS95a}]} There is a VEL-parabolic plane triangulation graph that is EEL-hyperbolic, necessarily of unbounded degree.
	\end{enumerate}
\end{DTTG}

For a plane triangulation graph $\mathcal{T}$, all five types---RW, EL, EEL, VEL, CP---coincide provided $\mathcal{T}$ has bounded degree. As stated above, it was Stephenson who first proved this for RW- and CP-types. He and Schramm clarified the need for the bounded degree hypothesis, and the relationship between discrete types for plane triangulation graphs is summarized next.
\begin{DTTPTG}[He-Schramm~\cite{HS95a}]
	Let $\mathcal{T}$ be a plane triangulation graph. Then $\mathcal{T}$ is CP-parabolic if and only if it is VEL-parabolic. 
\end{DTTPTG}

The proofs of these theorems are quite difficult and involved, though still elementary, and space forbids any sort of discussion of the proofs that would do justice to the subject. Suffice it to say that the interested reader can do no better than to consult the references cited in this section to fill in gaps in the desired detail of proofs.

The Discrete Type Theorem for Plane Triangulation Graphs reduces the very difficult problem of determining whether the maximal circle packing for $\mathcal{T}$ is parabolic or hyperbolic to a combinatorial computation on the graph $\mathcal{T}$. The disappointment comes when one actually tries to do the computation of $\text{VEL}(\mathcal{T})$ from Equation~\ref{EQ:VEL} for almost any given plane triangulation graph. One then finds out just how difficult it is to perform this  computation; nonetheless, this development is useful for some theoretical considerations. For example, He and Schramm use the theorem to extend Stephenson's result on RW- and CP-type. Here is an interesting result of the author that uses the computation of Equation~\ref{EQ:VEL} for the proof of item (ii) of the theorem. 

\begin{Theorem}[Bowers~\cite{pB97}]\label{Theorem:Gromov} Let $\mathcal{G}$ be a connected, infinite, locally finite graph and $\mathcal{T}$ a plane triangulation graph.
\begin{enumerate}
	\item[(i)] If $\mathcal{G}$ is Gromov negatively curved and its Gromov boundary  contains a nontrivial continuum, then $\mathcal{G}$ is RW-hyperbolic.
	\item[(ii)] If $\mathcal{T}$ is Gromov negatively curved, then $\mathcal{T}$ is CP-parabolic if and only if its Gromov boundary is a singleton; alternately, it is CP-hyperbolic if and only if its Gromov boundary is a topological circle.
\end{enumerate}	
\end{Theorem}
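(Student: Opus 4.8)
The plan is to handle both items with one device: a nondegenerate continuum in the Gromov boundary yields a quasi-isometrically embedded rooted binary tree, and a binary tree pins down the ``hyperbolic'' alternative in each dichotomy. Fix a vertex $v_0$ and a visual metric on the boundary $\partial\mathcal{G}$ based at $v_0$. A nondegenerate continuum is an uncountable compact metric space, so by Cantor--Bendixson it contains a Cantor set $\mathcal{E}$; I would organize $\mathcal{E}$ as a binary tree $\{\mathcal{E}_s\}_{s\in\{0,1\}^{<\omega}}$ of nonempty relatively clopen pieces with $\mathcal{E}_\varnothing=\mathcal{E}$, $\mathcal{E}_s=\mathcal{E}_{s0}\sqcup\mathcal{E}_{s1}$, and diameters shrinking to $0$ along each branch, pick $\xi_s\in\mathcal{E}_s$ and geodesic rays $\rho_s$ from $v_0$ to $\xi_s$, and set $x_s=\rho_s(L|s|)$ for a large scale $L$. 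Gromov hyperbolicity together with the clopen-splitting structure should make $s\mapsto x_s$, joined up by the geodesic segments $[x_s,x_{s0}]$ and $[x_s,x_{s1}]$, a quasi-isometric embedding $\phi\colon T_2\hookrightarrow\mathcal{G}$ whose edges map to paths of bounded length and whose edge-images have bounded overlap multiplicity (the last point from local finiteness plus the quasi-isometry constants).

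For part~(i) I would invoke item~(i) of the Discrete Type Theorem for Graphs to reduce RW-hyperbolicity to EL-hyperbolicity, i.e.\ to finiteness of the effective resistance from $v_0$ to $\infty$; by Thomson's principle it then suffices to produce a unit flow to $\infty$ of finite energy. On $T_2$ the ``halve the current at each branch'' flow is such a flow, with energy $\sum_{k\ge1}2^{k-1}\cdot4^{-k}<\infty$. Routing the current of each tree-edge along its $\phi$-image path, summing, and splicing in a finite flow from $v_0$ to $\phi(\mathrm{root})$ gives a unit flow to $\infty$ in $\mathcal{G}$ whose energy, by a Cauchy--Schwarz estimate edge by edge, is at most a constant (depending on the edge-length and overlap bounds) times the tree energy. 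Hence $\mathcal{G}$ is EL-, and so RW-, hyperbolic.

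For part~(ii), the Discrete Type Theorem for Plane Triangulation Graphs lets me replace CP-type by VEL-type, so the goal becomes: \textbf{(a)}~$\partial\mathcal{T}$ is a point or a topological circle, and \textbf{(b)}~$\mathcal{T}$ is VEL-hyperbolic iff $\partial\mathcal{T}$ is nondegenerate. For~\textbf{(a)} I would use that $\mathcal{T}$ triangulates the plane, hence has one end, so $\partial\mathcal{T}$ is a continuum, and is planar, so $\partial\mathcal{T}$ embeds in $S^2$; exhausting $\mathcal{T}$ by finite triangulated disks $K_n$ with boundary cycles $\gamma_n$, one has $d(v_0,\gamma_n)\to\infty$, so adjacent vertices of $\gamma_n$ have Gromov product at $v_0$ tending to $\infty$, and $\partial\mathcal{T}$ is therefore the Hausdorff limit of these cycles with adjacent vertices merging --- a Peano continuum, which being planar and the boundary of a one-ended hyperbolic graph is a point or a simple closed curve (see \cite{pB97}). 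For the ``if'' direction of~\textbf{(b)}, a nondegenerate $\partial\mathcal{T}$ again furnishes a quasi-isometric embedding $\phi\colon T_2\hookrightarrow\mathcal{T}$ with bounded edge-image length $L'$ and overlap $N$. The binary tree is VEL-hyperbolic: given a vertex metric $m$ on $T_2$, descending greedily into the child-subtree of smaller $m$-mass produces a ray $\mathbf{w}$ with $m(w_k)^2\le\mathrm{area}(m)/2^k$, so $\ell_m(\mathbf{w})\le C\sqrt{\mathrm{area}(m)}$ and $\mathrm{VEL}(T_2)<\infty$. Transferring $m$ on $\mathcal{T}$ to $m'(w)=\sum_{x\in\phi(e_w)}m(x)$ on $T_2$ ($e_w$ the tree-edge into $w$) gives $\mathrm{area}(m')\le L'N\,\mathrm{area}(m)$, while the $\phi$-image of a short $T_2$-ray is a transient path of the same $m$-length, so $\mathrm{VEL}(\mathcal{T})\le L'N\cdot\mathrm{VEL}(T_2)<\infty$; by the Discrete Type Theorem for Plane Triangulation Graphs, $\mathcal{T}$ is CP-hyperbolic.

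What is left, and where I expect the real work to be, is the ``only if'' direction of~\textbf{(b)}: if $\partial\mathcal{T}$ is a single point, then $\mathcal{T}$ is VEL-parabolic --- equivalently, CP-hyperbolicity forces a nondegenerate boundary. A geometric attack would start from a CP-hyperbolic $\mathcal{T}$, use the Discrete Uniformization Theorem to realize $K$ as a geodesic triangulation filling $\mathbb{D}$, approximate by the maximal packings of the $K_n$ (convergent via the Maximal Disk Packing Theorem and the Discrete Schwarz--Pick Lemma), note that the $\gamma_n$-horocycles cut $\partial\mathbb{D}$ into arcs that must shrink because the packing fills $\mathbb{D}$, pick boundary vertices $a_n,b_n\in\gamma_n$ whose horocyclic tangency points converge to distinct $p\ne q$, observe that circles running out to $\partial\mathbb{D}$ are forced out to infinity in the graph metric, and then try to show $(a_n\mid b_m)_{v_0}$ stays bounded so that $a_n$ and $b_m$ subconverge to distinct points of $\partial\mathcal{T}$. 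The hard part is exactly this last estimate: passing from separation of $p$ and $q$ on $\partial\mathbb{D}$ to bounded Gromov product in $\mathcal{T}$, which must genuinely exploit planarity and the filling property, since the maximal packing need not realize $\mathcal{T}$ by a quasi-isometry and the Euclidean radii of its circles can degenerate near $\partial\mathbb{D}$. In \cite{pB97} this obstacle is bypassed on the extremal-length side: for each $\lambda$ one constructs directly, from the exhausting cycles $\gamma_n$ and the thinness at infinity forced by a one-point boundary, a vertex metric certifying $\mathrm{VEL}(\mathcal{T})\ge\lambda$. Assembling \textbf{(a)} with both halves of \textbf{(b)} yields the two equivalences claimed.
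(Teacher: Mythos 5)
Your sketch and the survey's treatment barely overlap, so let me say precisely where they meet. The survey does not prove Theorem~\ref{Theorem:Gromov} in full; it proves exactly one ingredient, namely the Lemma that if a connected, infinite, locally finite graph admits pairwise disjoint vertex sets $V_{n}$, each separating the base vertex from infinity, with $\mathrm{Card}(V_{n})\leq Cn$, then the graph is VEL-parabolic. The proof is the explicit admissible metric $m\equiv 1/(n\log n)$ on $V_{n}$, whose area is finite because $\sum C/(n(\log n)^{2})$ converges while every transient vertex-path must cross every $V_{n}$ and so has $m$-length at least $\sum 1/(n\log n)=\infty$. That Lemma is stated in the survey to be the tool for the first assertion of item (ii) --- i.e.\ for the direction ``singleton Gromov boundary $\Rightarrow$ CP-parabolic'' --- which is precisely the one direction you flag as ``where the real work is'' and then defer to \cite{pB97}. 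Everything you do prove in sketch (the quasi-isometrically embedded binary tree from a nondegenerate boundary continuum, the finite-energy flow giving EL- hence RW-hyperbolicity in part (i), the greedy-descent bound $\mathrm{VEL}(T_{2})<\infty$ and its transfer to $\mathcal{T}$ via Cauchy--Schwarz and the overlap bound) is material the survey simply cites to \cite{pB97} without argument; those pieces of your sketch are standard and, as far as I can check them against the text, sound.

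The genuine gap, then, is that you have described but not executed the singleton-boundary direction, and that is the only step for which the paper supplies a proof you could have matched. To close it along the paper's lines you would need two things: (1) the Lemma above, which is a one-paragraph computation, and (2) the geometric input that a Gromov negatively curved, one-ended plane triangulation graph with one-point boundary admits disjoint separating sets of linearly bounded cardinality --- e.g.\ that the exhausting boundary cycles $\gamma_{n}$ (or combinatorial spheres) satisfy $\mathrm{Card}(\gamma_{n})\leq Cn$ because, with a single boundary point, all geodesic rays from $v_{0}$ uniformly fellow-travel and the annuli between consecutive spheres cannot fatten. Step (2) is the content you wave at with ``thinness at infinity forced by a one-point boundary''; it is not automatic and is where $\delta$-hyperbolicity is actually spent. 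Your alternative ``geometric attack'' via the maximal packing is, as you yourself observe, blocked at the Gromov-product estimate, so I would abandon it in favor of the separating-set count. With (1) and (2) supplied, your assembly of (a) and both halves of (b) does yield the theorem.
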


I refer the reader to the appendix of the article~\cite{pB97} for definitions and basic theorems on Gromov negatively curved graphs and metric spaces. To show how the computation from Equation~\ref{EQ:VEL} may proceed, I'll prove the lemma used in~\cite{pB97} to prove the first assertion of item (ii) of Theorem~\ref{Theorem:Gromov}.

\begin{Lemma}
Let $v_{0}$ be a vertex in the connected, infinite, locally finite graph $\mathcal{G}$ and let $\{V_{n}\}$ be a sequence of pairwise disjoint sets of vertices, each of which separates $v_{0}$ from infinity. Suppose there exist positive constants $C$ and $\varepsilon$ such that, for $n \geq N$,
\begin{equation*}
	\mathrm{Card} (V_{n}) \leq Cn.
\end{equation*}
Then the graph $\mathcal{G}$ is VEL-parabolic.
\end{Lemma}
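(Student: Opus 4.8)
The plan is to produce a single admissible vertex metric $m$ on $\mathcal{G}$ whose area is finite but for which every transient vertex-path based at $v_0$ has infinite $m$-length. Since $\mathrm{VEL}(\mathcal{G})$ is the supremum in Equation~\ref{EQ:VEL} over all admissible metrics, such an $m$ forces $\mathrm{VEL}(\mathcal{G}) = \infty$, which is exactly the assertion that $\mathcal{G}$ is VEL-parabolic.

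First I would establish the combinatorial fact that every transient vertex-path must cross every $V_n$. Since $V_n$ separates $v_0$ from infinity and $\mathcal{G}$ is locally finite, the connected component of $v_0$ in $\mathcal{G}\setminus V_n$ is a \emph{finite} set of vertices. A transient vertex-path $\mathbf{v} = v_0, v_1, v_2, \dots$ meets infinitely many distinct vertices, hence eventually exits this finite component, and the only way to leave it is through a vertex of $V_n$. Thus $\mathbf{v}$ contains at least one vertex of $V_n$ for every $n$. Next I would choose the metric: set $n_0 = \max\{N,3\}$ and define $m(v) = 1/(n\log n)$ when $v$ lies in $V_n$ for some $n \ge n_0$, and $m(v) = 0$ otherwise; this is unambiguous because the $V_n$ are pairwise disjoint. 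Since a transient path meets each $V_n$ with $n\ge n_0$ in at least one vertex, and distinct $V_n$'s contribute distinct terms to $\ell_m(\mathbf{v}) = \sum_i m(v_i)$, we obtain $\ell_m(\mathbf{v}) \ge \sum_{n\ge n_0} 1/(n\log n) = \infty$ for every transient $\mathbf{v}$.

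It remains to check that $m$ is admissible, i.e., has finite area. Using the hypothesis $\mathrm{Card}(V_n) \le Cn$ for $n \ge N$,
\[
\mathrm{area}(m) \;=\; \sum_{n\ge n_0} \mathrm{Card}(V_n)\,\frac{1}{(n\log n)^2} \;\le\; C\sum_{n\ge n_0} \frac{1}{n(\log n)^2} \;<\; \infty ,
\]
the final series converging by the integral test. Hence $m \in M_V(\mathcal{G})$, and Equation~\ref{EQ:VEL} gives $\mathrm{VEL}(\mathcal{G}) \ge \inf_{\mathbf{v}} \ell_m(\mathbf{v})^2 / \mathrm{area}(m) = \infty$, so $\mathcal{G}$ is VEL-parabolic.

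I do not expect a serious obstacle here. The one delicate point is the balancing of the weights: one needs a sequence $a_n$ with $\sum a_n = \infty$ but $\sum n a_n^2 < \infty$, which rules out the naive choice $a_n = 1/n$ (for which $\sum n a_n^2 = \sum 1/n$ diverges) and is exactly why the logarithmic damping $a_n = 1/(n\log n)$ is used. A secondary point requiring care is the separation claim of the first step — that a vertex-path meeting infinitely many vertices in a locally finite graph must leave every finite set and therefore pierce each separating set $V_n$ — which is where local finiteness and the definition of ``separates $v_0$ from infinity'' are actually used.
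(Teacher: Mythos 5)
Your proposal is correct and is essentially the paper's own argument: the same metric $m(v)=1/(n\log n)$ on $V_n$, the same area estimate from $\mathrm{Card}(V_n)\le Cn$, and the same observation that every transient path must pierce each separating set, forcing infinite $m$-length. The extra care you take (starting the sum at $\max\{N,3\}$ and spelling out why local finiteness makes the component of $v_0$ in $\mathcal{G}\setminus V_n$ finite) only makes explicit what the paper leaves tacit.
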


\begin{proof}
Define the vertex metric $m$ by $m(v) = 1/ (n\log n)$ for any $v\in V_{n}$ when $n\geq N$, and $m(v) =0$ otherwise. Then $m$ is admissible since
\begin{equation*}
\mathrm{area}(m) = \sum_{n=N}^{\infty} \frac{\mathrm{Card}(V_{n})}{(n \log n)^{2}} \leq  \sum_{n=N}^{\infty} \frac{C}{n(\log n)^{2}} < \infty.
\end{equation*}
For any transient vertex-path $\mathbf{v}$, the $m$-length satisfies $\ell_{m} (\mathbf{v} ) \geq \sum_{n=N}^{\infty} 1/ (n \log n) = \infty$, hence every transient vertex-path has infinite $m$-length. This implies that $\textrm{VEL} (\mathcal{G}) = \infty$ and $\mathcal{G}$ is VEL-parabolic.
\end{proof}
I'll end this section with a sixth version of discrete type that is of recent interest in several settings. It arose first for me when Ken Stephenson and I constructed expansion complexes of finite subdivision rules, for the first time in~\cite{BS97} when examining the pentagonal subdivision rule of Cannon, Floyd, and Parry~\cite{CFP01}. More recently it arises in our examination of hierarchical conformal tilings~\cite{BS14a,BS14b}, and in Gill and Rohde's~\cite{GR13} examination of random planar maps. I name this version of discrete type \textit{EQ-type} with \textit{EQ} an abbreviation for \textit{equilateral}. A plane triangulation graph $\mathcal{T} = K^{(1)}$ can be used to build a piecewise equilateral surface by setting each edge to unit length and isometrically gluing unit-sided equilateral triangles along their boundaries to the boundaries of the faces of $K$. This produces a piecewise flat surface $|\mathcal{T}|_{\mathrm{eq}}$ that has a natural conformal atlas obtained as follows. Each edge $e$ of $\mathcal{T}$ indexes a chart map $\varphi_{e}$ defined on the interior of the union of the faces incident with $e$. These have been identified with unit equilateral triangles and the chart map $\varphi_{e}$ is an orientation-preserving isometry to the plane $\mathbb{C}$. Each vertex $v$ also indexes a chart map $\varphi_{v}$ defined on the metric neighborhood of $v$ in $|\mathcal{T}|_{\mathrm{eq}}$ of radius $1/2$, and uses an appropriate complex power mapping to flatten that neighborhood to a disk in the plane $\mathbb{C}$. The overlap maps are conformal homeomorphisms between the appropriate domains. The chart family $\mathcal{A} = \{\varphi_{x} : x \in V(\mathcal{T}) \cup E(\mathcal{T})\}$ forms a complex atlas making $|\mathcal{T}|_{\mathrm{eq}}$ into a non-compact simply connected Riemann surface $S(\mathcal{T})$. The type problem now is manifest. Is $S(\mathcal{T})$ conformally the plane $\mathbb{C}$ or the disk $\mathbb{D}$? In the former case, $\mathcal{T}$ and $K$ are said to be \textit{EQ-parabolic}, in the latter \textit{EQ-hyperbolic}.

Notice that the question of the EQ-type of a plane triangulation graph is the classical question of the conformal type of a simply connected Riemann surface. It bares the moniker \textit{discrete} because of how the surface is built---using discrete building blocks, the equilateral triangles, glued in a combinatorial pattern encoded in $\mathcal{T}$. The desire is for a combinatorial invariant of $\mathcal{T}$ or $K$ that will determine its EQ-type. So, what relationship exists between the discrete types already discussed and EQ-type? For plane triangulation graphs of bounded degree, easy arguments using quasiconformal mappings show that EQ-type coincides with CP-type---just map the equilateral triangle in $|\mathcal{T}|_{\mathrm{eq}}$ at face $f$ to the corresponding geodesic triangle in $\mathbb{G}$. When $\mathcal{T}$ has bounded degree, this map is uniformly quasiconformal and so the EQ-type agrees with the conformal type of $\mathbb{G}$. For unbounded degree plane triangulation graphs, it remains an open question as to whether the EQ-type coincides with, say, the EEL- or the VEL-type, or perhaps neither. I am bold enough to offer the following conjecture.

\begin{Conjecture}
For any plane triangulation graph, EQ-type coincides with VEL-type, and therefore with CP-type.
\end{Conjecture}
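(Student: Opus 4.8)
The natural line of attack is to turn the conjecture into a comparison of two extremal lengths on the same ``end.'' By the Discrete Type Theorem for Plane Triangulation Graphs it is enough to identify the EQ\nobreakdash-type with the VEL\nobreakdash-type, and the EQ\nobreakdash-type is itself governed by a classical conformal modulus: $S(\mathcal{T})$ is conformally $\mathbb{C}$ exactly when the modulus of the family $\Gamma_{\infty}$ of curves joining a fixed compact core of $S(\mathcal{T})$ to the ideal boundary vanishes. Thus the whole problem becomes the equivalence
\[
\mathrm{mod}(\Gamma_{\infty}) = 0 \iff \mathrm{Mod}_{V}(\mathcal{T}) = 0,
\]
where $\mathrm{Mod}_{V}$ is the vertex (combinatorial) modulus of the transient vertex paths, i.e.\ the reciprocal of $\mathrm{VEL}(\mathcal{T})$. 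The plan is to pass back and forth between admissible conformal metrics $\rho$ on $S(\mathcal{T})$ and admissible vertex metrics $m$ on $\mathcal{T}$, keeping the areas comparable, using the equilateral cell structure of $|\mathcal{T}|_{\mathrm{eq}}$ as the bridge.

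For the implication $\mathrm{mod}(\Gamma_{\infty})=0 \Rightarrow \mathrm{Mod}_{V}=0$ I would build from a small\nobreakdash-area admissible $\rho$ the star\nobreakdash-average $m(v) = \bigl(\int_{\mathrm{st}(v)} \rho^{2}\bigr)^{1/2}$. Because each equilateral face lies in exactly three vertex stars one gets $\mathrm{area}(m) = \sum_{v} m(v)^{2} = 3\,\mathrm{area}(\rho)$ at once, so the only content is admissibility: a transient vertex path $\mathbf{v} = v_{1},v_{2},\dots$ must have $\ell_{m}(\mathbf{v}) \gtrsim 1$. The idea is that the chain of overlapping stars $\bigcup_{i}\mathrm{st}(v_{i})$ carries a genuinely transient curve, which is $\rho$\nobreakdash-long because $\rho$ is admissible, and one wants to charge that length against $\sum_{i} m(v_{i})$. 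Conversely, for $\mathrm{Mod}_{V}=0 \Rightarrow \mathrm{mod}(\Gamma_{\infty})=0$ I would place on the cone neighbourhood of each vertex $v$ a model conformal metric of ``size'' $m(v)$ -- say the flat disk metric of radius $m(v)$ transported through the power chart $\varphi_{v}$; since the area of a conformal metric is unchanged by the conformal chart map, this costs area $\pi\,m(v)^{2}$ and \emph{no} factor of $\deg v$ -- and then sum in quadrature. Admissibility of the resulting $\rho$ again reduces to extracting from an arbitrary transient curve $\gamma$ a transient vertex path and running the same charging argument in reverse.

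The crux, and the reason the conjecture remains open, is precisely that this charging is \emph{not} bookkeeping once $\mathcal{T}$ has unbounded degree. A transient curve can \emph{slip around} a vertex $v$ of enormous degree along the thin collar of $\mathrm{st}(v)$ near its boundary, accruing almost no $\rho$\nobreakdash-length, whereas any transient vertex path that must ``get past'' $v$ either pays $m(v)$ to pass through $v$ or pays on the order of $\deg v$ small steps to go around it in the link of $v$. In a conformal metric a vertex neighbourhood is merely a bounded topological disk and has no intrinsic width; combinatorially it is $\deg v$ wide. To close the loop one has to rule out \emph{accumulation} of this effect: an admissible conformal metric on $S(\mathcal{T})$ must not be able to route a short transient curve past infinitely many high\nobreakdash-degree vertices unless there is also a cheap transient vertex path. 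Equivalently, one needs a regularity statement -- that the extremal (or a near\nobreakdash-extremal) metric for $\Gamma_{\infty}$ is \emph{combinatorially tame}, distributing its mass across the cells commensurately with the star averages. This is the very phenomenon that forced the bounded\nobreakdash-degree hypothesis on Beardon and Stephenson, and I expect that beating it will require new geometric input about how high\nobreakdash-degree vertices can be arranged in a plane triangulation, not a further refinement of the extremal\nobreakdash-length calculus.

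A complementary route I would keep open is to sidestep VEL and instead compare $S(\mathcal{T})$ directly with the carrier of the maximal circle packing of $\mathcal{T}$, both being geometrizations of the same combinatorial datum; the identity on combinatorics is affine on each face but fails to be quasiconformal exactly at high\nobreakdash-degree vertices, where the Ring Lemma controls face shapes only in terms of degree. One would then try to prove that this failure is confined to a set of zero conformal capacity and hence does not disturb the conformal type, which -- combined with the already known equality of CP\nobreakdash- and VEL\nobreakdash-type -- would again yield the conjecture. Either way, the obstacle is the same: transferring uncontrolled conformal distortion at vertices of unbounded degree into a purely combinatorial estimate.
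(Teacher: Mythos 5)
The statement you are addressing is stated in the paper as a \emph{Conjecture}, and the paper supplies no proof: the author explicitly notes that for unbounded degree plane triangulation graphs it ``remains an open question as to whether the EQ-type coincides with, say, the EEL- or the VEL-type, or perhaps neither.'' Your proposal, as you yourself candidly acknowledge, is likewise not a proof. You set up the right comparison (classical modulus of the transient curve family on $S(\mathcal{T})$ versus the vertex modulus $1/\mathrm{VEL}(\mathcal{T})$), and your two transfer constructions --- star-averaging a conformal metric to get a vertex metric, and planting model disk metrics through the power charts to go the other way --- are the natural ones. But the admissibility step in each direction is exactly where the argument breaks for unbounded degree, and you identify this honestly: a transient curve can skirt a high-degree vertex along the thin collar of its star at negligible conformal length, while any vertex path must pay either $m(v)$ or on the order of $\deg v$ steps in the link. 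Naming the obstruction is not the same as overcoming it, so the conjecture is not established by what you have written.

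That said, your analysis is consistent with, and slightly more detailed than, what the paper itself records. The paper's remark that for \emph{bounded} degree the EQ- and CP-types coincide via a uniformly quasiconformal map from the equilateral faces of $|\mathcal{T}|_{\mathrm{eq}}$ to the geodesic faces of the carrier is precisely your ``complementary route,'' with the Ring Lemma controlling face shapes in terms of degree and the He--Schramm theorem ($\mathrm{CP} = \mathrm{VEL}$) closing the loop. Your suggestion of confining the quasiconformal failure at high-degree vertices to a set of zero conformal capacity is a plausible direction, but it would require a new quantitative input (a regularity or tameness statement for near-extremal metrics, or a structural restriction on how high-degree vertices can accumulate in a plane triangulation) that neither you nor the paper supplies. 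In short: correct diagnosis, no cure; the statement remains a conjecture.
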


A great reference for various expressions of discrete type and their stability under subdivision is Bill Wood's doctoral thesis~\cite{bW06} and the subsequent article~\cite{wW10}. I now turn our attention to Koebe's original inspiration for his circle packing theorem, his interest in circle domains, uniformization, and the Kreisnormierungsproblem.

\subsection{Koebe uniformization for countably-connected domains.}\index{Koebe uniformization conjecture!for countably-connected domains}
Zheng-Xu He and Oded Schramm's work on circle packing in the late nineteen-eighties and early nineteen-nineties led them to a study of Koebe's Uniformization Conjecture. Though the discrete circle packing tools they developed and used did not directly apply to Koebe's problem, the perspective they had gained turned out to be useful. By 1992-93, they had made the greatest advance on Koebe's problem since its articulation and had proved a circle packing version that greatly generalized the Discrete Uniformization Theorem. Their work is detailed in the \textsl{Annals of Mathematics} article \textit{Fixed points, Koebe uniformization, and circle packings}. The proofs are rather intricate and so I am content to state the two main results without any indication of the proofs, leaving it to the interested reader to peruse~\cite{HS93} for details.

\begin{HSKU}[He and Schramm~\cite{HS93}, Schramm~\cite{oS95}]
Every countably connected domain in the Riemann sphere is conformally homeomorphic to a circle domain. Moreover, the circle domain is unique up to M\"obius transformations and every conformal automorphism of the circle domain is the restriction of a M\"obius transformation.
\end{HSKU}

A \textit{domain triangulation graph} is the $1$-skeleton of a simplicial triangulation of a planar domain. 

\begin{HSDU}[He and Schramm~\cite{HS93}]
Every domain triangulation graph with at most countably many ends has a univalent circle packing in the plane $\mathbb{C}$ whose carrier is a circle domain. Moreover, the circle packing is unique up to M\"obius transformations.
\end{HSDU}

He and Schramm prove a theorem that generalizes their Uniformization Theorem to \textit{generaized domains} and \textit{generalized circle domains}. This more general unifomization theorem then is used to give a quick proof of their Discrete Uniformization Theorem. 

I'll close this section by mentioning that Schramm in a 1995 paper~\cite{oS95} introduced the notion of \textit{transboundary extremal length} that generalizes the classical extremal length of curve families. Transboundary extremal length is more suited to path families in multiply connected domains that allow for the curves of the family to pass through the complementary components of the domain. Using this tool, Schramm gives a short proof of Koebe uniformization of countably connected domains and generalizes it in two ways. First, he shows that circle domains as the target of uniformization may be replaced by more general domains, namely, those where the complementary components are what he calls \textit{$\tau$-fat sets}. Second, he shows that some domains with uncountably many complementary components may be uniformized to circle domains, namely those where the complementary components are uniformly fat. This includes for example domains whose boundary components are points and $\mu$-quasicircles for a fixed constant $\mu\geq 1$.

\section{Some Theoretical Applications.}
The theoretical work in circle packing has grown up hand-in-hand with various applications. In the past score of years, the needs of computer imaging have added a practical bent to the applications with the use of the theory for everything from medical imaging to 3D-printer head guidance. This has been one of the impetuses for the development of the discipline of discrete differential geometry with discrete conformal geometry as but one of its chapters. Circle packing theory \`a la Thurston as described in this article is one flavor of this, but several groups of computational geometers and computer scientists have developed discrete conformal geometry in a great variety of ways, with new techniques designed to solve both practical and theoretical problems. The discipline has grown to a vast enterprise too large and complicated for a review of this type. Rather than attempt a thorough discussion of these applications, I'll only mention a couple of the theoretical applications. The first  stands as one of the linchpins of the discipline, and the second generalizes the first. I'll leave it for the interested reader to peruse the many resources available to learn of the state of the art today in practical applications.

\subsection{Approximating the Riemann mapping.}\label{Section:DRMT}
The event that really got circle packing launched, piquing the interest of a small group of research mathematicians from as diverse fields as complex function theory, combinatorial and computational geometry, geometric topology, and the classical theory of polyhedra, was Bill Thurston's address entitled \textit{The Finite Riemann Mapping Theorem} at Purdue University in 1985. He presented there an algorithm for computing discrete versions of the Riemann mapping of a fixed, proper, simply connected domain in the complex plane $\mathbb{C}$ to the unit disk $\mathbb{D}$, with an indication of why the discrete mappings should converge to a conformal homeomorphism of the domain onto $\mathbb{D}$. Burt Rodin and Dennis Sullivan published in~\cite{RS87} a proof of Thurston's claims in 1987, and this began a steady output of published research on circle packings that continues today. Here I review the content of Thurston's 1985 talk and explain the Rodin-Sullivan verification of Thurston's claims.

Thurston's algorithm is illustrated nicely in the graphics of ~\Cref{Figure:DCM}.
\begin{figure}
\captionsetup[subfigure]{labelformat=empty}
\begin{subfigure}[b]{0.35\textwidth}
	\includegraphics[width=\textwidth]{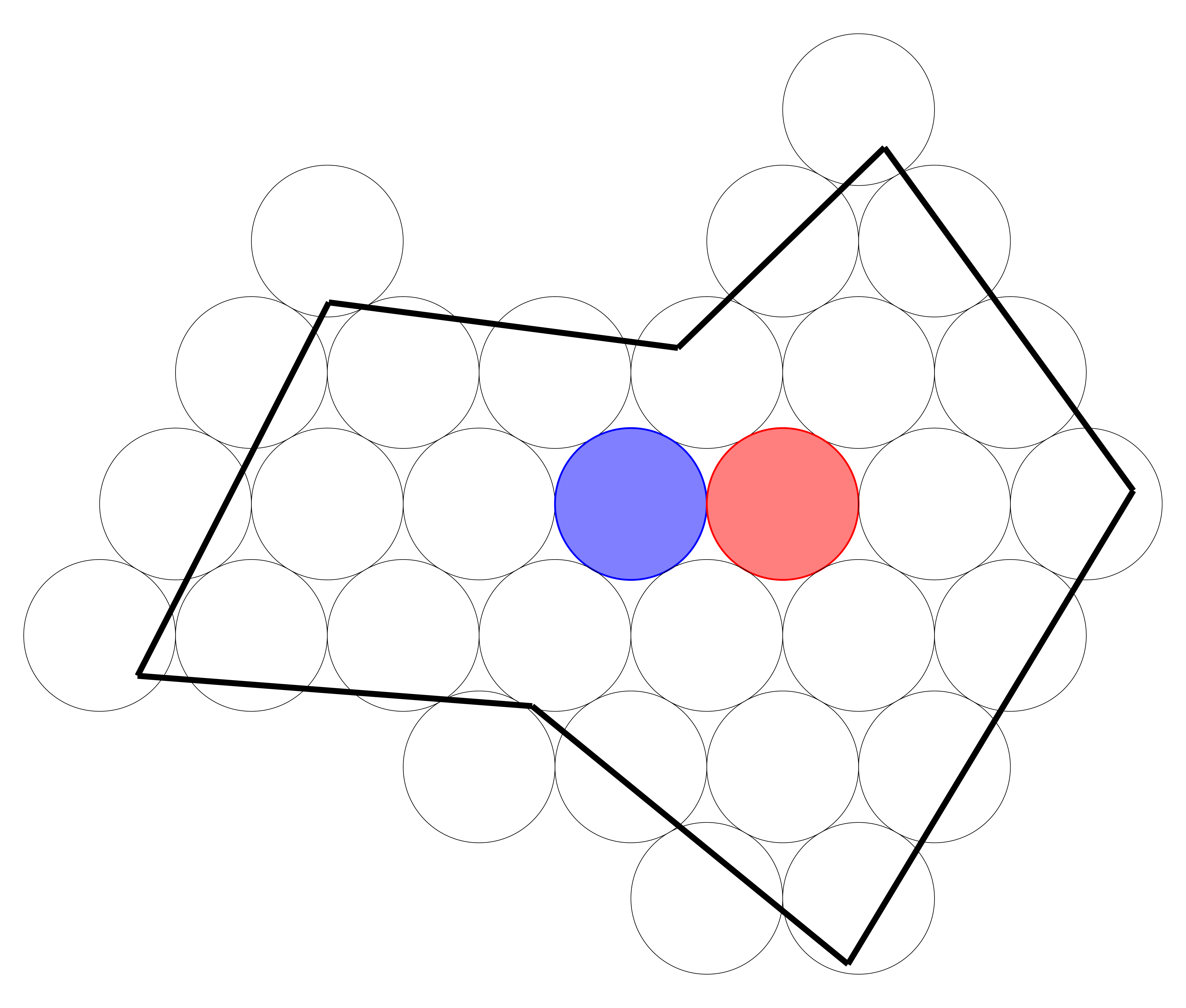}
\caption{}
\end{subfigure}
\quad
\begin{subfigure}[b]{0.35\textwidth}
	\includegraphics[width=\textwidth]{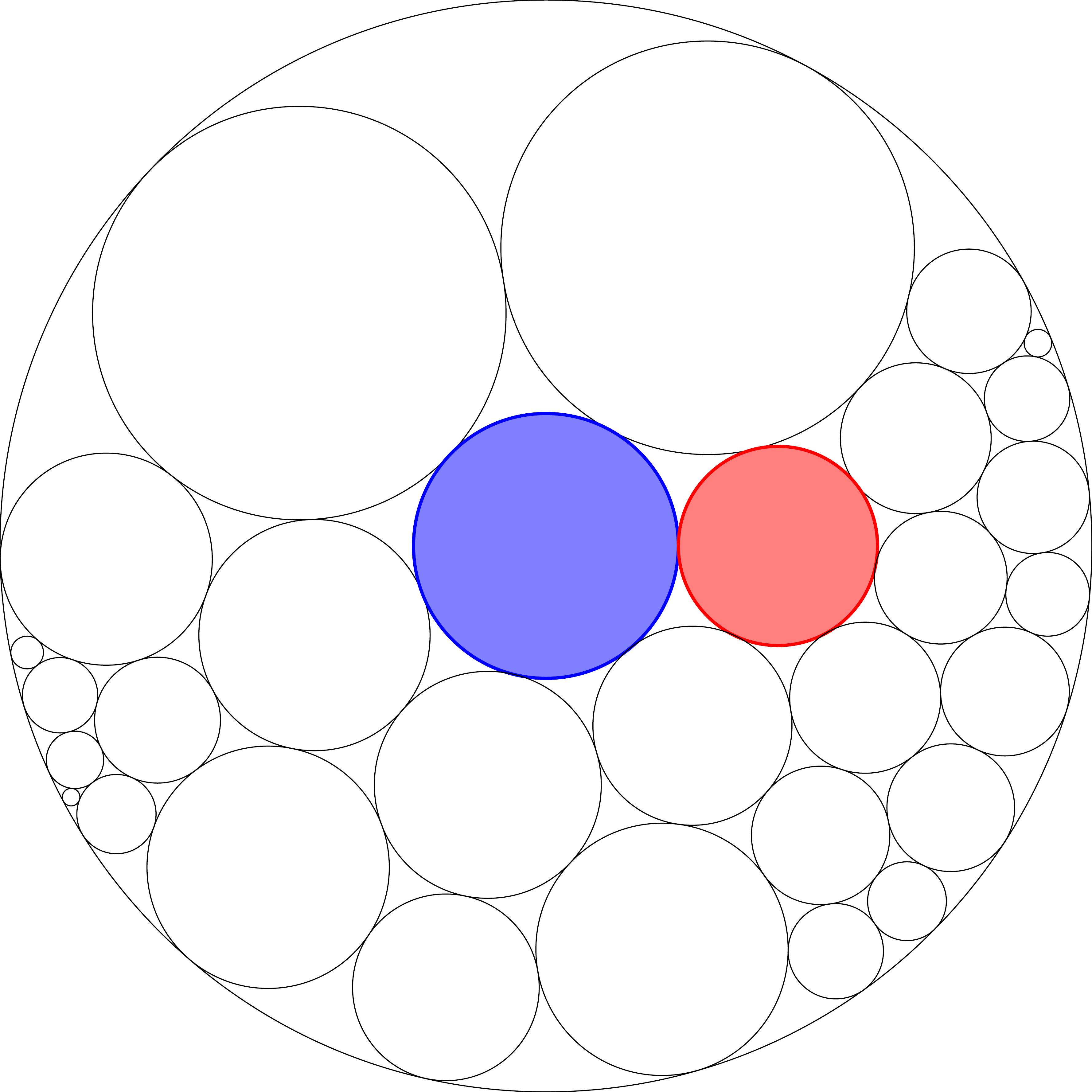}
\caption{}
\end{subfigure}

\begin{subfigure}[b]{0.35\textwidth}
	\includegraphics[width=\textwidth]{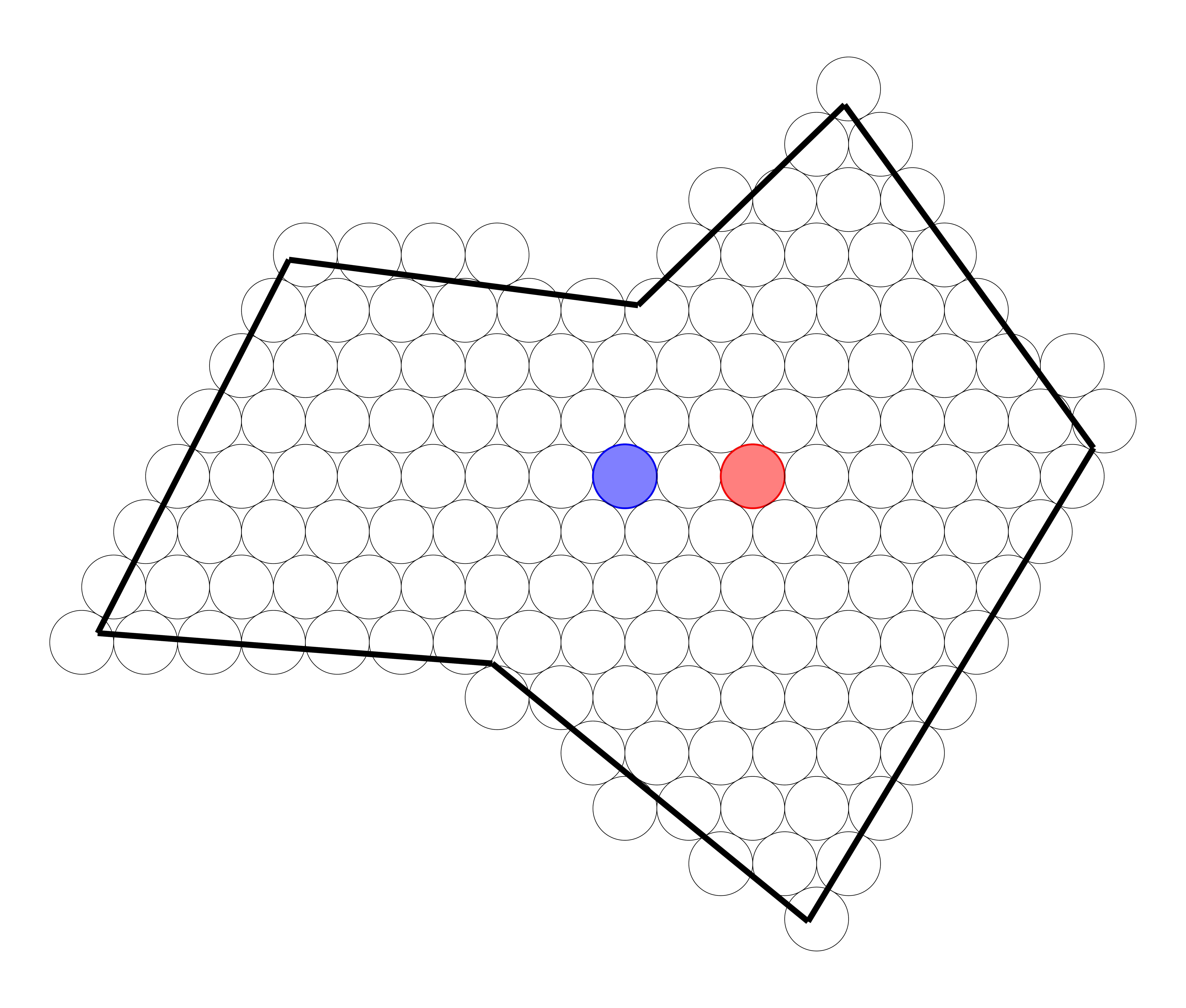}
\caption{}
\end{subfigure}
\quad
\begin{subfigure}[b]{0.35\textwidth}
	\includegraphics[width=\textwidth]{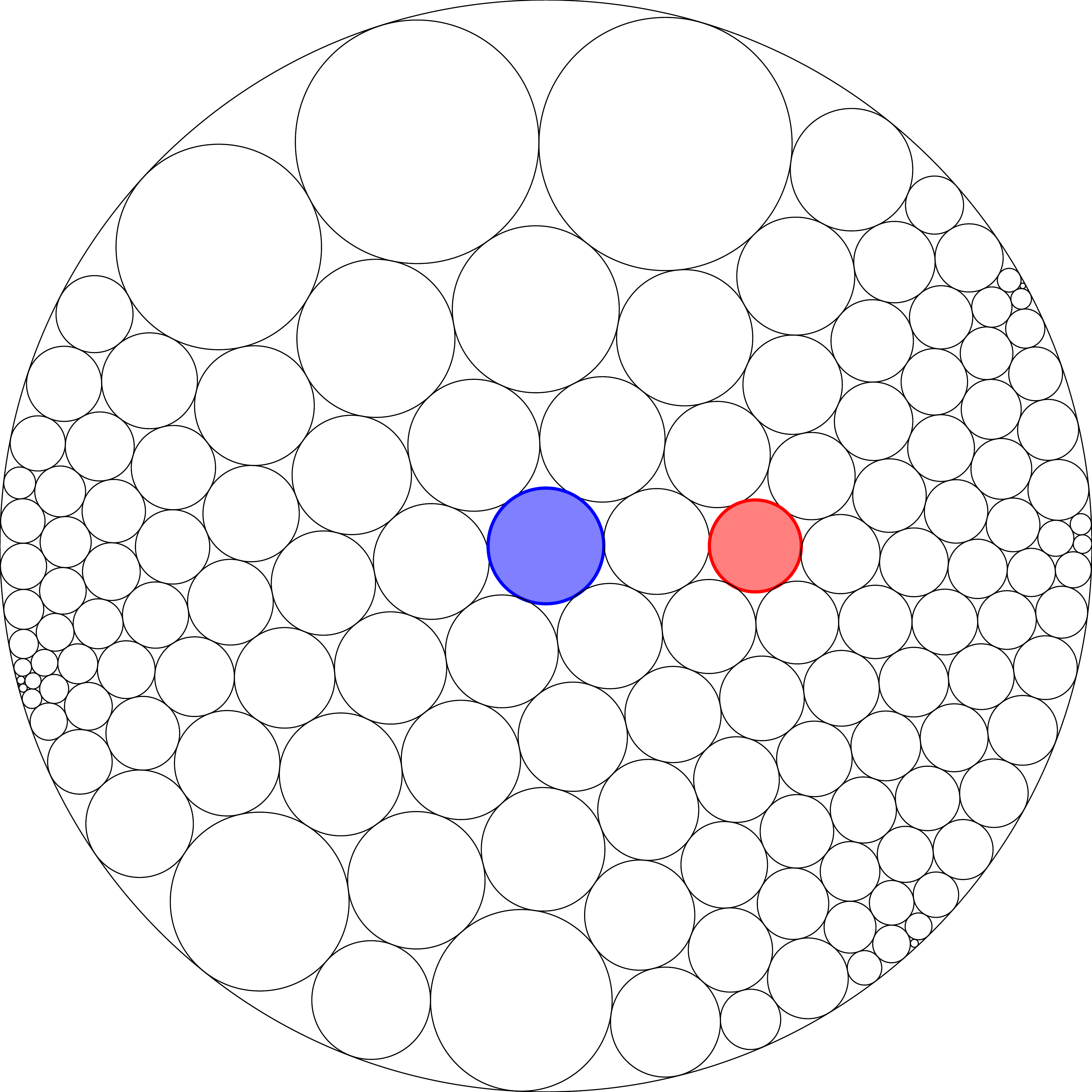}
\caption{}
\end{subfigure}
\begin{subfigure}[b]{0.35\textwidth}
	\includegraphics[width=\textwidth]{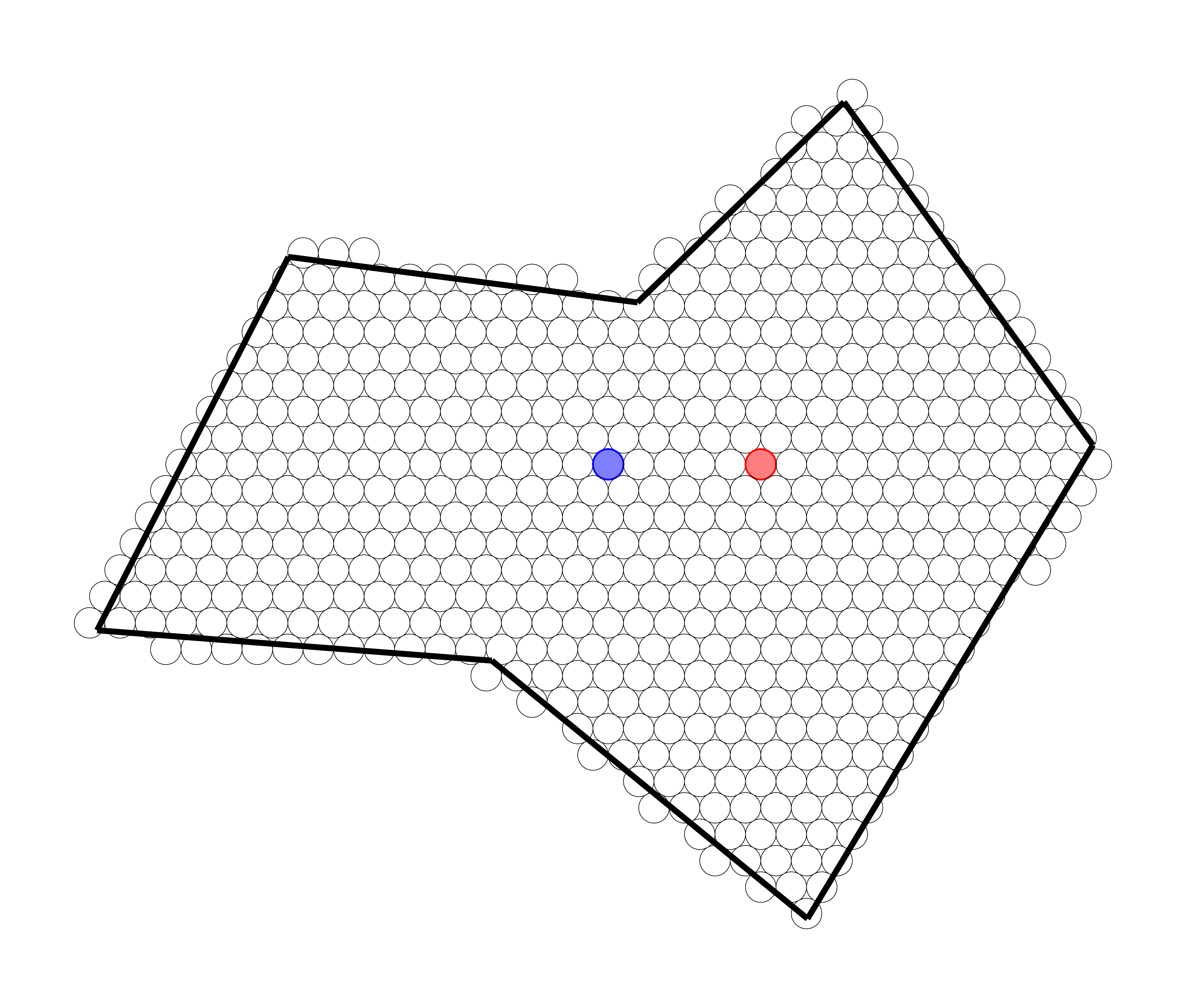}
\caption{}
\end{subfigure}
\quad
\begin{subfigure}[b]{0.35\textwidth}
	\includegraphics[width=\textwidth]{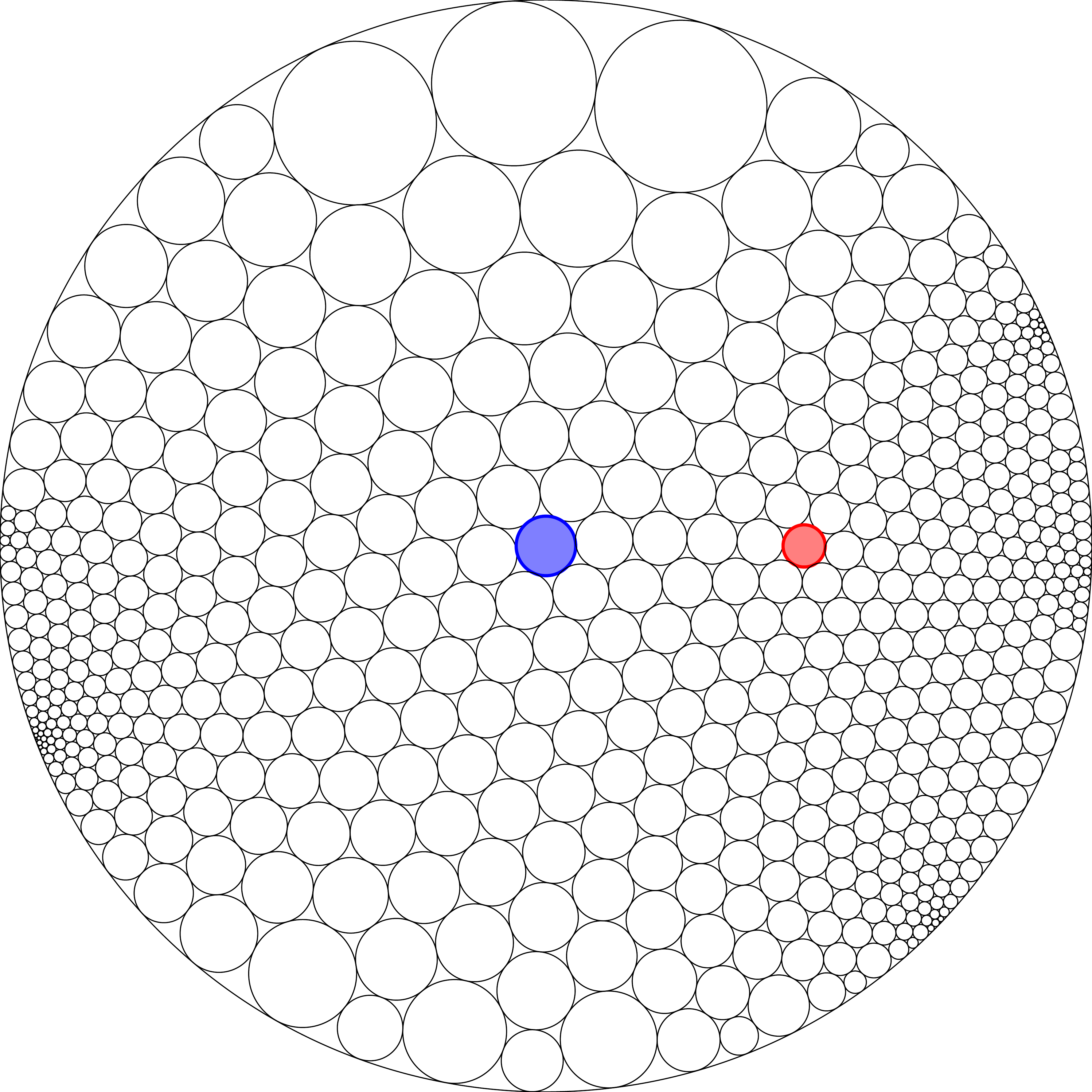}
\caption{}
\end{subfigure}
\caption{Discrete Riemann mappings with finer and finer hexagonal circle packings.}
\label{Figure:DCM}
\end{figure}
The scheme is rather simple. Overlay a domain $D$ with a hexagonal circle packing $\mathcal{H}_{\varepsilon}$ of constant circle radii $\varepsilon$, a `penny packing.'  Use the domain $D$ as a cookie cutter to cut out a portion of the packing, say $\mathcal{P}_{\varepsilon}$, whose combinatorics are given by the simplicial complex $T_{\varepsilon}$. Apply the Maximal Disk Packing Theorem to obtain a maximal circle packing $\mathcal{Q}_{\varepsilon}$ of the disk $\mathbb{D}$. Choosing two points $x$ and $y$ in the domain $D$, let $u_{\varepsilon}$ and $v_{\varepsilon}$ be the vertices of $T_{\varepsilon}$ whose corresponding circles are closest to the respective points $x$ and $y$. From the uniqueness of the Maximal Disk Packing Theorem, one may assume that the packings $\mathcal{Q}_{\varepsilon}$ have been normalized so that the circle corresponding to $u_{\varepsilon}$ is centered at the origin and the one corresponding to $v_{\varepsilon}$ is centered on the positive real axis. Define the \textit{discrete Riemann mapping}\index{discrete Riemann mapping} $ f_{\varepsilon} : \mathrm{carr}(\mathcal{P}_{\varepsilon}) \to \mathrm{carr}(\mathcal{Q}_{\varepsilon})$ as the piecewise linear mapping that takes centers of circles of $\mathcal{P}_{\varepsilon}$ to corresponding centers of circles of $\mathcal{Q}_{\varepsilon}$. Thurston's claim of his 1985 lecture that Rodin and Sullivan verified in 1987 is the content of the next theorem.

\begin{DRMT}[Rodin and Sullivan~\cite{RS87}]
The mappings $f_{\varepsilon}$ converge as $\varepsilon \downarrow 0$, uniformly on compact subsets of $D$, to the Riemann mapping $f$ of $D$ onto $\mathbb{D}$ with $f(x) = 0$ and $f(y) > 0$.
\end{DRMT}

Before I discuss the proof, I should say that there is nothing special about the hexagonal combinatorics. He and Schramm~\cite{HS96} verified that the particular combinatorics of the overlay packings are irrelevant as long as the maximum circle radii approach zero.

\begin{proof}[Sketch of proof.]
The proof applies classical tools from quasiconformal analysis to confirm convergence of the discrete mappings to the Riemann mapping. There are three parts. First, the Ring Lemma, already used on page~\pageref{RingLemma} in the proof of the Discrete Uniformization Theorem, is used to observe that the discrete Riemann mappings $f_{\varepsilon}$ for $\varepsilon > 0$ form a  family of uniformly quasiconformal mappings with, say, dilatation of all maps bounded by $\mu \geq 1$. Second, standard results of quasiconformal analysis imply that the mappings converge to a $\mu$-quasiconformal mapping $f$ of $D$ onto $\mathbb{D}$. Third, the limit mapping is proved to be $1$-quasiconformal, or just conformal, so that it is a Riemann mapping of the domain $D$ onto the disk $\mathbb{D}$. Allow me to fill in each of the three parts of the argument a bit.

The first part, that the discrete Riemann mappings have quasiconformal distortion uniformly bounded, uses the fact that simplicial homeomorphisms are $\mu$-quasiconformal with the distortion constant $\mu$ depending only on the shapes of the triangles involved. In particular, because the complexes $T_{\varepsilon}$ have constant degree six on interior vertices, the Ring Lemma implies that there is a minimum possible angle $\omega > 0$ for any of the triangles in the Euclidean carrier $\mathrm{carr}(\mathcal{Q}_{\varepsilon})$, this independent of $\varepsilon$. This implies that the discrete maps $f_{\varepsilon}$ are uniformly $\mu$-quasiconformal since the images of the equilateral triangles of $\mathrm{carr}(\mathcal{P}_{\varepsilon})$ are triangles of $\mathrm{carr}(\mathcal{Q}_{\varepsilon})$ of uniformly bounded distortion. 

The second part now follows from standard tools of quasiconformal analysis. The uniformly quasiconformal maps $f_{\varepsilon}$ are equicontinuous on compact subsets of $D$, as are the maps $f_{\varepsilon}^{-1}$ on compact subsets of $\mathbb{D}$. It follows that the family $\{f_{\varepsilon}\}_{\varepsilon >0}$ is a normal family and any limit mapping $f$ is bijective between $D$ and $\mathbb{D}$. This latter claim uses the fact that any limit mapping is necessarily $\mu$-quasiconformal, and the Carath\'eodory Kernel Theorem implies that $f$ takes $D$ onto $\mathbb{D}$.

Finally, that any limit mapping $f$ is conformal follows from the \textit{Hexagonal Packing Lemma}. This says that in a packing with hexagonal combinatorics, any two adjacent circles buried deeply within the packing have nearly equal radii. Here is the exact statement.

\begin{HPL}[Rodin and Sullivan~\cite{RS87}]
There is a sequence $c_{n}$ decreasing to zero as $n\to \infty$ such that in any packing with $n$ generations of the regular hexagonal combinatorics surrounding circle $C$, the ratio of radii of $C$ and any adjacent circle differs from unity by less than $c_{n}$. 
\end{HPL}

This lemma shows that as $\varepsilon \downarrow 0$, the mappings $f_{\varepsilon}$ restricted to a fixed compact subset of $D$ maps equilateral triangles to triangles of $\mathrm{carr}(\mathcal{Q}_{\varepsilon})$ that become arbitrarily close to equilateral, and this implies that any limit mapping is conformal.

This completes the proof modulo the proof of the Hexagonal Packing Lemma. This is proved as follows. Let $H_{n}$ be any packing of circles in the plane with combinatorics given by greater than or equal to $n$ generations of the hexagonal packing and whose central circle is the circle $C_{0}$ of unit radius centered at the origin. The Ring Lemma implies that the radii of the circles $n$ generations removed from $C_{0}$ in the packings $H_{m}$ for $m\geq n$ are bounded away from zero and infinity. A diagonal argument implies that there is a subsequence $H_{n_{i}}$ that geometrically converges to a packing $H$, which necessarily has hexagonal combinatorics. But the uniqueness of the Discrete Uniformization Theorem implies that $H = \mathcal{H}_{1}$, the penny packing of unit radius. If the lemma were not true, one could choose the sequence $H_{n}$ in such a way that the ratio of the center circle of $H_{n}$ to at least one of its neighbors differs from unity by at least a fixed constant $\delta > 0$. This would imply that the limit packing $H$ has a circle adjacent to $C_{0}$ of non-unit radius, contradicting uniqueness.
\end{proof}
I should mention that Rodin and Sullivan did not have access to the Discrete Uniformization Theorem in 1987 as it was published only in 1990. They had to prove uniqueness of the penny packing of the plane, which they did by invoking results of Dennis Sullivan~\cite{dS81} extending the Mostow Rigidity Theorem to non-compact three-manifolds whose volumes grow slowly enough. This initiated an attempt to prove the Hexagonal Packing Lemma using only elementary means, which ultimately led to a better understanding of the rigidity of infinite circle packings over the next decade. This paper of Rodin and Sullivan was highly influential and can claim to be the genesis of the serious study of circle packings that now includes in its accomplishments hundreds of articles, thousands of citations, and a huge reservoir of applications in a great variety of different settings.

\subsection{Uniformizing equilateral surfaces.}\index{equilateral surface}
I already have defined piecewise equilateral metrics determined by plane triangulation graphs in the context of the type problem. Of course there is nothing special about plane triangulation graphs. Any triangulation $T$ of a surface may be endowed with a piecewise equilateral metric by identifying faces with unit equilateral triangles. Exactly as explained in Section~\ref{Section:type}, this endows the surface with a complex atlas of conformal charts indexed by the vertices and edges of the triangulation. Equilateral surfaces have become important in several different areas of mathematics. They arise for example in Grothendieck's theory of dessins d'enfants and their corresponding Bely{\u \i} maps, see~\cite{BS04}, in Angel and Schramm's theory of uniform infinite planar triangulations~\cite{AS03}, in Gill and Rohde's study of random planar maps~\cite{GR13}, in Bowers and Stephenson's theory of conformal tilings and especially those that arise from expansion complexes~\cite{BS14a,BS14b}, and in discrete conformal flattening of surfaces in $\mathbb{R}^{3}$~\cite{Bowers:2003kr}. In this section I introduce a method of uniformizing these surfaces using the tools of Rodin-Sullivan~\cite{RS87} and basic surface theory.

Let $T$ be a triangulation of the topological surface $S$. The notation $|T|_{\text{eq}}$ is used to denote the piecewise equilateral metric space determined by the triangulation $T$ and $\mathcal{S}_{T}$ to denote the Riemann surface determined by the atlas $\mathcal{A} = \{ \varphi_{x} : x \in V(T) \cup E(T)\}$. Note that $T$ need not be a simplicial triangulation for this to make sense. A face $f$ of $T$ first is identified as an equilateral triangle in $|T|_{\text{eq}}$ and then as a curvilinear triangle in the canonical metric of constant curvature on the surface $\mathcal{S}_{T}$. What is the shape of $f$ in $\mathcal{S}_{T}$? One fact about the shape of this curvilinear triangle is that the angle that two of its sides makes that emanate from the same vertex is $2\pi /d$, where $d$ is the degree of the vertex. Another fact is that the sides are analytic arcs, and in fact any such arc is the fixed point set of an anti-conformal reflection that exchanges the two triangles incident with that arc. In the case $\mathcal{S}_{T}$ is parabolic or hyperbolic, $f$ can be lifted to the plane $\mathbb{C}$ or the Poincar\'e disk $\mathbb{D}$ and so this shape may be displayed as a curvilinear triangle in the plane. In case $\mathcal{S}_{T}$ is elliptic, this shape may be stereographically projected from the $2$-sphere to the plane. How does one get at this shape? The answer Ken Stephenson and I supplied in~\cite{BS04} is the content of this section.

For simplicity, let's restrict our attention to closed surfaces. The scheme for approximating a uniformizing map is to use the triangulation $T$ as a pattern for a circle packing, and then refine iteratively using so-called \textit{hex-refinement} to obtain a sequence $\mathcal{P}_{n}$ of finer and finer circle packings, after an initial barycentric subdivision. Hex-refinement applied to a triangular face just adds a vertex to each existing edge and then connects the three new vertices on the three edges of the face by a $3$-cycle of edges, thus subdividing the face into four smaller triangles.
%; see~\Cref{Figure:Hex}.
%\begin{figure}
%\caption{Hex-subdivision of a face applied once and four times.}
%\label{Figure:Hex}
%\end{figure}
Thus barycentric subdivision followed by hex-refinement produces $T_{1}$, and iteration of hex-refinement then produces the sequence $T_{n}$ with $\mathcal{P}_{n}$ the corresponding circle packing in the surface $\mathcal{S}_{n}$ in the pattern of $T_{n}$. There is an added layer of difficulty here in that, unlike with the use of the hexagonal packing in the Discrete Riemann Mapping Theorem, the circle packings in this setting do not occupy the same surface. The surfaces $\mathcal{S}_{n}$ are determined by the triangulations $T_{n}$ according to Theorem~\ref{Theorem:SurfacesPack}, and these need not be conformally equivalent to one another. Also, any face $f$ of $T$ with $nth$ hex-subdivision $f_{n}$ in $T_{n}$ determines a sequence $\mathcal{P}_{n}(f)$ of circle packings, those circles in $\mathcal{P}_{n}$ corresponding to the vertices of $f_{n}$.

\begin{DUTES}[Bowers and Stephenson~\cite{BS04}]
The surfaces $\mathcal{S}_{n}$ converge in moduli as $n\to \infty$ to a surface $\mathcal{S}$ that is conformally homeomorphic to the surface $\mathcal{S}_{T}$, the Riemann surface determined by the equilateral surface $|T|_{\text{\em eq}}$. For any face $f$ of $T$, the carriers of $\mathcal{P}_{n}(f)$ converge geometrically to the shape of $f$ in $\mathcal{S}_{T}$ when given its canonical constant curvature metric.
\end{DUTES}

The latter statement of the theorem may be understood to mean that when one lifts the carriers to the universal cover, the sphere $\mathbb{S}^{2}$, the plane $\mathbb{C}$, or the disk $\mathbb{D}$, and normalizes appropriately, the carriers converge in the Hausdorff metric on compacta to the appropriate lift of $f$ in $\mathcal{S}_{T}$.

\begin{proof}[Sketch of proof.]
Note that the realizations of the triangulation $T$ in the metric surface $|T|_{\text{eq}}$ and in the Riemann surface $\mathcal{S}_{T}$ are reflective, meaning that each edge $e$ is the fixed point set of an anti-conformal reflection that exchanges the two faces contiguous to $e$.\footnote{To be clear, the reflection is anti-conformal on the \textit{interior} of the union of the two faces incident at $e$, but not at the vertices.} Rather than the canonical constant curvature metric, I shall use the piecewise equilateral metric $\rho_{T}$ on $\mathcal{S}_{T}$ throughout the proof. Here is a key observation. Hex-subdivision may be performed metrically in $\mathcal{S}_{T}$ by adding new vertices $v(e)$ as the mid-points of the edges $e\in E(T)$ and connecting $v(e)$ to $v(e')$ by a Euclidean straight line segment in the metric $\rho_{T}$ in the face bounded by edges $e$, $e'$ and $e''$. This realizes the hex-refined triangulation $T_{1}$ as a reflective triangulation in $\mathcal{S}_{T}$.\footnote{Technically, this is after the initial barycentric subdivision, which also is performed in the metric $\rho_{T}$ and yields a reflective triangulation.} Iterating, $T_{n}$ may be realized as a reflective triangulation of $\mathcal{S}_{T}$ that metrically hex-subdivides $T_{n-1}$.

Define homeomorphisms $h_{n} : \mathcal{S}_{T} \to \mathcal{S}_{n}$ so that the image of vertex $v$ of $T_{n}$ under $h_{n}$ is the center of the circle that corresponds to $v$ in the circle  packing $\mathcal{P}_{n}$, extend linearly along edges and then with minimum quasiconformal distortion across faces. By the Ring Lemma, each mapping $h_{n}$ is quasiconformal, and since hex-refinement does not increase degree, any bound $\geq 6$ on the degrees of the vertices of $T$ also bounds the degrees of the vertices of $T_{n}$, for all $n \geq 1$. This implies that the homeomorphisms $h_{n}$ have uniformly bounded dilatations, and this implies that a subsequence of the surfaces $\mathcal{S}_{n}$ converges in moduli to a Riemann surface $\mathcal{S}$.  

My claim is that $\mathcal{S}$ is conformally equivalent to $\mathcal{S}_{T}$. This would be confirmed were the maximum dilatations of the homeomorphisms $h_{n}$ shown to limit to unity as $n\to \infty$, but unfortunately this does not occur. In fact these dilatations are bounded away from unity with large dilatations concentrated near the original vertices of $T$. To get around this, let $D$ be a compact domain in $\mathcal{S}_{T}$ disjoint from the vertex set $V(T)$. Note that the combinatorics of $T_{n}$ away from the vertices of $T$ is hexagonal, and this implies that as $n\to \infty$, the compact set  $D$ is surrounded by a number of generations of the hexagonal combinatorics that increases without bound. The Hexagonal Packing Lemma applies to confirm that the maximum dilatations of the restrictions of the homeomorphisms $h_{n}$ to $D$ converge to unity. This works for every compact domain that misses the vertex set $V(T)$, and this implies that the limit mapping $h :\mathcal{S}_{T} \to \mathcal{S}$ is conformal on the complement of the vertex set $V(T)$. Now the removability of isolated singularities comes into play and implies that the homeomorphism $h$ is conformal at the vertices, and so is a conformal homeomorphism of $\mathcal{S}_{T}$ onto $\mathcal{S}$.
\end{proof}

\begin{figure}
\includegraphics[width = 0.7\textwidth]{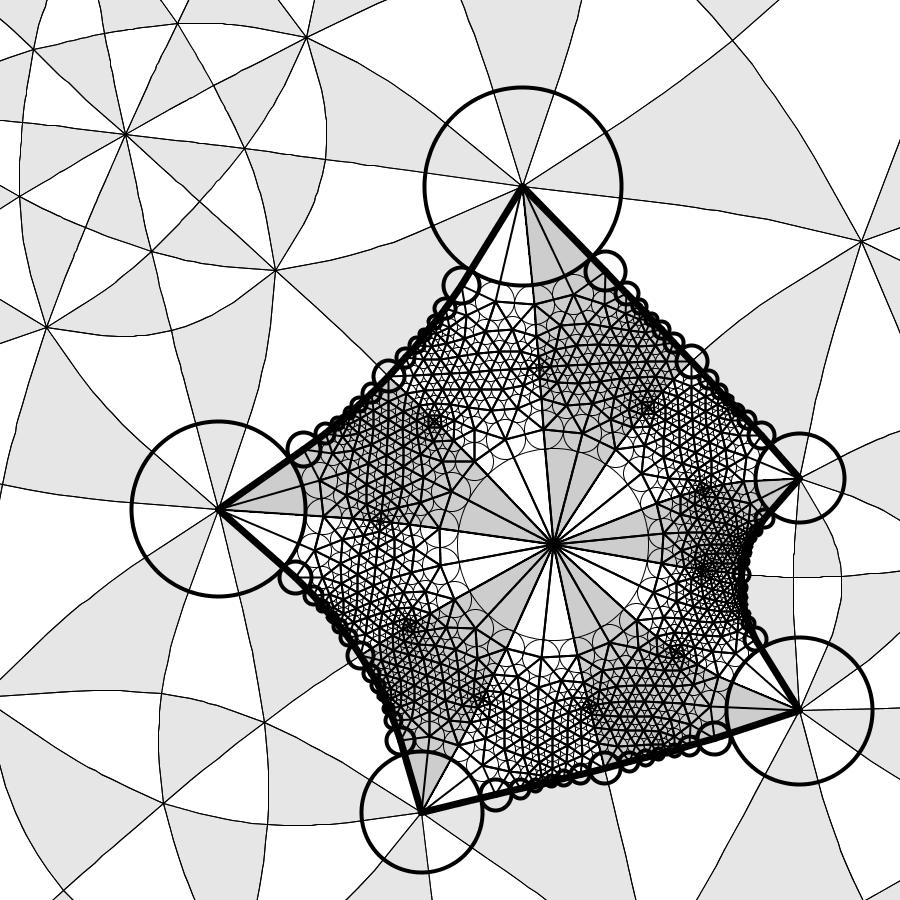}
\caption{Conformal shapes of equilateral triangles in a planar equilateral surface approximated with the circle packing of the twice hex-refined barycentric subdivision of the original triangulation.}
\label{Figure:EQSurfaces}
\end{figure}
\Cref{Figure:EQSurfaces} shows an example of an approximation to a portion of an equilateral surface uniformized in the plane. In this figure each edge is the fixed point set of an anti-conformal reflection that exchanges the grey-white pair of triangles sharing that edge. This is an approximation of the conformally correct shapes of the equilateral triangles forming the equilateral surface being imaged.

%\subsection{Discrete minimal surfaces.}

\section{Inversive Distance Circle Packings.}\index{circle packing!inversive distance} Around 2001, Ken Stephenson and I began thinking about \textit{inversive distance circle packings} and how they could be used to uniformize piecewise flat surfaces, those surfaces in which each face is identified with a flat Euclidean triangle, not necessarily equilateral. There is a tentative discussion of this in~\cite{BS04} and further discussion in~\cite{Bowers:2003kr} of the difficulties in proving convergence of discrete mappings to the uniformization mapping, though the method does seem to work well in practice; again see~\cite{Bowers:2003kr}. The first theoretical questions concern (1) the existence of circle packings with prescribed inversive distances between adjacent circles and (2) the rigidity and uniqueness of these packings. 

When all inversive distances lie in the unit interval, adjacent circles overlap with specified angle $0\leq \theta \leq \pi/2$. This is covered by the Koebe-Andre'ev-Thurston Theorems. When inversive distance is greater than unity, the circles do not overlap and the inversive distance is a M\"obius-invariant measure of how separated the circles are. In this case Problems (1) and (2) seem much more difficult to approach. Problem (1) is especially difficult in that there are local assignments of inversive distances that must be avoided as there are no circle configurations that realize those distances. These are difficult to catalogue, but even if there are no local obstructions to the existence of a packing, it is not at all clear whether still there may be global obstructions. Little progress has been made on Problem (1), but the situation for Problem (2) has enjoyed some progress, initially in 2011 and more recently in the past couple of years. It is these recent successes in approaching Problem (2) that occupies this section. My contention is that a change of viewpoint can be effective in approaching inversive distance circle packings, and a hint as to how to proceed comes from the  classical rigidity theory of bar-and-joint linkages. After a brief review of inversive distance, I will explore this new framework for circle packings and discuss some recent successes.

\subsection{A quick introduction to inversive distance.}
There are a number of ways to define the inversive distance between two circles in the Riemann sphere. I will present several of these below, starting with the most mundane that gives a Euclidean formula for the inversive distance between two planar circles.\footnote{This easily can be extended to the inversive distance between a circle and a line, or two lines. I will forgo this development since the next definition is completely general.} Let $C_{1}$ and $C_{2}$ be distinct circles in the complex plane $\mathbb{C}$ centered at the respective points $p_{1}$ and $p_{2}$, of respective radii $r_{1}$ and $r_{2}$, and bounding the respective \textit{companion disks} $D_{1}$ and $D_{2}$. 

\begin{Definition}[\textsc{inversive distance in the euclidean metric}]
The \textit{inversive distance} $\langle C_{1}, C_{2} \rangle$ between $C_{1}$ and $C_{2}$ is
\begin{equation}\label{EQ:planeID} 
	\langle C_{1}, C_{2} \rangle = \frac{|p_{1}-p_{2}|^{2} - r_{1}^{2} - r_{2}^{2}}{2r_{1}r_{2}}.
\end{equation}
The \textit{absolute inversive distance} between distinct circles is the absolute value of the inversive distance.
\end{Definition}

The absolute inversive distance is a M\"obius invariant of the placement of two circles in the plane. This means that there is a M\"obius transformation of $\mathbb{C}$ taking one circle pair to another if and only if the absolute inversive distances of the two pairs agree. The important geometric facts that make the inversive distance useful in inversive geometry and circle packing are as follows. When $\langle C_{1},C_{2}\rangle > 1$, $D_{1} \cap D_{2} = \emptyset$ and $\langle C_{1},C_{2}\rangle = \cosh \delta$, where $\delta$ is the hyperbolic distance between the totally geodesic hyperbolic planes in the upper-half-space model $\mathbb{C} \times (0,\infty)$ of $\mathbb{H}^{3}$ whose ideal boundaries are $C_{1}$ and $C_{2}$. When $\langle C_{1},C_{2}\rangle = 1$, $D_{1}$ and $D_{2}$ are tangent at their single point of intersection. When $1 >\langle C_{1},C_{2}\rangle \geq 0$, $D_{1}$ and $D_{2}$ overlap with angle $0 < \theta \leq \pi/2$ with $\langle C_{1},C_{2}\rangle = \cos \theta$. In particular, $\langle C_{1},C_{2}\rangle = 0$ precisely when $\theta = \pi/2$. When $\langle C_{1},C_{2}\rangle < 0$, then $D_{1}$ and $D_{2}$ overlap by an angle greater than $\pi/2$. This includes the case where one of $D_{1}$ or $D_{2}$ is contained in the other, this when $\langle C_{1},C_{2}\rangle \leq -1$. In fact, when $\langle C_{1},C_{2}\rangle < -1$ then $\langle C_{1},C_{2}\rangle = - \cosh \delta$ where $\delta$ has the same meaning as above, and when $\langle C_{1},C_{2}\rangle = -1$ then $C_{1}$ and $C_{2}$ are `internally' tangent. When $-1 < \langle C_{1},C_{2}\rangle <0$, then the overlap angle of $D_{1}$ and $D_{2}$ satisfies $\pi > \theta > \pi/2$ and again $\langle C_{1},C_{2}\rangle = \cos \theta$. 

The more general definition measures the inversive distance between oriented circles. Note that an oriented circle determines a unique closed \textit{companion} or \textit{spanning disk} that the circle bounds. Indeed, assuming fixed orientations for $\mathbb{S}^{2}$ and $\widehat{\mathbb{C}}$ that are compatible via stereographic projection, the companion disk determined by the oriented circle $C$ is the closed complementary disk $D$ (of the two available) whose positively oriented boundary $\partial^{+} D = C$, where of course the orientation of $D$ is inherited from that of $\mathbb{S}^{2}$ or $\widehat{\mathbb{C}}$. This is described colloquially by saying that $D$ lies to the left of $C$ as one traverses $C$ along the direction of its orientation. 

\begin{Definition}[\textsc{general inversive distance}]
Let $C_{1}$ and $C_{2}$  be oriented circles in the extended plane $\widehat{\mathbb{C}}$ bounding their respective companion disks $D_{1}$ and $D_{2}$, and let $C$ be any oriented circle mutually orthogonal to $C_{1}$ and $C_{2}$.  Denote the points of intersection of $C$ with $C_{1}$ as $z_{1},z_{2}$ ordered so that the oriented sub-arc of $C$ from $z_{1}$ to $z_{2}$ lies in the disk $D_{1}$.  Similarly denote the ordered points of intersection of $C$ with $D_{2}$ as $w_{1},w_{2}$.  The \textit{general inversive distance} between $C_{1}$ and $C_{2}$, denoted as $\langle C_{1},C_{2}\rangle$, is defined in terms of the cross ratio
\begin{equation*}
[z_{1},z_{2};w_{1},w_{2}]=\frac{(z_{1}-w_{1})(z_{2}-w_{2})}{(z_{1}-z_{2})(w_{1}-w_{2})}
\end{equation*}
\noindent by
\begin{equation*}
\langle C_{1},C_{2}\rangle =2[z_{1},z_{2};w_{1},w_{2}]-1.
\end{equation*}
Subsequently, I'll drop the adjective \textit{general} and refer to the inversive distance $\langle C_{1}, C_{2} \rangle$ with its absolute value $|\langle C_{1}, C_{2} \rangle |$ the \textit{absolute inversive distance}.\footnote{The author first learned of defining inversive distance in this way from his student, Roger Vogeler. He has looked for this in the literature and, unable to find it, can only surmise that it is original with Prof.~Vogeler. The definition appeared in~\cite{Bowers:2003kr} in 2003.}
\end{Definition}

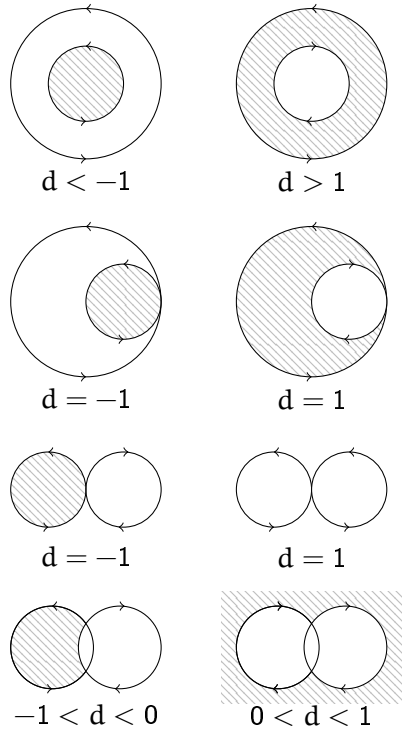
\begin{figure}

\begin{tikzpicture}
   
   \clip (-1.1, -6.5) rectangle (4.3, 3.7);
   
	% Top Left
	\begin{scope}[decoration = {markings, mark = at position 0.25 with {\arrow{>}}, mark = at
     position 0.75 with {\arrow{>}}, }]
     \draw[postaction = decorate] (0,2.4) circle (1.0cm);
   \end{scope}
   
   \begin{scope}[decoration = {markings, mark = at position 0.25 with {\arrow{>}}, mark = at
     position 0.75 with {\arrow{>}}, }]
     \draw[postaction = decorate, pattern=north west lines,  pattern color=lightgray] (0,2.4) circle (0.5cm);
   \end{scope}
   
   \node at (0,1.1) {$d < -1$};
   
   % Top Right
   
   \begin{scope}[decoration = {markings, mark = at position 0.25 with {\arrow{>}}, mark = at
     position 0.75 with {\arrow{>}}, }]
     \draw[postaction = decorate, pattern=north west lines,  pattern color=lightgray] (3,2.4) circle (1.0cm);
   \end{scope}
   
   \begin{scope}[decoration = {markings, mark = at position 0.25 with {\arrow{<}}, mark = at
     position 0.75 with {\arrow{<}}, }]
     \draw[postaction = decorate, fill=white] (3,2.4) circle (0.5cm);
   \end{scope}
   
   \node at (3,1.1) {$d > 1$};

	% 2 Left
   \begin{scope}[decoration = {markings, mark = at position 0.25 with {\arrow{>}}, mark = at
     position 0.75 with {\arrow{>}}, }]
     \draw[postaction = decorate, pattern=north west lines,  pattern color=lightgray] (0.5,-0.5) circle (0.5cm);
   \end{scope}
   
   \begin{scope}[decoration = {markings, mark = at position 0.25 with {\arrow{>}}, mark = at
     position 0.75 with {\arrow{>}}, }]
     \draw[postaction = decorate] (0,-0.5) circle (1.0cm);
   \end{scope}
   
   \node at (0,-1.8) {$d = -1$};
   
   % 2 Right
   
   \begin{scope}[decoration = {markings, mark = at position 0.25 with {\arrow{>}}, mark = at
     position 0.75 with {\arrow{>}}, }]
     \draw[postaction = decorate, pattern=north west lines,  pattern color=lightgray] (3,-0.5) circle (1.0cm);
   \end{scope}
   
   \begin{scope}[decoration = {markings, mark = at position 0.25 with {\arrow{<}}, mark = at
     position 0.75 with {\arrow{<}}, }]
     \draw[postaction = decorate, fill=white] (3.5,-0.5) circle (0.5cm);
   \end{scope}
   
   \node at (3,-1.8) {$d = 1$};
   
	% 3 Left
   \begin{scope}[decoration = {markings, mark = at position 0.25 with {\arrow{<}}, mark = at
     position 0.75 with {\arrow{<}}, }]
     \draw[postaction = decorate] (0.5,-3) circle (0.5cm);
   \end{scope}

   \begin{scope}[decoration = {markings, mark = at position 0.25 with {\arrow{>}}, mark = at
     position 0.75 with {\arrow{>}}, }]
     \draw[postaction = decorate, pattern=north west lines,  pattern color=lightgray] (-0.5,-3) circle (0.5cm);
   \end{scope}
   
   \node at (0,-3.9) {$d = -1$};

   % 3 Right
   
   \begin{scope}[decoration = {markings, mark = at position 0.25 with {\arrow{>}}, mark = at
     position 0.75 with {\arrow{>}}, }]
     \draw[postaction = decorate] (2.5,-3) circle (0.5cm);
   \end{scope}
   
   \begin{scope}[decoration = {markings, mark = at position 0.25 with {\arrow{>}}, mark = at
     position 0.75 with {\arrow{>}}, }]
     \draw[postaction = decorate, fill=white] (3.5,-3) circle (0.5cm);
   \end{scope}
   
   \node at (3,-3.9) {$d = 1$};

	% 4 Left
   
   \begin{scope}[decoration = {markings, mark = at position 0.25 with {\arrow{>}}, mark = at
     position 0.75 with {\arrow{>}}, }]
     \draw[postaction = decorate, pattern=north west lines,  pattern color=lightgray] (-0.45,-5.1) circle (0.55cm);
   \end{scope}
   
   \begin{scope}[decoration = {markings, mark = at position 0.25 with {\arrow{<}}, mark = at
     position 0.75 with {\arrow{<}}, }]
     \draw[postaction = decorate, fill=white] (0.45,-5.1) circle (0.55cm);
   \end{scope}
   
   \draw (-0.45,-5.1) circle (0.55cm);
   
   \node at (0,-6.0) {$-1 < d < 0$};
   
   % 4 Right
   
   \path [pattern=north west lines,  pattern color=lightgray] (1.8,-4.35) rectangle (4.2, -5.85);
   
   \begin{scope}[decoration = {markings, mark = at position 0.25 with {\arrow{<}}, mark = at
     position 0.75 with {\arrow{<}}, }]
     \draw[postaction = decorate, fill=white] (2.55,-5.1) circle (0.55cm);
   \end{scope}
   
   \begin{scope}[decoration = {markings, mark = at position 0.25 with {\arrow{<}}, mark = at
     position 0.75 with {\arrow{<}}, }]
     \draw[postaction = decorate, fill=white] (3.45,-5.1) circle (0.55cm);
     
   \draw[postaction = decorate] (2.55,-5.1) circle (0.55cm);
   \end{scope}
   
   \node at (3,-6.0) {$0 < d < 1$};
   
\end{tikzpicture}

\caption{Inversive distances $d=\langle C_{1}, C_{2} \rangle$. The shaded regions are the intersections $D_{1}\cap D_{2}$, the points common to the spanning disks $D_{1}$ and $D_{2}$ for both circles $C_{1}$ and $C_{2}$.}
\label{fig:ID}
\end{figure}
Recall that cross ratios of ordered $4$-tuples of points in $\widehat{\mathbb{C}}$ are invariant under M\"{o}bius transformations and that there is a M\"obius transformation taking an ordered set of four points of $\widehat{\mathbb{C}}$ to another ordered set of four if and only if the cross ratios of the sets agree.  This implies that which circle $C$ orthogonal to both $C_{1}$ and $C_{2}$ is used in the definition is irrelevant as a M\"{o}bius transformation that set-wise fixes $C_{1}$ and $C_{2}$ can be used to move any one orthogonal circle to another.  Which one of the two orientations on the orthogonal circle $C$ is used is irrelevant as the cross ratio satisfies $[z_{1},z_{2};w_{1},w_{2}]=[z_{2},z_{1};w_{2},w_{1}]$.  This equation also shows that the inversive distance is preserved when the orientation of both circles is reversed so that it is only the relative orientation of the two circles that is important for the definition. In fact, the general inversive distance is a relative conformal measure of the placement of an oriented circle pair on the Riemann sphere. By this I mean that two oriented circle pairs are inversive equivalent if and only if their inversive distances agree. All of this should cause one to pause to develop some intuition about how companion disks may overlap with various values of inversive distances. See~\Cref{fig:ID} for some corrections to possible misconceptions. Finally, the inversive distance is symmetric with $\langle C_{1},C_{2}\rangle  = \langle C_{2},C_{1}\rangle$ since $[z_{1},z_{2};w_{1},w_{2}]=[w_{1},w_{2};z_{1},z_{2}]$.

The inversive distance is real since the cross ratio of points lying on a common circle is real and, in fact, every real value is realized as the inversive distance of some oriented circle pair. Notice that if the orientation of only one member of a circle pair is reversed, the inversive distance merely changes sign.  This follows from the immediate relation $[z_{1},z_{2};w_{2},w_{1}]=1-[z_{1},z_{2};w_{1},w_{2}]$. Despite its name, the inversive distance is not a metric as it fails to be non-negative and fails to satisfy the triangle inequality.\footnote{Some authors, perhaps more aptly, call the inversive \textit{distance} the inversive \textit{product} of $C_{1}$ and $C_{2}$.}

 The third definition is entirely in terms of the spherical metric.
 
\begin{Definition}[\textsc{inversive distance in the spherical metric}]
In the $2$-sphere $\mathbb{S}^{2}$, the inversive distance may be expressed as
\begin{equation}\label{EQ:sphereID} 
	\langle C_{1}, C_{2} \rangle = \frac{-\cos \sphericalangle ( p_{1},p_{2} ) + \cos(r_{1}) \cos(r_{2})}{\sin(r_{1}) \sin(r_{2})} =\frac{- p_{1}\cdot p_{2}  + \cos(r_{1}) \cos(r_{2})}{\sin(r_{1}) \sin(r_{2})}.
\end{equation}
Here, $\sphericalangle (p_{1},p_{2}) = \cos^{-1} (p_{1} \cdot p_{2})$ denotes the spherical distance between the centers, $p_{1}$ and $p_{2}$, of the respective companion disks, $p_{1} \cdot p_{2}$ the usual Euclidean inner product between the unit vectors $p_{1}$ and $p_{2}$, and $r_{1}$ and $r_{2}$ the respective spherical radii of the companion disks. Note that $r_{i} = \cos^{-1} (p_{i} \cdot q_{i})$ for any point $q_{i}$ on the circle $C_{i}$, for $i=1,2$.
\end{Definition}

Verifying the equivalence of this with the general definition is an exercise in the use of trigonometric identities after a standard placement of $C_{1}$ and $C_{2}$ on $\mathbb{S}^{2}$ followed by stereographic projection. This standard placement is obtained by finding the unique great circle $C$ orthogonal to both $C_{1}$ and $C_{2}$ and then rotating the sphere so that this great circle is the equator, which then stereographically projects to the unit circle in the complex plane. The details are left to the reader.

Here are two more quick descriptions of inversive distance. For those conversant with the representation of circles in $\mathbb{S}^{2}$ by vectors in de Sitter space, the inversive distance is the Minkowski inner product between the two points of de Sitter space that represent the two oriented circles. This is, perhaps, the most elegant formulation of the product. The final way I'll describe the inversive distance is a neat little curiosity. Let $C_{1} = \partial\mathbb{D} = \mathbb{S}^{1}$ be the unit circle oriented clockwise and $C_{2}$ a circle oriented counterclockwise that meets the open unit disk non-trivially. Then, as explained on page~\pageref{Page:ConstantCurve}, the intersection $c_{2}$ of $C_{2}$ with the open disk is a curve of constant geodetic curvature in the Poincar\'e disk $\mathbb{D} \cong \mathbb{H}^{2}$. The inversive distance is $\langle C_{1},C_{2} \rangle = \mathrm{curv}(c_{2})$, the geodetic curvature of the cycle $c_{2}$ in the Poincar\'e metric on $\mathbb{D}$. This includes all three cases for the cycle $c_{2}$---a hyperbolic circle in $\mathbb{D}$, a horocycle that meets $\partial \mathbb{D}$ at a single point, or a hypercycle that meets $\partial \mathbb{D}$ at two points.\footnote{My student, Opal Graham, noticed, then proved this when I was lecturing on the curves of constant geodetic curvature in the hyperbolic plane.}

\subsection{Some advances on the rigidity question.} In~\cite{BS04}, \textit{inversive distance circle packings} were introduced. Rather than preassigned overlap angles labeling edges of a triangulation of a surface as in the Koebe-Andre'ev-Thurston Theorems, preassigned inversive distances label the edges. As stated already, questions of interest are of the existence and uniqueness of circle configurations in geometric structures on surfaces that realize the inversive distance data. Though the existence question is wide open, in 2011-12 there were three advances on the uniqueness question for inversive distance packings. First, Guo~\cite{Guo:2011kf} proved that inversive distance packings of closed surfaces of positive genus, ones supporting flat or hyperbolic metrics, are locally rigid whenever the inversive distances are non-negative. Shortly after that, Luo~\cite{Luo:2011ex} improved this to global rigidity, or uniqueness of the packings in the cases considered by Guo. Then in a surprising result of the year following, Ma and Schlenker~\cite{Ma:2012hl} produced a counterexample to global uniqueness for packings of the $2$-sphere. They gave examples of pairs of circle packings of $\mathbb{S}^{2}$ in the pattern of the octahedral triangulation with six circles that satisfy the same inversive distance data, but that are not M\"obius equivalent.

The ingredients of Ma and Schlenker's example are Sch\"onhardt's twisted octahedron, which is an infinitesimally flexible polyhedron in Euclidean space $\mathbb{E}^{3}$, embeddings in de Sitter space $\mathbb{S}^{3}_{1}$, and special properties of the Pogorelov map between different geometries. In 2017, John Bowers and I~\cite{Bowers2017} constructed a large family of Ma-Schlenker-like examples using only inversive geometry, producing many counterexamples to the uniqueness of inversive distance circle packings in the $2$-sphere.

The Sch\"onhardt octahedron is an example of a bar-and-joint linkage important in the rigidity theory of Euclidean frameworks, and its use in the Ma-Schlenker example hinted at a way forward in understanding the rigidity theory of inversive distance circle packings in the $2$-sphere. This led to a fruitful change in viewpoint and a reformatting of the question of uniqueness of inversive distance circle packings to the question of the rigidity---local, global, and infinitesimal---of more general \textit{circle frameworks}. These are  
analogues in M\"obius geometry of the Euclidean frameworks in Euclidean geometry with point configurations in $\mathbb{E}^{3}$ replaced by circle configurations in $\mathbb{S}^{2}$ and the Euclidean metric replaced by the non-metric inversive distance. The analogy is not exact, but the theory of linkages in $\mathbb{E}^{3}$ has been found to be a good guide for understanding some of the rigidity theory of circle frameworks. Part of why this works so well is because the space of circles in the $2$-sphere is a $3$-dimensional incidence geometry that has much in common with the space of points in Euclidean $3$-space. The lines of this geometry are coaxial circle families and the planes are what Carath\'eodory in~\cite{cC08} called bundles of circles. This allows one to define what is meant by a convex collection of circles, planar collections of circles, circle polyhedra, bounded circle configurations, etc. Space constraints in this article interfere with even a cursory account of these issues, so I am content with listing a couple of recent successes of the theory without all the definitions needed for a precise understanding, and then taking some time to set up the language of this change of viewpoint.

The two theorems following are the result, both the statements and the proofs, of an engagement between circle packing theory and the classical rigidity theory  of Euclidean frameworks in $\mathbb{E}^{3}$.

\begin{Theorem}[Bowers, Bowers, and Pratt~\cite{BBP18}]\label{Theorem:Cauchy1}
	Let $\mathcal{C}$ and $\mathcal{C}'$ be two non-unitary, inversive distance circle packings with ortho-circles for the same oriented edge-labeled triangulation of the $2$-sphere $\mathbb{S}^{2}$. If $\mathcal{C}$ and $\mathcal{C}'$ are convex and proper, then there is a M\"obius transformation $T:\mathbb{S}^2\rightarrow\mathbb{S}^2$ such that $T(\mathcal{C}) = \mathcal{C}'$. 
\end{Theorem}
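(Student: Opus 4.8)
The plan is to follow the template of Cauchy's classical proof of the rigidity of convex Euclidean polyhedra, transported into the inversive geometry of circle configurations. First I would reformulate the data: a non-unitary inversive distance circle packing $\mathcal{C}$ with ortho-circles for an oriented edge-labeled triangulation $K$ of $\mathbb{S}^{2}$ is the same thing as a convex \emph{circle polyhedron} realizing $K$ --- a configuration $\{C_{v}\}$ of circles, one per vertex, carrying the prescribed inversive distance along each edge, together with the ortho-circle $O_{f}$ orthogonal to the three circles of each face $f$ playing the role of the carrier of that face. Connecting centers of adjacent circles realizes $K$ as a geodesic triangulation of $\mathbb{S}^{2}$, so the total face angle at every vertex equals $2\pi$. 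The assertion to be proved becomes: two convex proper circle polyhedra over the same edge-labeled $K$ are M\"obius-equivalent. Since, by the inversive-distance/$\mathbb{H}^{3}$ dictionary recalled above, $\langle C_{u},C_{v}\rangle = \cosh\delta$ records the distance between the hyperbolic planes dual to $C_{u}$ and $C_{v}$, this statement also has a dual reading as a rigidity statement for hyperideal polyhedra in $\mathbb{H}^{3}$, which could alternatively be approached through Schlenker-type rigidity; but the Cauchy route stays entirely inside $\mathbb{S}^{2}$ and is the one I would pursue.

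Second, introduce the comparison labeling. Given two such packings $\mathcal{C}$ and $\mathcal{C}'$ over the same $K$, decorate each edge $e=uv$ with a sign $+$, $-$, or $0$ according to whether a fixed ``discrete dihedral'' invariant along $e$ --- the intersection angle of the two ortho-circles $O_{f}$, $O_{f'}$ of the faces sharing $e$ (equivalently, the dual hyperideal edge length) --- is strictly larger, strictly smaller, or unchanged in passing from $\mathcal{C}$ to $\mathcal{C}'$. If every edge is labeled $0$ the theorem follows quickly: equal inversive distances along all edges at a vertex together with equal ortho-circle angles rigidify the star of each vertex up to a M\"obius transformation (the \emph{Face Angle Rigidity} step), and propagating the resulting, uniquely determined transformation across the connected dual graph of $K$ produces a single $T$ with $T(\mathcal{C})=\mathcal{C}'$. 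So assume some edge carries a nonzero label; the \emph{Cauchy combinatorial lemma} then delivers a vertex $v$ incident to a nonzero edge around which the cyclic sequence of edge signs has at most two sign changes.

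Third --- the geometric heart of the argument --- derive a contradiction at $v$ by means of a \emph{Cauchy arm lemma} for circle polygons. The link of $v$ in a circle polyhedron is a cyclic chain of circles whose ``side lengths'' are governed by the inversive distances along the edges incident to $v$ and whose ``vertex angles'' are the face angles at $v$; the packing condition forces the total vertex angle to be exactly $2\pi$ both for $\mathcal{C}$ and for $\mathcal{C}'$. The signs on the edges at $v$ encode a monotone deformation of the side lengths of this link polygon with at most two sign changes, and the arm lemma --- the genuinely new ingredient, established for convex circle polygons under the convexity and properness hypotheses, with the borderline two-sign-change case handled by the \emph{Discrete Four Vertex Lemma} --- forces the total vertex angle to change strictly monotonically under such a deformation. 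This contradicts its being $2\pi$ on both sides. Hence no nonzero edge exists, and the theorem follows from the all-zero case above.

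The main obstacle is, without doubt, the arm lemma in this inversive setting. In Cauchy's original proof the link is an honest spherical polygon and the arm lemma is classical; here the link lives in a non-metric incidence geometry, ``convex'' must be taken in exactly the right inversive sense, and the lemma genuinely fails without the properness hypothesis and can fail at the two-sign-change boundary, so the four-vertex-type bookkeeping is needed to eliminate the degenerate equality cases. Controlling limiting configurations --- ortho-circles degenerating, circles passing between the metric regimes of \Cref{fig:ID}, vertices becoming hyperideal --- and verifying that convexity and properness persist under these limits is where the real difficulty lies; the combinatorial lemma and the final gluing are comparatively routine once the arm lemma is in place.
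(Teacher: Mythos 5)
Your proposal follows essentially the same route as the paper's sketch of the Bowers--Bowers--Pratt argument: Cauchy's combinatorial lemma is imported unchanged, the comparison labeling is placed on the edges, and all of the real difficulty is concentrated in defining a M\"obius-invariant link polygon at each vertex and proving an arm-lemma analogue for it under the convexity and properness hypotheses --- in~\cite{BBP18} these links are realized concretely as the hyperbolic \emph{green-black polygons}. You correctly identify the adapted arm lemma as the genuinely new ingredient and the place where the classical Euclidean/spherical argument breaks down, which is exactly where the paper locates the main work.
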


The \textit{edge-label} refers to prescribed inversive distances labeling each edge. 
\textit{Non-unitary} means that the inversive distance between any pair of adjacent circles is not unity; in fact, these inversive distances are in the set $(-1,1)\cup(1, \infty)$. Having \textit{ortho-circles} means that each triple of mutually adjacent circles have an orthogonal circle. This generalizes to a global rigidity theorem about \textit{circle polyhedra}, circle configurations in the pattern of $3$-dimensional polyhedra whose faces correspond to circle configurations that are planar in the incidence geometry of circle space; see~\cite{BBP18} for details.

\begin{Theorem}[Bowers, Bowers, and Pratt~\cite{BBP18}]\label{Theorem:Cauchy2}
	Any two convex and proper non-unitary circle polyhedra with M\"obius-congruent faces that are based on the same oriented abstract spherical polyhedron and are consistently oriented are M\"obius-congruent.
\end{Theorem}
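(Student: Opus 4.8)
The plan is to transplant Cauchy's rigidity argument into the incidence geometry of circle space, where coaxial circle families play the role of lines and Carath\'eodory bundles the role of planes. The point of the convex, proper, and non-unitary hypotheses is precisely that, in this geometry, such circle polyhedra behave enough like convex Euclidean polyhedra that the two classical ingredients of Cauchy's proof --- the Cauchy Combinatorial Lemma and a Cauchy-type arm lemma --- both become available. The first task is a reduction to edge invariants. Along each edge of the underlying oriented abstract spherical polyhedron the two faces sharing that edge are glued along a common coaxial family, and one records the \emph{dihedral invariant} $\delta_e$ describing their relative M\"obius position there; this is a well-defined real number because the faces are planar in circle space and the framework is proper and non-unitary. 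I would then prove a development lemma: if $\mathcal{C}$ and $\mathcal{C}'$ have M\"obius-congruent, consistently oriented faces and $\delta_e = \delta'_e$ for every edge $e$, then $\mathcal{C}$ and $\mathcal{C}'$ are M\"obius-congruent. The proof is the standard continuation along the $2$-skeleton: normalize one face of $\mathcal{C}'$ onto the corresponding face of $\mathcal{C}$ by a M\"obius transformation; crossing any edge, the placement of the neighboring face is forced by the equality of face data and of $\delta_e$ together with consistent orientation; simple connectivity of $\mathbb{S}^2$ makes the continuation single-valued.

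Suppose now that $\mathcal{C}$ and $\mathcal{C}'$ are not M\"obius-congruent. By the development lemma some edge has $\delta_e \neq \delta'_e$. Color each edge $+$, $-$, or $0$ according to the sign of $\delta'_e - \delta_e$, discard the $0$-edges, and apply the Cauchy Combinatorial Lemma to the resulting nonempty subgraph of the $1$-skeleton embedded in $\mathbb{S}^2$: there is a vertex $v$ at which the cyclic sequence of the surviving incident edges has at most two sign changes.

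The faces incident to $v$ form a \emph{circle fan}: a cyclic chain of M\"obius-congruent faces whose consecutive members are glued along coaxial families with prescribed $\delta$-values, and which closes up, since the M\"obius holonomy around $v$ is the identity. Proving the right monotonicity statement for such fans is, I expect, the main obstacle. One must show: if one passes from the fan in $\mathcal{C}$ to the fan in $\mathcal{C}'$ by changing the $\delta$-values along its edges in a pattern with at most two sign changes and not all zero, then the fan cannot close up --- the analogue of the arm lemma for convex spherical polygons. The natural route is to pass to the cross-section of circle space transverse to the coaxial family through $v$; there the convexity and properness of the polyhedra should guarantee that the relevant configuration is a genuinely convex polygon in the appropriate model geometry (a de Sitter-type cross-section), and that each $\delta_e$ is a monotone function of the corresponding ``opening parameter'' of that polygon. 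Transferring the spherical/hyperbolic trigonometric monotonicity of the classical arm lemma to this inversive cross-section, and verifying that the non-unitary, convex, proper hypotheses are exactly what confines the cross-section to the regime in which the arm lemma holds, is where the real work lies.

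Granting this, the vertex $v$ from the combinatorial step produces a circle fan that contradicts the arm lemma. Hence every edge is colored $0$, so $\delta_e = \delta'_e$ for all $e$, and the development lemma yields a M\"obius transformation $T$ with $T(\mathcal{C}) = \mathcal{C}'$. Finally, Theorem~\ref{Theorem:Cauchy1} falls out as the special case in which the abstract spherical polyhedron is a triangulation of $\mathbb{S}^2$, since then the ortho-circle hypothesis supplies the planar face configurations automatically and non-unitarity of the edge inversive distances is built into the statement.
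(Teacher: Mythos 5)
Your architecture is the right one, and it is the one the paper describes: reduce the comparison of the two circle polyhedra to a sign-labeling of the edges of the underlying spherical polyhedron, note that Cauchy's Combinatorial Lemma is purely graph-theoretic and so applies verbatim, and then seek a contradiction at a vertex with at most two sign changes by means of an arm-lemma analogue for the vertex figure. The development step (equal face data plus equal edge invariants plus consistent orientation forces M\"obius congruence by continuation over the simply connected $2$-skeleton) is also the standard reduction and is unobjectionable, granted that the relative position of two adjacent planar circle configurations along their common coaxial family is pinned down by a single invariant together with the orientation convention.

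The genuine gap is exactly where you say you expect it: the arm-lemma analogue is not proved, and it cannot be waved through, because it \emph{is} the theorem --- everything else is Cauchy's two-hundred-year-old scaffolding. The paper is explicit that Cauchy's Arm Lemma ``fails to apply here,'' and that the main work of \cite{BBP18} is the construction, for each vertex and in a M\"obius-invariant manner, of a family of \emph{hyperbolic} polygons (the green-black polygons) together with a proof of an arm lemma for the convex ones; the convex, proper, and non-unitary hypotheses enter precisely to guarantee that these vertex polygons are convex and have no degenerate or unitary edges, which is what makes the monotonicity argument run. Your proposed substitute --- pass to a de Sitter-type cross-section transverse to the coaxial family at $v$ and argue that each $\delta_e$ is a monotone function of an ``opening parameter'' of a convex polygon there --- is a plausible heuristic but is not the same construction, and nothing in your sketch establishes either that the cross-sectional figure is convex in a sense for which an arm lemma is available or that the classical spherical trigonometric monotonicity transfers; the actual vertex figures are hyperbolic polygons with edges of two distinct geometric types, and the comparison inequality for them requires a separate, nontrivial proof. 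Until that lemma is stated precisely and proved, the contradiction at the distinguished vertex, and hence the whole argument, is not established. (Your closing remark that Theorem~\ref{Theorem:Cauchy1} is the triangulation special case, with the ortho-circles supplying the planar faces, is consistent with how the paper presents the two results.)
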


Theorem~\ref{Theorem:Cauchy1} coupled with the Ma-Schlenker example of~\cite{Ma:2012hl} and the examples of~\cite{Bowers2017} show that the uniqueness of inversive distance circle packings, and more generally, of circle polyhedra is exactly analogous to that of Euclidean polyhedra---convex and bounded polyhedra in $\mathbb{E}^{3}$ are prescribed uniquely by their edge lengths and face angles whereas non-convex or unbounded polyhedra are not. The proof of this for convex and bounded Euclidean polyhedra is Cauchy's celebrated rigidity theorem \cite{Cauchy1813}, which is reviewed in Section~\ref{Section:CRT}. The proof of Theorem~\ref{Theorem:Cauchy2} follows Cauchy's original argument, which splits the proof into two components---a combinatorial lemma and a geometric lemma. Cauchy's combinatorial lemma deals with a certain labeling of the edges of any graph on a sphere, and applies to the present setting. The geometric lemma, known as \textit{Cauchy's Arm Lemma}, requires that a polygon with certain properties be defined for each vertex of the polyhedron, and fails to apply here. The main work of the proof is in describing and analyzing a family of hyperbolic polygons called {\em green-black polygons} that are defined for each vertex of a circle polyhedron in a M\"obius-invariant manner. An analogue of Cauchy's Arm Lemma for convex green-black polygons is developed and used to to prove these theorems.

\subsection{Circle frameworks and M\"obius rigidity.} I'll close out this section with a description of the change in viewpoint from circle packings to circle frameworks. This can be done using only absolute inversive distance, but I find it advantageous to remain as general as possible in setting up the viewpoint. The goal is to generalize the language of circle packings and patterns of triangulations and quadrangulations of the $2$-sphere to that of circle realizations of oriented circle frameworks. Let $G$ be a graph, by which I mean a set of vertices $V= V(G)$ and simple edges $E = E(G)$. Both loops and multiple edges are disallowed. An oriented edge incident to the initial vertex $u$ and terminal vertex $v$ is denoted as $uv$, and $-uv$ means the oppositely oriented edge $vu$. I will use the same notation, $uv$, to denote an un-oriented edge, context making the meaning clear. A \textit{circle framework with adjacency graph $G$}, or \textit{\textit{c}-framework} for short, is a collection $\mathcal{C} = \{ C_{u} : u \in V(G) \}$ of oriented circles in $\mathbb{S}^{2}$ indexed by the vertex set of $G$. This is denoted by $G(\mathcal{C})$. Two \textit{c}-frameworks $G(\mathcal{C})$ and $G(\mathcal{C}')$ are \textit{equivalent} if $\langle C_{u}, C_{v} \rangle = \langle C_{u}', C_{v}'\rangle$ whenever $uv$ is an edge of $G$. Let $H$ be a subgroup of the inversive group $\mathrm{Inv}(\mathbb{S}^{2})$ of the $2$-sphere. Two collections $\mathcal{C}$ and $\mathcal{C}'$ of oriented circles indexed by the same set are \textit{$H$-equivalent} or \textit{$H$-congruent} provided there is a mapping $T\in H$ such that $T(\mathcal{C}) = \mathcal{C}'$, respecting the common indexing and the orientations of the circles. When $H$ is not so important they are \textit{inversive-equivalent} or \textit{inversive-congruent}, and when $T$ can be chosen to be a M\"obius transformation, they are \textit{M\"obius-equivalent} or \textit{M\"obius-congruent}. The global rigidity theory of \textit{c}-frameworks concerns conditions on $G$ or $G(\mathcal{C})$ that ensure that the equivalence of the \textit{c}-frameworks $G(\mathcal{C})$ and $G(\mathcal{C}')$ guarantees their $H$-equivalence. Often one restricts attention to \textit{c}-frameworks in a restricted collection $\mathscr{F}$ of \textit{c}-frameworks. In Theorem~\ref{Theorem:Cauchy2}, $\mathscr{F}$ is the collection of non-unitary, convex and proper \textit{c}-polyhedra and the interest is in M\"obius equivalence.

\begin{Definition}[\textsc{labeled graph and circle realization}]
	An \textit{edge-label} is a real-valued function $\beta : E(G) \to \mathbb{R}$ defined on the edge set of $G$, and $G$ together with an edge-label $\beta$ is denoted as $G_{\beta}$ and called an \textit{edge-labeled graph}. The \textit{c}-framework $G(\mathcal{C})$ is a \textit{circle realization} of the edge-labeled graph $G_{\beta}$ provided $\langle C_{u}, C_{v} \rangle = \beta (uv)$ for every edge $uv$ of $G$, which henceforth is denoted as $G_{\beta}(\mathcal{C})$. See \Cref{fig:Octa}.
\end{Definition}

\begin{figure}
\begin{subfigure}[b]{0.45\textwidth}
	\includegraphics[width=\textwidth]{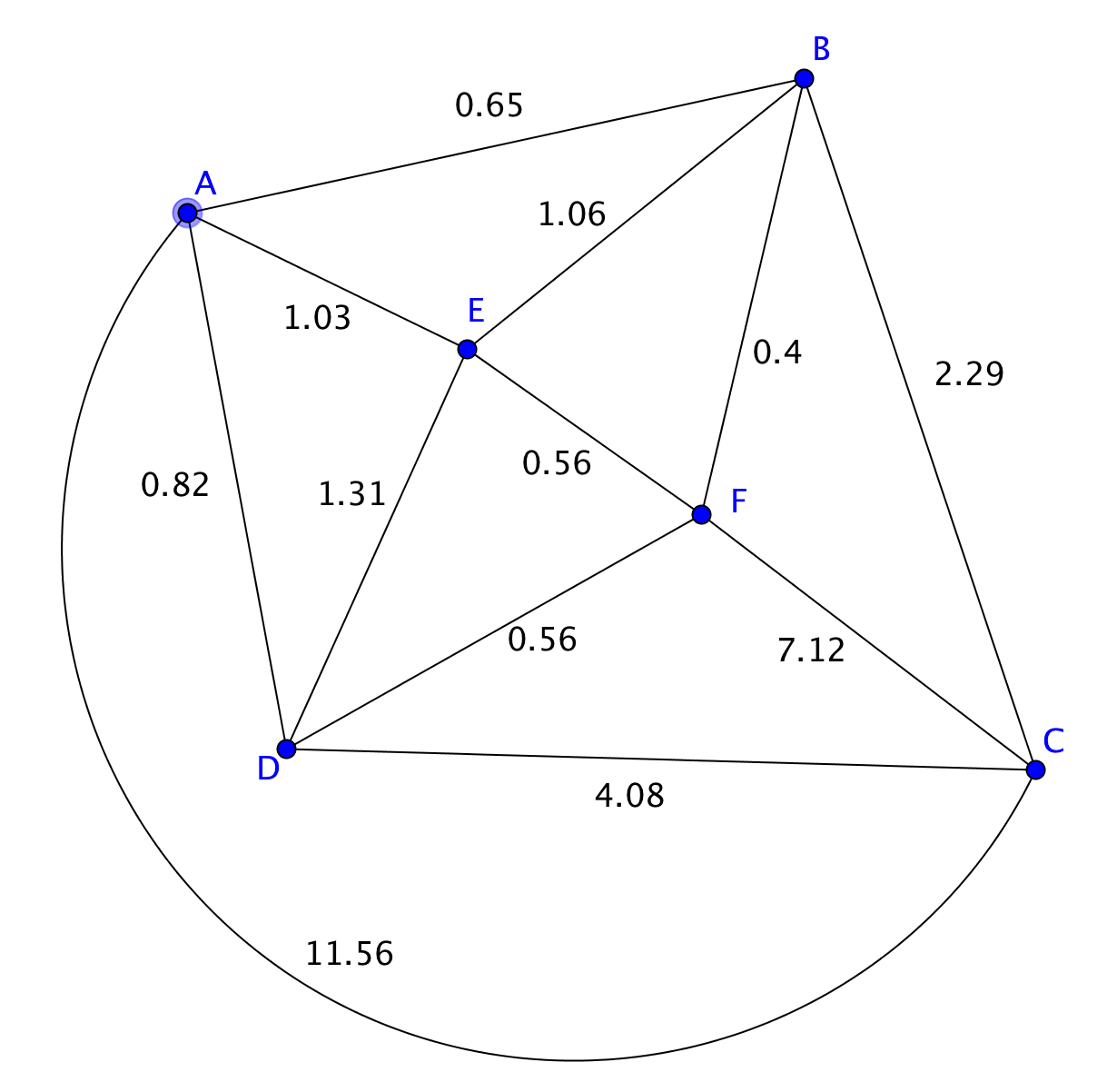}
\caption{An edge-labeled octahedral graph $\mathcal{O}_{\beta}$. Labels $< 1$ imply overlapping circles, $>1$ separated ones.}\label{Oct.A}
\end{subfigure}
\quad
\begin{subfigure}[b]{0.45\textwidth}
	\includegraphics[width=\textwidth]{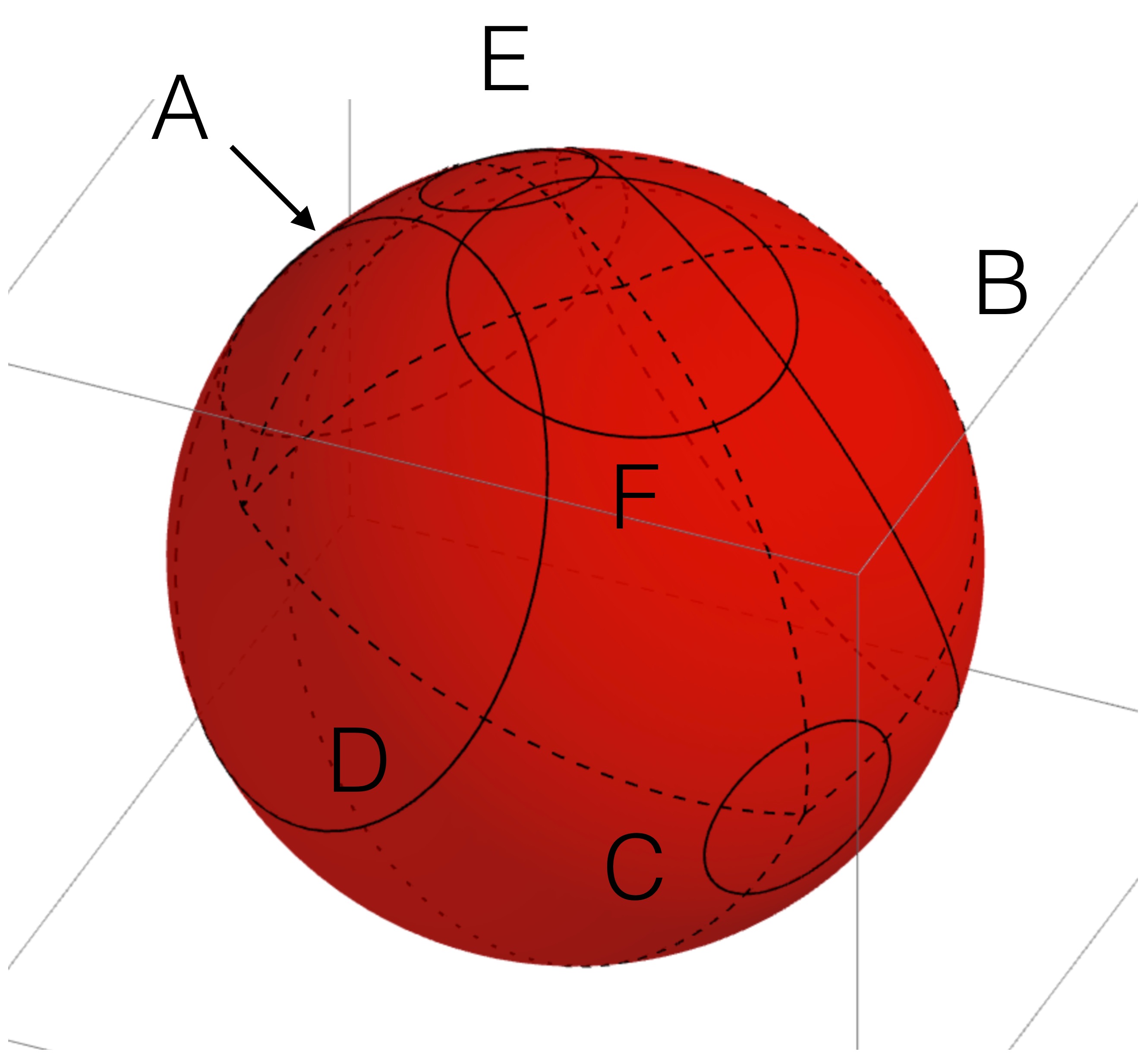}
\caption{A corresponding \textit{c}-framework realizing $\mathcal{O}_{\beta}$. Circle $\mathsf{A}$ is hidden on the back side of the sphere.}\label{Oct.B}
\end{subfigure}
      \caption{An edge-labeled octahedral graph and its circle realization. The labels are proposed inversive distances between the circles corresponding to the vertices.}\label{fig:Octa}
\end{figure}

Circle packings are circle realizations of edge-labeled graphs that arise as the $1$-skeletons of oriented triangulations of the $2$-sphere that also satisfy certain properties that ensure that the realizations of the triangular boundaries of faces respect orientation. The general definition allows for branch vertices and configurations of circles in which the open geodesic triangles cut out by connecting centers of adjacent circles overlap. There are subtleties in which I have no interest, so I am going to adapt a restricted definition that corresponds to the circle packings that arise from spherical polyhedral metrics on triangulated surfaces. These are circle realizations of the edge-labeled $1$-skeleton $G_{\beta}= K^{(1)}_{\beta}$ of an oriented triangulation $K$ of $\mathbb{S}^{2}$ that produce oriented geodesic triangulations\footnote{By this I mean that the orientation of the geodesic triangulation determined by the packing is consistent with the orientation on $K$.} of the 2-sphere when adjacent circle centers are connected by geodesic arcs. The assumption here is that the centers of no two adjacent circles are antipodal, so that there is a unique geodesic arc connecting them, and that the centers of three circles corresponding to the vertices of a face of $K$ do not lie on a great circle. Now this causes no particular problems when all adjacent circles overlap nontrivially, the traditional playing field of circle packing, but does cause some real concern when adjacent circles may have inversive distance greater than unity. For example, a circle realization may produce a geodesic triangulation of the sphere by connecting adjacent centers while its M\"obius image may not. This is traced directly to the fact that neither circle centers nor radii, nor geodesic arcs, are M\"obius invariants in the inversive geometry of the sphere. This behavior does not occur for inversive distance circle packings of the Euclidean or hyperbolic planes (and surfaces), precisely because circle centers and geodesics are invariant under automorphisms and radii are invariant up to scale in Euclidean geometry and invariant in hyperbolic geometry. My belief is that using centers and radii of circles in inversive geometry should be avoided except where these can be used to simplify computations (as in the use of the spherical definition of inversive distance). The shift then is from inversive distance circle packings to inversive distance circle realizations. One is less concerned with possible underlying geodesic triangulations and more concerned with M\"obius-invariant quantities. For example, rather than working with a geodesic face formed by connecting the centers of three mutually adjacent circles, one is more interested in the existence of an ortho-circle, a circle mutually orthogonal to the three, which is a M\"obius invariant. Though the initial motivation was circle packing as reflected in Theorem~\ref{Theorem:Cauchy1}, the real interest has evolved to circle realizations as reflected in the more general version represented by Theorem~\ref{Theorem:Cauchy2}.

It turns out that Theorem~\ref{Theorem:Cauchy2} has implications for the rigidity of generalized hyperbolic polyhedra in $\mathbb{H}^{3}$. Thurston was the first to exploit this connection between circle configurations on $\mathbb{S}^{2}$ and hyperbolic polyhedra in $\mathbb{H}^{3}$ in really significant ways, and his observations inspired several avenues of clarification and generalization. It is to this that I turn in the penultimate section of this article.

\section{Polyhedra---From Steiner (1832) to Rivin (1996), and Beyond}\label{Section:Cage}\index{polyhedron}
In this section I survey the rich mathematical vein that has been mined in the geometric theory of polyhedra, particularly of three-dimensional hyperbolic polyhedra, that has its origins in Thurston's insights on using his circle packing theorem to characterize certain hyperbolic polyhedra. The initial observation of Thurston was that the study of polyhedra in hyperbolic three-space can be transferred to the study of overlapping circle packings in the two-sphere by realizing the Riemann sphere as the boundary of the Beltrami-Klein model of $\mathbb{H}^{3}$ sitting as the unit ball $\mathbb{B}^{3}$ in the real projective three-space. Theorems in one of these venues correspond to theorems in the other. Later Thurston's students, Oded Schramm and Igor Rivin, made great strides in the theory of both $3$-dimensional Euclidean and hyperbolic polyhedra, not so much using the techniques of circle packing but instead using very intricate and clever geometric arguments, often times in this classical setting of $\mathbb{H}^{3} \cong \mathbb{B}^{3} \subset  \mathbb{E}^{3} \subset \mathbb{RP}^{3}$. There is here a beautiful interplay among the classical geometries illustrating Arthur Cayley's aphorism that ``All geometry is projective geometry.'' Here one sees the Beltrami-Klein model of hyperbolic three-space as a sub-geometry of the real projective three-space, with its orientation-preserving isometry group naturally identified with the Lorentz group of Minkowski space-time, which itself restricts to the two-sphere boundary of hyperbolic space as the group of circle-preserving transformations of the two-sphere, the group of M\"obius transformations. This one geometry, the real projective geometry of dimension three, presents a playing field for studying three-dimensional polyhedra---classical Euclidean polyhedra, hyperbolic polyhedra of various types and generalizations, projective polyhedra, and circle polyhedra of M\"obius geometry. 

I will begin with an application of Thurston's circle packing theorem on using  polyhedra to cage a sphere, and move then to Schramm's generalization. From there I will discuss the characterization of certain hyperbolic polyhedra---compact by Hodgson and Rivin, ideal by Rivin, and hyper-ideal by Bao and Bonahon---and will finish with very recent work by Chen and Schlenker that characterizes those convex projective polyhedra all of whose vertices lie on the ideal boundary of hyperbolic space. I include a bonus final section on Cauchy's 1813 Rigidity Theorem for the reader who is approaching this subject as a novice. This is the fundamental theorem of rigidity theory, and the techniques and tools Cauchy developed have been used time and again in proofs of rigidity in the past two hundred years. Both Schramm and Rivin make use of Cauchy's toolbox in their theorems on convex hyperbolic and Euclidean polyhedra, as do Bao and Bonahon as well as Bowers, Pratt and the author. Before these recent developments, previous generations of mathematicians who delved into the study of polyhedra made use of Cauchy's toolbox---Dehn in his proof of infinitesimal rigidity, Aleksandrov in his rigidity results, Gluck in his examination of generic rigidity, and Connelly in various of his contributions.

\subsection{Caging eggs---Thurston and Schramm.} In 1832, Jakob Steiner~\cite{jS1832} asked 
\begin{quote}
In which cases does a convex polyhedron have a combinatorial equivalent which is inscribed in, or circumscribed about, a sphere?
\end{quote}\index{polyhedron!inscribed or circumscribed}
When a convex polyhedron $P$ is \textit{inscribed} in the sphere $S$ so that its vertices lie on $S$, then its polar dual \textit{circumscribes} the sphere $S$, so that each face of the dual $P^{*}$ meets $S$ in a single point. It wasn't until 1928 that Ernst Steinitz found families of non-inscribable polyhedral types with the example of a cube truncated at one vertex being the simplest. Marcel Berger~\cite{mB10} (p.~532) takes this long duration of time between Steiner and Steinitz as evidence that the theory of polyhedra in the years intervening had fallen into disrespect among mathematicians, being a subject of the old-fashioned mathematics of synthetic geometry.\footnote{Berger~\cite{mB10} uses the word \textit{disdain} to describe the prevailing opinion of the study of polyhedra.} One would be hard pressed to say that the study of polyhedra in the time between Steinitz and Thurston was anything but a curiosity to many a mathematician schooled in the rarified heights of abstraction that had captured the mathematical mind of the time. The sort of ``pedestrian geometry'' offered by the study of polyhedra captured the imagination of a select few. There has been a healthy development of the rigidity theory of polyhedra, notably by Aleksandrov in the nineteen-fifties, and Gluck and Connelly in the nineteen-seventies. Aleksandrov's work was largely ignored in the West until the nineteen-eighties. Coxeter had done truly foundational work in the combinatorial structure of polyhedra in the nineteen-forties and -fifties, and Victor Klee and Branko Gr\"unbaum began their foundational studies a bit later. Coxeter's work in geometry was routinely dismissed by much of mainstream mathematics as old-fashioned nineteenth century mathematics, uninteresting and pedestrian. Both Aleksandrov and Coxeter were ``rehabilitated'' by the larger community of geometers and topologists when their work of the forties and fifties---Aleksandrov's on metric geometry and Coxeter's on reflection groups---became important to the development of geometric group theory after Gromov's publication of his hyperbolic groups essay~\cite{mG87} in 1987. With apologies to Aleksandrov, Coxeter, Klee, and Gr\"unbaum, it has taken the attention of Thurston and his students Schramm and especially Rivin to resurrect more intense interest among topologists in this venerable old subject of classical geometry.\footnote{Gr\"unbaum~\cite{bG03} addresses the disinterest of the mathematical community in the combinatorial theory of polytopes in the preface to his book.}

Steinitz's basic tool for attacking the Steiner question is the following observation. Suppose the polyhedron $P$ circumscribes the sphere $S$. Let $e = uv$ be an edge of $P$ with adjacent faces $f$ and $g$. Since $P$ circumscribes $S$, the face $f$ is tangent to $S$ at a point $p$ and $g$ is tangent at a point $q$. Then the angle $\angle upv = \angle uqv$ in measure and we let $\Theta (e)$ denote this common value. It is immediate that summing these edge labels for the edges of any face yields an angle sum of $2\pi$. The reader might want to use this observation to see why a dodecahedron truncated at every vertex admits no inscribed sphere as there is no edge labeling $\Theta$ for this polyhedron that satisfies this property.

According to Steinitz then, the condition that an edge label $\Theta: E(P) \to (0,\pi)$ exists for the polyhedron $P$ whose sum for the edges of each face is $2\pi$ is a necessary condition that $P$ have a combinatorially equivalent realization that circumscribes a sphere, but it is not sufficient. It was not until Rivin's study of hyperbolic polyhedra in the late nineteen-eighties and early -nineties that a characterization of polyhedra of \textit{circumscribable type}, ones combinatorially equivalent to polyhedra that may circumscribe a sphere, was found. The definitive result is due to Rivin and reported in Hodgson, Rivin, and Smith~\cite{HRS92}, and follows from his characterization of ideal convex hyperbolic polyhedra that is presented in a later section.

\begin{CTC}[Rivin]
A polyhedron $P$ is of circumscribable type if and only if there exists a label $\Theta: E(P) \to (0,\pi)$ such that the sum of the labels $\Theta(e)$ as $e$ ranges over any circuit bounding a face is $2\pi$, while the sum as $e$ ranges over any simple circuit not bounding a face is strictly greater than $2\pi$.
\end{CTC}\label{CTC}
A polyhedron is of \textit{inscribable type} if it is combinatorially equivalent to one that may be inscribed in a sphere.
\begin{ITC}[Rivin]

A polyhedron $P$ is of inscribable type if and only if its dual $P^{*}$ is of circumscribable type.
\end{ITC}

The proofs will be discussed later, but first I want to generalize this discussion a bit. Inscription and circumscription are the respective cases, $m=0$ and $m=d-1$, of the question of whether a $d$-dimensional convex polytope has a realization in $\mathbb{E}^{d}$ each of whose $m$-dimensional faces meets a fixed $(d-1)$-dimensional sphere in a single point. One says that the polytope is \textit{$(m,d)$-scribable} in this case. Egon Shulte~\cite{eS85} proved in the mid-nineteen-eighties that when $0\leq m < d$ and $d > 2$, then there are combinatorial types of $d$-dimensional polytopes that are not $(m,d)$-scribable, except for the single exceptional case when $(m,d) = (1,3)$. The exceptional case then is when a convex polyhedron in $\mathbb{E}^{3}$ \textit{midscribes}\index{polyhedron!midscribe} a sphere $S$, so that each edge of $P$ is tangent to $S$, meeting $S$ in exactly one point.

In light of Shulte's result it perhaps is surprising that in his exceptional case, every convex polyhedron in $\mathbb{E}^{3}$ has a combinatorially equivalent realization that is midscribable about, say, the unit sphere $\mathbb{S}^{2}$. Thurston in Chapter 13 of GTTM states that this is a consequence of Andre'ev's theorems in~\cite{Andreev:1970a,Andreev:1970b}. The proof I give merely applies the Koebe-Andre'ev-Thurston Theorem to an appropriately edge-labeled graph.

\begin{MCP}[Thurston~\cite{Thurston:1980}]
Every convex polyhedron in $\mathbb{E}^{3}$ has a combinatorially equivalent realization that is midscribable about the unit sphere $\mathbb{S}^{2}$. Considering $\mathbb{E}^{3} \subset \mathbb{RP}^{3}$, any such realization is unique up to projective transformations of $\mathbb{RP}^{3}$ that set-wise fix the unit sphere $\mathbb{S}^{2}$.\footnote{The projective transformations that  fix $\mathbb{S}^{2}$ act as M\"obius transformations on $\mathbb{S}^{2}$.}
\end{MCP}

\begin{figure}
\includegraphics[width = 0.7\textwidth]{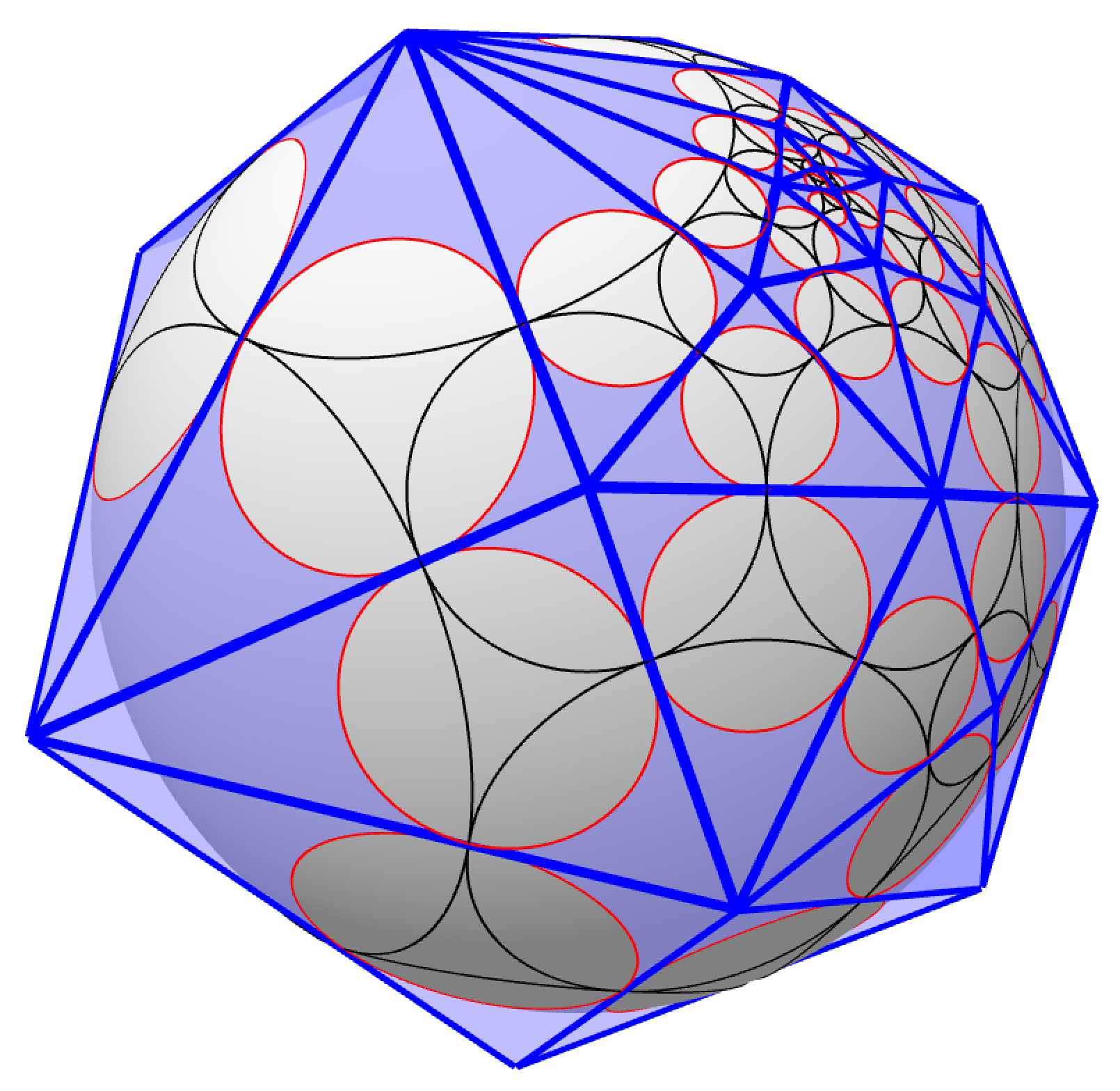}
\caption{A midscribed polyhedron. Each face meets the sphere $\mathbb{S}^{2}$ in a red circle and each vertex is the cone point of a black circle. Each edge $e^{*}$ meets $\mathbb{S}^{2}$ in exactly one point, at the intersection of the two red circles determined by the faces incident to $e^{*}$, or at the intersection of the two black circles determined by the endpoints of $e^{*}$.}
\label{Figure:Midscribe}
\end{figure}

\begin{proof}
Let $P$ be a convex polyhedron in $\mathbb{E}^{3}$ and let $K$ be the simplicial $2$-complex obtained by adding a vertex to each open face of $P$ and starring to the vertices. Precisely, the vertices of $K$ are those of $P$ along with a new vertex $v_{f}$ for each face $f$ of $P$. The edges are the edges of $P$ along with edges of the form $vv_{f}$, where $v$ is a vertex of $f$. The faces are the $2$-simplices of the form $uvv_{f}$ where $uv$ is an edge of $f$. Write the edge-set of $K$ as $E(K) = E(P) \cup E'$, where $E'$ are the new edges of the form $vv_{f}$. Define an angle map $\Phi : E(K) \to [0,\pi/2]$ by $\Phi(e) = 0$ when $e\in E(P)$ and $\Phi(e) = \pi/2$ when $e \in E'$. An application of the Koebe-Andre'ev-Thurston Theorem I produces a circle packing $K(\mathcal{C})$ on the $2$-sphere $\mathbb{S}^{2}$  and a geodesic triangulation in the pattern of $K$ with overlap angles of adjacent circles given by $\Phi$. For each face $f$ of $P$, let $H_{f}$ be the half-space in $\mathbb{E}^{3}$ that meets all the circles of $K(\mathcal{C})$ and whose bounding plane $\partial H_{f}$ contains $C_{v_{f}}$. My claim is that the convex polyhedron $Q = \cap_{f \in F(P)} H_{f}$ midscribes $\mathbb{S}^{2}$ and is combinatorially equivalent to $P$.

For any vertex $v$ of $P$, let $v^{*}$ be the apex of the cone in $\mathbb{E}^{3}$ that is tangent to $\mathbb{S}^{2}$ along the circle $C_{v}$, and when $e = uv$ is an edge of $P$, let $e^{*} = u^{*}v^{*}$ be the segment with endpoints $u^{*}$ and $v^{*}$. Let $f$ be a face of the polyhedron $P$ with vertices $v_{1}, \dots, v_n$ written in cyclic order. Since the circle $C_{v_{f}}$ is orthogonal to the circles $C_{v_{i}}$, the apexes $v_{i}^{*}$ all lie on the bounding plane $\partial H_{f}$, for $i=1, \dots, n$. Let $f^{*}$ denote the convex hull of the points $v_{1}^{*}, \dots, v_{n}^{*}$ in $\partial H_{f}$. A moment's thought should convince the reader that the convex polyhedron $Q$ may be described as the convex hull of the set $V(Q) = \{ v^{*} : v\in V(P)\}$. It follows that the vertex set of $Q$ is $V(Q)$, edge set is $E(Q) = \{ e^{*} : e\in E(P)\}$, and face set is $F(Q) = \{ f^{*} : f \in F(P) \}$. This verifies that $P$ and $Q$ are combinatorially equivalent. Moreover, the edge $e^{*} = u^{*} v^{*}$ is tangent to the sphere $\mathbb{S}^{2}$ at the point of intersection of the circles $C_{u}$ and $C_{v}$, which are tangent since $\Phi (e) = 0$. Hence $Q$ midscribes the sphere $\mathbb{S}^{2}$. See~\Cref{Figure:Midscribe}.

Uniqueness up to projective transformations that fix the unit sphere follows from the M\"obius uniqueness of the circle packing $K(\mathcal{C})$ with edge angle data $\Phi$ and the fact that the M\"obius group extends its action on $\mathbb{S}^{2}$ to a projective action of $\mathbb{RP}^{3}$ set-wise fixing $\mathbb{S}^{2}$.
\end{proof}

Schulte introduced in~\cite{eS85} the question of whether the sphere can be replaced by other convex bodies. Schramm~\cite{oS91b} proved that when the convex polyhedron $P$ is simplicial, then for any smooth convex body $S$, a combinatorially equivalent polyhedron $Q$ exists that midscribes $S$. Of course this means that each edge of $Q$ is tangent to the boundary $\partial S$. Shortly thereafter, Schramm improved his result by removing the requirement that $P$ be simplicial. A convex body $S$ is \textit{strictly convex} if its boundary contains no non-degenerate line segment, and is \textit{smooth} if each point of the boundary has a unique supporting plane. This latter condition is equivalent to the boundary being $C^{1}$-smooth. Schramm's definitive result on midscription is the main theorem of his \textsl{Inventiones} article~\cite{oS92} whimsically entitled \textit{How to cage an egg}.

\begin{CBM}[Schramm~\cite{oS92}]
	Let $P$ be a convex polyhedron and $S$ a smooth strictly convex body in $\mathbb{E}^{3}$. Then there exists a convex polyhedron $Q$ combinatorially equivalent to $P$ that midscribes $S$. 
\end{CBM}

\begin{proof}[Discussion of Proof]
The proof is rather involved and so I am content to give the briefest of indication of its method. Schramm defines the \textit{configuration space} $\mathcal{Z} = (\mathbb{E}^{3})^{V(P)} \times \mathrm{G}(2,3)^{F(P)}$, where $V(P)$ and $F(P)$ are the respective sets of vertices and faces of $P$, and $\mathrm{G}(2,3)$ is the manifold of oriented affine planes in $\mathbb{E}^{3}$. In this way $P$ is identified with a single point of $\mathcal{Z}$, and the combinatorial type of $P$ defines a submanifold ${\mathcal{Z}}_{P}$ of $\mathcal{Z}$ corresponding to various convex polyhedra in $\mathbb{E}^{3}$ that are combinatorially equivalent to $P$. Schramm then shows that there is a $C^{2}$ convex body $S_{0}$ with positively curved boundary that $P$ midscribes. Let $S_{t}$, $0\leq t \leq 1$, be a $C^{2}$-path of convex bodies with positively curved boundaries with $S_{1} = S$. The idea now is to flow $S_{0}$ to $S_{1}$ along this path and drag combinatorial realizations of $P$ along as midscribing polyhedra. The proof relies on a fine analysis of the configuration space $\mathcal{Z}$ and its submanifold ${\mathcal{Z}}_{P}$, and the method is to show that when $S_{t}$ is midscribed by a realization of $P$, then so is $S_{t'}$ for all $t'$ in an open interval about $t$. Then a delicate argument shows also that the set of parameter values for which $S_{t}$ is midscribable by a realization of $P$ is a closed set. Being open and closed, and nonempty since $P$ midscribes $S_{0}$, this set of parameter values must be the whole of the unit interval, hence $S=S_{1}$ is midscribed by a realization of $P$.
\end{proof}

The remaining discussion on hyperbolic polyhedra has little to do, at least directly, with the Koebe-Andre'ev-Thurston Theorem. The arguments tend to be clever and technical, but ultimately involve the elementary geometry of hyperbolic space, often times realized as the unit ball in projective $3$-space where the machinery of the Minkowski inner product and of de Sitter space is available. I include the discussion in order to complete for the reader the current state of affairs in the study of convex hyperbolic polyhedra, a study which I view as having been revitalized by Thurston's articulation of KAT I and pushed forward into the broader mathematical consciousness by the seminal work of Thurston's students, Oded Schramm and especially Igor Rivin.

\subsection{Compact and convex hyperbolic polyhedra---Hodgson and Rivin.}\index{polyhedron!compact and convex hyperbolic} In his doctoral thesis of 1986, Igor Rivin studied convex hyperbolic polyhedra. Therein he gave a characterization of compact, convex hyperbolic polyhedra that generalizes the Andre'ev results of~\cite{Andreev:1970a}\footnote{See Roeder, Hubbard, and Dunbar's paper \cite{Roeder2007} for a readable proof of Andre'ev's classification of compact hyperbolic polyhedra with non-obtuse exterior dihedral angles.}, and in articles in the early nineteen-nineties, extended his characterization to ideal polyhedra, generalizing Andre'ev's results in~\cite{Andreev:1970b}. He used this latter generalization to answer definitively Steiner's question of 1832 asking for a characterization of those polyhedra that circumscribe a sphere. This of course is the content of the Circumscribable Type Characterization Theorem of the preceding section. In this section, I present an overview of Rivin's characterization of compact and convex hyperbolic polyhedra in terms of a generalized Gauss map. The overview embellishes Hodgson's outline presented in~\cite{cH93} (and repeated in ~\cite{HodgsonRivin:1993}). In the section following, I outline Rivin's characterization of ideal polyhedra and make his observation that the Circumscribable Type Characterization Theorem is an immediate corollary of his characterization of ideal polyhedra.

To lay the groundwork, let's review the \textit{Gauss map} $\mathtt{G}$ of a compact and convex Euclidean polyhedron $P$ to the unit sphere $\mathbb{S}^{2}$. This is a set-valued map from the $2$-complex forming the boundary of $P$ that assigns to the point $p$ of $\partial P$ the set of outward pointing unit normals to support planes to $P$ at $p$. Thus when $p$ is a point of an open face $f$, $\mathtt{G}(p) = \mathtt{G}(f)$ is a single point determined by the outward unit normal to $f$. When $p$ is in the open edge $e$ incident to faces $f$ and $g$, $\mathtt{G}(p) = \mathtt{G}(e)$ is the great circular arc connecting $\mathtt{G}(f)$ to $\mathtt{G}(g)$ of length equal to the exterior dihedral angle between $f$ and $g$. Finally, for a vertex $p$ of $P$, $\texttt{G}(p)$ is the convex spherical polygon bounded by the arcs $\mathtt{G}(e)$ for edges $e$ incident with $p$. When edges $e$ and $e'$ of the face $f$ are incident at $p$, the interior angle of the polygon $\mathtt{G}(p)$ at the vertex $\mathtt{G}(f)$ is $\pi - \alpha$, where $\alpha$ is the interior angle of the face $f$ at $p$. In this way the Gauss map realizes the Poincar\'e dual $P^{*}$ of $P$ as a geodesic cellular decomposition of the $2$-sphere $\mathbb{S}^{2}$. Notice that the Gauss map does not encode all the information needed to reconstruct the polyhedron $P$. It encodes the interior angles of all the faces and the dihedral angles of all adjacent faces, but there is no encoding of side-lengths of the edges of $P$. For example, all rectangular boxes have the same image under the Gauss map, namely, a regular right-angled octahedral decomposition of the sphere $\mathbb{S}^{2}$. 

%This construction of the Poincar\'e dual $P^{*}$ as a cellular decomposition of the $2$-sphere using the Gauss map does not readily apply to convex hyperbolic polyhedra, though Rivin manages to define a generalized Gauss map that works. I will present Rivin's generalization below, but first I present another way to construct the Poincar\'e dual on $\mathbb{S}^{2}$. 

Another way to describe the convex spherical polygon $\mathtt{G}(p)$ for a vertex $p$ of $P$ is as the polar dual $\mathtt{L}^{*}(p)$ of the infinitesimal link $\mathtt{L}(p)$ of $p$ in $P$.\footnote{For a Euclidean polyhedron, $\mathtt{L}(p)$ is the intersection of $P$ with a small sphere centered at $p$, one whose radius is smaller than the lengths of edges incident with $p$, rescaled to unit radius, and is oriented so that its interior is ``to the left'' as one traverses the polygon in its positive direction.} Note that $\mathtt{L}(p)$ is a convex spherical polygon with internal angles equal to the dihedral angles of the faces of $P$ incident with $p$, and edge-lengths equal to the internal angles at the vertex $p$ in the faces of $P$ incident with $p$. Recall that an oriented great circle in $\mathbb{S}^{2}$ and its spherical center are polar duals of one another. The polar dual $\mathtt{L}^{*}(p)$ is obtained by replacing the edges of $\mathtt{L}(p)$ by the polar dual centers of their supporting great circles, and the vertices by appropriate arcs of the polar dual great circles. A nice exercise in spherical geometry verifies that $\mathtt{L}^{*}(p)$ is isometric to $\mathtt{G}(p)$. This gives an alternate construction of the Poincar\'e dual $P^{*}$ as a geodesic, cellular decomposition of the $2$-sphere---just isometrically glue the polar duals $\mathtt{L}^{*}(p)$ together as $p$ ranges over the vertices of $P$ along corresponding edges, $\mathtt{L}^{*}(p)$ glued to $\mathtt{L}^{*}(q)$ whenever $pq$ is an edge of $P$.\footnote{The edge $pq$ determines respective vertices $u$ and $v$ of $\mathtt{L}(p)$ and $\mathtt{L}(q)$ whose respective polar edges $u^{*}$ and $v^{*}$ have the same lengths, namely the exterior dihedral angle of $P$ at edge $pq$.} Obviously this gluing produces a $2$-sphere, not only homeomorphic, but also  isometric to the standard $2$-sphere $\mathbb{S}^{2}$, and reproduces the cellular decomposition determined by the Gauss map.

It is this latter construction of the Poincar\'e dual $P^{*}$ as a cellular decomposition of the $2$-sphere that readily generalizes to convex and compact hyperbolic polyhedra. Indeed, let $P$ now be a convex and compact hyperbolic polyhedron in $\mathbb{H}^{3}$ and for each vertex $p$, let $\mathtt{L}^{*}(p)$ be the polar dual of the infinitesimal link $\mathtt{L}(p)$ of $p$ in $P$.\footnote{This is the link in the tangent space of $\mathbb{H}^{3}$ of the pre-image of the intersection of $P$ with a small neighborhood of $p$ under the exponential map.} The link $\mathtt{L}(p)$, as in the Euclidean case, is an oriented convex spherical polygon in $\mathbb{S}^{2}$ with internal angles equal to the dihedral angles of the faces of $P$ incident with $p$, and edge-lengths equal to the internal angles at the vertex $p$ in the faces of $P$ incident with $p$. The polar dual $\mathtt{L}^{*}(p)$ then encodes the exterior dihedral angles at the edges of $P$ incident with $p$ as the lengths of its edges, and the interior angles $\alpha$ of the faces incident with $p$ as its interior angles in the form $\pi - \alpha$. This construction acts as a local Gauss map in a small neighborhood of the vertex $p$. Now exactly as before, isometrically glue the polar duals $\mathtt{L}^{*}(p)$ together as $p$ ranges over the vertices of $P$ along corresponding edges. The result is again a $2$-sphere topologically, which is called the \textit{Gaussian image} of $P$ and denoted as $\mathtt{G}(P)$, with a spherical metric of constant unit curvature, except at the vertices. The vertices have cone type singularities with concentrated negative curvature. Indeed, at the vertex corresponding to the face $f = p_{1} \cdots p_{n}$ of $P$, the angle sum is $\theta (f) = n\pi - \sum_{i=1}^{n} \alpha_{i}$, where $\alpha_{i}$ is the internal angle of $f$ at the vertex $p_{i}$. In the hyperbolic plane, the compact and convex polygon $f$ always has interior angle sum strictly less than $(n-2) \pi$ so that $\theta (f) > 2\pi$.

This brings us to Rivin's characterization of compact and convex hyperbolic polyhedra.

\begin{CCHPC}[Rivin]
A metric space $(M, g)$ homeomorphic to $\mathbb{S}^{2}$ can arise as the Gaussian image $\mathtt{G}(P)$ of a compact and convex polyhedron $P$ in $\mathbb{H}^{3}$  if and only if these three conditions adhere.
\begin{enumerate}
\item[(i)] The metric $g$ has constant curvature $+1$ except at a finite number of cone points.
\item[(ii)] The cone angle\ at each cone point is greater than $2\pi$.
\item[(iii)] The lengths of the nontrivial closed geodesics of $(M,g)$ are all strictly greater than $2\pi$.
\end{enumerate}
Moreover, the metric $g$ determines $P$ uniquely up to hyperbolic congruence.
\end{CCHPC}

Recall that the Gauss map does not determine Euclidean polyhedra up to congruence since it contains no information about side lengths. In contrast, a hyperbolic polyhedron is determined up to a global hyperbolic isometry by its Gaussian image. The proof of this uniqueness uses Cauchy's toolbox that is reviewed in Addendum~\ref{Section:CRT}, wherein I recall the tools Cauchy used to prove his celebrated rigidity theorem of 1813. The necessity of items (i) and (ii) follows from the previous discussion and that of (iii) uses the fact that the total geodedic curvature of a non-trivial closed hyperbolic space curve is greater than $2\pi$, a hyperbolic version of Fenchel's Theorem on Euclidean space curves. The proof of sufficiency is based on Aleksandrov's Invariance of Domain method used in his study of Euclidean polyhedra in~\cite{aA05}.

Rivin also uses Cauchy's toolbox to prove this rather interesting theorem that illustrates again the enhanced rigidity of hyperbolic polyhedra vis-\`a-vis Euclidean ones.

\begin{FAR}[Rivin]
The face angles of a compact and convex polyhedron in $\mathbb{H}^{3}$ determine it up to congruence.
\end{FAR}

The characterization of compact and convex hyperbolic polyhedra in terms of the Gaussian image surveyed here suffers from the same defect as Aleksandrov's characterization of compact and convex Euclidean polyhedra. Both characterizations posit a singular positively curved metric on a $2$-sphere, but neither provides a way to decode from this metric space $(M,g)$ the combinatorial type of the polyhedron $P$ encoded in $(M,g)$. The proof is not constructive, but depends on a topological analysis within the space of admissible metrics on the $2$-sphere satisfying the three conditions of the characterization and yields, finally, the abstract fact of existence of an appropriate polyhedron, without describing its combinatorial type.

\subsection{Convex ideal hyperbolic polyhedra---Rivin}\index{polyhedron!ideal}
Rivin turns his attention to convex ideal polyhedra in $\mathbb{H}^{3}$ in~\cite{Rivin:1996} where he gives a full characterization in terms of exterior dihedral angles. The characterization begins with an analysis of the exterior dihedral angles of such a polyhedron reported in~\cite{HRS92} with details in~\cite{Rivin93} that goes as follows. Label each edge $e^{*}$ of the polyhedron $P^{*}$ dual to the ideal convex polyhedron $P$ by the exterior dihedral angle $\theta(e^{*})$ of the corresponding edge $e$ of $P$. Rivin's argument that these labels satisfy the following conditions is reproduced in the next two theorems.
\begin{enumerate}
\item[(i)] $0 < \theta(e^{*}) <\pi$ for all edges $e$ of $P$.
\item[(ii)] If the edges $e_{1}^{*}$, \dots , $e_{n}^{*}$ are the edges bounding a face of $P^{*}$, then $\theta(e_{1}^{*}) + \cdots  + \theta(e_{n}^{*}) = 2\pi$.
\item[(iii)] If $e_{1}^{*}$, \dots , $e_{n}^{*}$ forms a simple nontrivial circuit that does not bound a face of $P^{*}$, then $\theta(e_{1}^{*}) + \cdots  + \theta(e_{n}^{*}) > 2\pi$.
\end{enumerate}
Compare these conditions with the hypotheses of the Circumscribable Type Characterization on page~\pageref{CTC}. Now Condition (i) is a requirement of convexity and Condition (ii) is seen easily in the upper-half-space model by placing one of the ideal vertices $v$ of $P$ at infinity and observing that the link of $v$ is a convex Euclidean polygon. Indeed, the faces incident with $v$ lie on vertical Euclidean planes whose intersections with the $xy$-plane cut out a convex Euclidean polygon $L(v)$, and quite easily the sum $\theta(e_{1}^{*}) + \cdots  + \theta(e_{n}^{*})$ is precisely the sum of the turning angles of $L(v)$. Condition (iii) is a consequence of the following discrete, hyperbolic version of Fenchel's Theorem, in this case for closed polygonal curves in $\mathbb{H}^{3}$.

\begin{DTCHC}[Rivin~\cite{Rivin:1996}]
The total discrete geodesic curvature of a closed, polygonal, hyperbolic space curve is greater than $2\pi$, unless the vertices are collinear, in which case the total curvature is $2\pi$.
\end{DTCHC}

\begin{proof}
The total discrete geodesic curvature of the polygonal hyperbolic space curve $\gamma$ with vertices $p_{1}, \dots, p_{k} , p_{k+1} = p_{1}$ is $\sum_{i=1}^{k} \alpha_{i}$, where $\alpha_{i}$ is the turning angle of $\gamma$ at $p_{i}$. The angle $\alpha_{i}$ is just the exterior angle at $p_{i}$ of the triangle $\tau_{i} = p_{i-1}p_{i}p_{i+1}$. For $2\leq i \leq k-1$, let $T_{i}$ be the triangle $T_{i} = p_{1}p_{i}p_{i+1}$ with internal angles $a_{i}$, $b_{i}$, and $c_{i}$ at the respective vertices $p_{1}$, $p_{i}$, and $p_{i+1}$. Note that by considering the triangles $\tau_{i}$, $T_{i-1}$ and $T_{i}$ with common vertex $p_{i}$, the spherical triangle inequality gives
\begin{equation*}
c_{i-1} + b_{i} \geq \pi - \alpha_{i} \quad \text{for} \quad 3\leq i \leq k-1,
\end{equation*}
and
\begin{equation*}
b_{2} = \pi -\alpha_{2}, \quad c_{k-1} = \pi - \alpha_{k}, \quad \text{and} \quad\sum_{i=2}^{k-1} a_{i} \geq \pi - \alpha_{1}. 
\end{equation*}
Recalling that $\pi \geq a_{i} + b_{i} + c_{i}$ with equality only when $p_{1}$, $p_{i}$, and $p_{i+1}$ are collinear, and then summing, one has 
\begin{equation*}
(k-2)\pi \geq \sum_{i=2}^{k-1} (a_{i} + b_{i} + c_{i} ) \geq  k\pi - \sum_{i=1}^{k} \alpha_{i},
\end{equation*}
with equality only when $p_{1}, \dots, p_{k}$ are collinear.  
\end{proof}

\begin{Theorem}[Rivin~\cite{Rivin93}]\label{Theorem:Ideal}
 	The edge label $\theta(e^{*})$ of the polyhedron $P^{*}$ dual to the ideal convex polyhedron $P$ defined above satisfies Conditions {\em (i)--(iii)}.
\end{Theorem}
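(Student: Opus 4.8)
The plan is to verify each of the three conditions (i)--(iii) in turn, reusing machinery that has already been assembled in the excerpt. Condition (i) is essentially a restatement of convexity: the exterior dihedral angle at any edge of a convex polyhedron lies in $(0,\pi)$, strictly positive because $P$ is a genuine (non-degenerate) polyhedron and strictly less than $\pi$ because $P$ is convex with the two faces meeting along the edge not coplanar. There is nothing to prove beyond recording this. For condition (ii), I would pass to the upper-half-space model of $\mathbb{H}^{3}$ and place the ideal vertex $v$ of $P$ shared by the faces dual to $e_1^*,\dots,e_n^*$ at the point at infinity. The faces of $P$ incident to $v$ then lie on vertical Euclidean half-planes, and their traces on the bounding copy of $\mathbb{C}=\mathbb{R}^2$ form a convex Euclidean polygon $L(v)$ --- the horospherical link of $v$. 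The exterior dihedral angle $\theta(e_i^*)$ of $P$ along the edge $e_i$ incident to $v$ equals exactly the turning (exterior) angle of $L(v)$ at the corresponding corner, and since the total turning of a convex Euclidean polygon is $2\pi$, one gets $\sum_{i=1}^n \theta(e_i^*) = 2\pi$. The only care needed is to check that the edges bounding a face of $P^*$ are precisely the edges of $P$ meeting at one ideal vertex, which is the defining property of Poincar\'e duality.

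For condition (iii), I would invoke the Discrete Total Curvature theorem for polygonal hyperbolic loops (\textsc{DTCHC}), proved just above in the excerpt, together with the same local analysis used for (ii). Given a simple nontrivial circuit $e_1^*,\dots,e_n^*$ in $P^*$ that does not bound a face, the dual edges $e_1,\dots,e_n$ of $P$ form a closed loop; because the circuit is nontrivial and not a face, this loop visits a sequence of ideal vertices $p_1,\dots,p_k$ of $P$ that are \emph{not} all the vertices of a single face, hence not collinear in $\mathbb{H}^3$. Here the key observation --- the one I expect to be the main obstacle --- is that at each ideal vertex $p_j$ the partial sum of the $\theta(e_i^*)$'s over the edges of the circuit incident to $p_j$ bounds from below the turning angle of the polygonal space curve $\gamma = p_1 p_2 \cdots p_k p_1$ at $p_j$. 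This comparison requires a careful look at the horospherical link $L(p_j)$: the circuit enters and leaves $p_j$ along two of its edges, and the exterior dihedral angles accumulated along the link between those two edges dominate (by the spherical triangle inequality applied in the link, exactly as in the proof of \textsc{DTCHC}) the exterior angle of $\gamma$ at $p_j$. Summing over $j$ and applying \textsc{DTCHC} then yields $\sum_{i=1}^n \theta(e_i^*) \ge$ (total discrete curvature of $\gamma$) $> 2\pi$, the strict inequality coming from non-collinearity of $p_1,\dots,p_k$.

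The delicate point, and where I would spend the bulk of the work, is making the link-comparison at each ideal vertex rigorous: one must track orientations, confirm that a simple circuit in $P^*$ really does induce an immersed closed polygonal curve through distinct ideal vertices of $P$ with no backtracking at a vertex, and handle the possibility that the circuit passes through a single ideal vertex more than once (in which case one decomposes into shorter loops or argues that the angle contributions still add correctly). Once that bookkeeping is in place, conditions (i) and (ii) are immediate and (iii) follows by feeding the per-vertex inequalities into \textsc{DTCHC}. I would present the argument in the order (i), then (ii) via the link polygon $L(v)$, then (iii) via the link-comparison plus \textsc{DTCHC}, mirroring the structure already set up in the surrounding text.
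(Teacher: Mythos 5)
Your handling of conditions (i) and (ii) matches the paper's: (i) is just convexity, and (ii) is exactly the horospherical-link argument, with the $\theta(e_i^*)$ appearing as the turning angles of the convex Euclidean polygon $L(v)$ and hence summing to $2\pi$. The gap is in (iii), and it is not merely bookkeeping: the construction rests on a misreading of the duality that you yourself state correctly in (ii). A circuit $e_1^*,\dots,e_n^*$ in $P^*$ is a closed walk through \emph{vertices} of $P^*$, i.e., through a cyclic sequence of \emph{faces} $f_1,\dots,f_n$ of $P$ with $f_i\cap f_{i+1}=e_i$. The dual edges $e_1,\dots,e_n$ are the edges crossed by this closed band of faces; consecutive ones lie on a common face but need not share a vertex, and in general they do not form a closed edge-path through ideal vertices of $P$ at all (for a cube, the equatorial $4$-cycle in the dual octahedron is dual to the four pairwise \emph{disjoint} vertical edges). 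So the polygonal curve $\gamma=p_1p_2\cdots p_k$ through ideal vertices on which your argument hinges does not exist. And even where such a path happened to exist, the Discrete Total Curvature theorem could not be applied to it: its statement and proof (via the triangles $T_i=p_1p_ip_{i+1}$ and their internal angle sums) require finite vertices, whereas at ideal vertices all internal angles vanish and the comparison between dihedral angles at the $e_i$ and turning angles at the $p_j$ degenerates.

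The paper's actual route for (iii) is the one you need: form $F=\cup_{i=1}^n f_i$, a hyperbolic surface with cusps and boundary, extend geodesically across the boundary to a complete immersed surface $\widetilde F$ in $\mathbb{H}^3$, which --- precisely because the circuit does not bound a face of $P^*$ --- is an immersed hyperbolic cylinder with two infinite-area ends. Take the unique closed geodesic $\gamma$ on $\widetilde F$ freely homotopic to the meridian. This $\gamma$ sits in $\mathbb{H}^3$ as a closed polygonal curve with \emph{finite} vertices, one on each edge $e_i$, and its turning angle at $e_i$ is at most the exterior dihedral angle of $f_i$ and $f_{i+1}$ there; applying the Discrete Total Curvature theorem to $\gamma$ gives $\sum_i\theta(e_i^*)\ge(\text{total curvature of }\gamma)>2\pi$. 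Replace your ideal-vertex loop with this meridian geodesic and the rest of your outline goes through.
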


\begin{proof}
Conditions (i) and (ii) already are verified. For Condition (iii), the circuit $e_{1}^{*}$, \dots , $e_{n}^{*}$ that does not bound a face of $P^{*}$ corresponds to a chain of contiguous faces $f_{1}, \dots, f_{n}$ in $P$ with $f_{i} \cap f_{i+1} = e_{i}$. $F = \cup_{i=1}^{n} f_{i}$ is a hyperbolic surface with boundary and cusps, and can be completed by extending geodesically across the boundary components to a complete immersed surface $\widetilde{F}$ in $\mathbb{H}^{3}$ without boundary. The surface $\widetilde{F}$ is an immersed hyperbolic cylinder with both ends of infinite-area. This observation uses the fact that the circuit $e_{1}^{*}$, \dots , $e_{n}^{*}$ does not bound a face of $P^{*}$. Let $\gamma$ be the unique closed geodesic path on the surface $\widetilde{F}$ that is freely homotopic to the meridian. The curve $\gamma$ is immersed in $\mathbb{H}^{3}$ as a polygonal curve lying on $\widetilde{F}$ with turning angles at the edges $e_{i}$. But it is easy to see that the turning angle of $\gamma$ at edge $e_{i}$ is no more than the exterior dihedral angle of the faces $f_{i}$ and $f_{i+1}$ that meet along $e_{i}$. This implies that the sum, $\theta(e_{1}^{*}) + \cdots  + \theta(e_{n}^{*})$, which is the sum of these dihedral angles, is at least as large as the discrete geodesic curvature of $\gamma$, which in turn is greater than $2\pi$ by an application of the preceding theorem.
\end{proof}
Rivin was able to turn this around and prove a converse to the theorem, which gives the following characterization of convex, ideal hyperbolic polyhedra. The existence is proved in~\cite{Rivin:1996}, uniqueness in~\cite{Rivin:1994kg}, and necessity of the three conditions in~\cite{Rivin93}.

\begin{CIHP}[Rivin~\cite{Rivin:1996}]
Let $P^{*}$ be an abstract polyhedron. Then for any label $\theta: E(P^{*}) \to (0, \pi)$ that satisfies Conditions {\em (i)--(iii)}, there is a convex, ideal hyperbolic polyhedron $P$ in $\mathbb{H}^{3}$ whose Poincar\'e dual is $P^{*}$, and whose exterior dihedral angles at edges $e$ are given by the values $\theta(e^{*})$. Moreover, $P$ is unique up to hyperbolic congruence. Conversely, every such polyhedron $P$ satisfies Conditions {\em (i)--(iii)} as shown in Theorem~\ref{Theorem:Ideal}.
\end{CIHP}

This characterization also proves the Circumscribable and Inscribable Type Characterizations, answering Steiner's question of 1832. This is because a convex, ideal hyperbolic polyhedron in the Beltrami-Klein projective model of $\mathbb{H}^{3}$ is represented by a convex Euclidean polyhedron inscribed in the $2$-sphere $\mathbb{S}^{2}$.

Since Rivin's work of the nineteen-nineties, several topologists and geometers have taken up the mantel and continued to unearth these beautiful gems of discrete geometry. I'll close this survey with the mention of two examples in the next section, the first from the first decade of the new century, and the second of very recent origin.

\subsection{New millennium excavations}
Space constraints forbid too much further development of the topic, but I would be remiss if I didn't mention at least these two beautiful theorems, the first characterizing convex hyperideal hyperbolic polyhedra by Bao and Bonahon, and the second giving a complete answer to Steiner's original question when interpreted as broadly as possible, this time by Chen and Schlenker. I develop just enough of these topics to state the main results, and leave the interested reader the task of perusing the original articles for details of the proofs.

\subsubsection{Hyperideal polyhedra---Bao and Bonahon}\index{polyhedron!hyperideal}
A \textit{hyperideal} polyhedron in $\mathbb{H}^{3}$ is a non-compact polyhedron that may be described most easily in the Beltrami-Klein projective model $\mathbb{H}^{3} = \mathbb{B}^{3} \subset \mathbb{RP}^{3}$ as the intersection with $\mathbb{B}^{3}$ of a projective polyhedron all of whose vertices lie outside of $\mathbb{B}^{3}$ while each edge meets $\mathbb{B}^{3}$. Bao and Bonahon~\cite{BaoBonahon:2002} classify hyperideal polyhedra up to hyperbolic congruence in terms of their dihedral angles and combinatorial type in much the same vein as Rivin's classification of ideal hyperbolic polyhedra. Note that Bao and Bonahon do allow for the vertices to lie on the sphere $\mathbb{S}^{2} = \partial \mathbb{B}^{3}$ and hence their characterization reduces to Rivin's for ideal polyhedra.

I will state the characterization in terms of conditions on the $1$-skeletal graph of the dual polyhedron using Steinitz's famous characterization of those graphs that may serve as the dual graph of a convex polyhedron in $\mathbb{E}^{3}$ as precisely the planar, $3$-connected graphs. 

\begin{CCHP}[Bao and Bonahon~\cite{BaoBonahon:2002}]
Let $\mathcal{G}$ be a $3$-connected graph embedded in $\mathbb{S}^{2}$ and $\theta: E(\mathcal{G}) \to (0, \pi)$. There is a hyperideal polyhedron $P$ in $\mathbb{H}^{3}$ with dual graph isomorphic with $\mathcal{G}$ and exterior dihedral angles given by $\theta$ if and only if the following conditions are satisfied.
\begin{enumerate}
\item[(i)] If $e_{1}$, \dots , $e_{n}$ forms a simple nontrivial circuit of edges of $\mathcal{G}$, then $\theta(e_{1}) + \cdots  + \theta(e_{n}) \geq 2\pi$, with equality possible only if $e_{1}, \dots, e_{n}$ bounds a component of $\mathbb{S}^{2} - \mathcal{G}$.
\item[(ii)] If $\gamma = e_{1}$, \dots , $e_{n}$ forms a simple path of edges of $\mathcal{G}$ that connects two vertices of $\mathcal{G}$ that lie in the closure of a component $C$ of $\mathbb{S}^{2} - \mathcal{G}$, but $\gamma$ does not lie in the boundary of $C$, then $\theta(e_{1}) + \cdots  + \theta(e_{n}) > \pi$.
\end{enumerate}
Moreover if $P'$ is the projective polyhedron with $P' \cap \mathbb{H}^{3} = P$, a vertex $v$ of $P'$ is located on the sphere at infinity of $\mathbb{H}^{3}$ if and only if equality holds in Condition {\em (i)} for the boundary of the corresponding component of $\mathbb{S}^{2} - \mathcal{G}$.

Finally, the hyperideal polyhedron $P$ is unique up to hyperbolic congruence.
\end{CCHP}

I should mention that Hodgson and Rivin's~\cite{HodgsonRivin:1993} characterization of compact and convex hyperbolic polyhedra can be applied to appropriate truncated polyhedra associated with those hyperideal polyhedra for which no vertex lies on the sphere at infinity to characterize them.

Define a \textit{strictly hyperideal} polyhedron to be the intersection of $\mathbb{B}^{3}$ with a projective polyhedron $P$ all of whose vertices lie outside the closed unit ball $\overline{\mathbb{B}}^{3} = \mathbb{B}^{3} \cup \mathbb{S}^{2}$, but all of whose faces meet $\mathbb{B}^{3}$. Note that this definition allows that an edge of $P$ may lie entirely outside the closed ball $\overline{\mathbb{B}}^{3}$. These are yet to be characterized, but I mention that the article~\cite{BBP18} verifies the rigidity of these that are bounded and convex, as long as no edges are tangent to the unit sphere. The proof again uses Cauchy's toolbox.

\subsubsection{Weakly inscribed polyhedra---Chen and Schlenker} Recall Steiner's question of which polyhedra inscribe or circumscribe a sphere that Rivin answered. A more faithful translation of Steiner's question from the German is ``Does every polyhedron have a combinatorially equivalent realization that is inscribed or circumscribed to a sphere, or to another quadratic surface? If not, which polyhedra have such realizations?'' He includes the definition that ``A polyhedron $P$ is \textit{inscribed to} a quadratic surface $S$ if all the vertices of $P$ lie on $S$,'' and further defines that $P$ is \textit{circumscribed to} $S$ if all of its facets are tangent to $S$. As before I will concentrate on inscription since polarity relates circumscription to inscription. In the very recent preprint~\cite{CS17}, Chen and Schlenker point out that the apparent grammar mistake---inscribed \textit{to} instead of \textit{in} $S$---makes a significant distinction.

Generally Steiner's question has been interpreted to ask about inscription of the polyhedron $P$ to a quadratic surface $S$ in Euclidean space $\mathbb{E}^{3}$, and in this setting $P$ is contained in the bounded component of the complement of $S$, i.e., $P$ is ``inside'' $S$, hence the change from inscribed ``to'' to ``in''. But Steiner's question makes sense in projective space as well, and in this setting a polyhedron may be inscribed to a surface without being inscribed in the surface. To be a bit more illustrative, consider the unit sphere $\mathbb{S}^{2}$ sitting in $\mathbb{E}^{3} \subset \mathbb{RP}^{3}$. Now $\mathbb{S}^{2}$ usually is thought of as the boundary of the open unit ball $\mathbb{B}^{3}$ that serves as the projective model of hyperbolic space, and this is what Rivin exploited in his characterization of those polyhedra inscribable in $\mathbb{S}^{2}$. But $\mathbb{S}^{2}$ is also the boundary of the complement $\mathbb{RP}^{3} - \overline{\mathbb{B}}^{3}$, which has a complete metric making it into a model of de Sitter space $d\mathbb{S}^{3}$. In this setting a projective polyhedron may have its vertices on the sphere $\mathbb{S}^{2}$ and yet not lie entirely in the ball $\mathbb{B}^{3}$ so that it is inscribed to $\mathbb{S}^{2}$, but not inscribed in $\mathbb{S}^{2}$ in the usual meaning. Following Chen and Schlenker, I will revise Steiner's terminology to emphasize the difference between \textit{inscribed in} and \textit{inscribed to but not in}.

\begin{Definition}[\textsc{strong and weak inscription}]
In the real projective space $\mathbb{RP}^{3}$, a polyhedron $P$ inscribed to a quadratic surface $S$ is \textit{strongly inscribed in} $S$ if the interior of $P$ is disjoint from $S$, and \textit{weakly inscribed to} $S$ otherwise.
\end{Definition}

Before presenting a characterization of those polyhedra weakly inscribed to a sphere in $\mathbb{RP}^{3}$, allow a word about polyhedra inscribed to other quadratic surfaces. This topic has been neglected until rather recently. There are only three quadratic surfaces in $\mathbb{RP}^{3}$ up to projective transformations, and these are the sphere, the one-sheeted hyperboloid, and the cylinder. Danciger, Maloni, and Schlenker in~\cite{DMS14} characterized the combinatorial types of polyhedra that are strongly inscribable in a one-sheeted hyperboloid or in a cylinder, and of course Rivin takes care of those strongly inscribable in a sphere. Chen and Schlenker's work reported here characterizes those polyhedra weakly inscribable to a sphere, and the characterization of those weakly inscribable to the remaining two quadratic surfaces is the subject of current research by Chen and Schlenker. 

\begin{WIC}[Chen and Schlenker~\cite{CS17}]
	A $3$-connected planar graph $\Gamma$ is the $1$-skeleton of a polyhedron $P\subset \mathbb{RP}^{3}$ weakly inscribed to a sphere if and only if $\Gamma$ admits a vertex-disjoint cycle cover by two cycles $C_{1}$ and $C_{2}$ with the following property. Color edge $uv$ red if $u$ and $v$ both belong to $C_{1}$ or both belong to $C_{2}$, and color it blue otherwise. Then there is a weight function $w : E(\Gamma) \to \mathbb{R}$ such that
	\begin{enumerate}
		\item[(i)] $w > 0$ on red edges and $w< 0$ on blue ones;
		\item[(ii)] $w$ sums to $0$ over the edges adjacent to a vertex $v$, unless $v$ is the only vertex on $C_{1}$ or $C_{2}$ (trivial cycle), in which case $w$ sums to $-2\pi$ over the edges adjacent to $v$.
	\end{enumerate}
\end{WIC}

I end this survey of progress in the characterization of polyhedra since Thurston's observation that every polyhedron type in $\mathbb{E}^{3}$ has a realization that midscribes a sphere with a description of the original rigidity theorem of Cauchy that is so instrumental in many of the proofs of the results surveyed here.

\subsection{Addendum:~Cauchy's toolbox} %1813 Rigidity Theorem}
\label{Section:CRT}\index{polyhedron!Cauchy rigidity}
In this bonus section I review Cauchy's celebrated rigidity theorem~\cite{Cauchy1813} of 1813 on the uniqueness of convex, bounded polyhedra in $\mathbb{E}^{3}$. The theorem concerns two convex polyhedra with equivalent combinatorics and with corresponding faces congruent. Cauchy's Rigidity Theorem states that the two polyhedra must be congruent globally, meaning that there is a Euclidean isometry of the whole of $\mathbb{E}^{3}$ mapping one to the other. Like many of the great theorems of mathematics, the proof is of more importance than the theorem itself. As stated earlier in the introduction to this section, the toolbox Cauchy developed has been instrumental in the past two hundred year development of the theory of polyhedra, especially in its rigidity theory. The proof, though at places clever and even subtle, overall is rather straightforward with a simplicity that belies its importance.  

Cauchy's proof has two components---the one geometric and the other combinatorial. The geometric component is the Discrete Four Vertex Lemma, which follows from an application of Cauchy's Arm Lemma. Denote a convex planar or spherical polygon $P$ merely by listing its vertices in cyclic order, say as $P = p_{1} \dots p_{n}$. The Euclidean or spherical length of the side $p_{i}p_{i+1}$ is denoted as $|p_{i}p_{i+1}|$ and the interior angle at $p_{i}$ is denoted as $\angle p_{i}$.

\begin{CAL}\label{lem:cauchyArm}
	Let $P = p_1 \dots p_n$ and $P' = p'_1 \dots p'_n$ be two convex planar or spherical polygons such that, for $1\leq i < n$, $|p_i p_{i+1}| = |p_i' p_{i+1}'|$, and for $1\leq i < n-1$, $\angle p_{i+1}  \leq \angle  p'_{i+1} $. Then $|p_n p_1| \leq |p_n' p_1'|$ with equality if and only if $\angle  p_{i+1}  = \angle  p'_{i+1} $ for all $1\leq i < n - 1$. 
\end{CAL}

Cauchy's original proof of the lemma had a gap that subsequently was filled by Ernst Steinitz. A straightforward inductive proof, such as the one in \cite{FuchsTab:2007}, relies on the law of cosines and the triangle inequality. 

Now let $P$ and $P'$ be convex planar or spherical polygons with the same number of sides whose corresponding sides have equal length. Label each vertex of $P$ with a plus sign $+$ or a minus sign $-$ by comparing its angle with the corresponding angle in $P'$: if the angle at $p_{i}$ is larger than that at $p_{i}'$, label it with a $+$, if smaller, a $-$, and if equal, no label at all. Using the Cauchy Arm Lemma, the proof of the following lemma is straightforward. 

\begin{DFVL}\label{lem:cauchyFourVertex}
	Let $P$ and $P'$ be as in the preceding paragraph and label the vertices of $P$ as described. Then either $P$ and $P'$ are congruent, or a walk around $P$ encounters at least four sign changes, from $-$ to $+$ or from $+$ to $-$.
\end{DFVL}

\begin{proof}
	First note that because a polygon is a cycle, the number of sign changes must be even. If no vertex is labeled, then the two polygons are congruent. Assume then that some of the vertices are labeled, but all with the same label. Then Cauchy's Arm Lemma implies that there exists a pair of corresponding edges in $P$ and $P'$ with different lengths, a contradiction. 
	
	Assume now that there are exactly two sign changes of the labels of $P$.  Select two edges $p_i p_{i+1}$ and $p_j p_{j+1}$ (oriented counter-clockwise) of $P$ such that all of the $+$ signs are along the subchain from $p_{i+1}$ to $p_j$ and all of the $-$ signs are along the subchain from $p_{j+1}$ back to $p_i$. Subdivide both edges in two by adding a vertex at the respective midpoints $X$ and $Y$ of $p_i p_{i+1}$ and $p_j p_{j+1}$. Similarly, subdivide the corresponding edges $p'_i p'_{i+1}$ and $p'_j p'_{j+1}$ in $P'$ at midpoints $X'$ and $Y'$. Denote the subchain of $P$ from $X$ to $Y$ by $P_{+}$ and the subchain from $Y$ back to $X$ by $P_{-}$. Similarly for $P'_{+}$ and $P'_{-}$ in $P'$. Applying the arm lemma to $P_+$ and $P_+'$ implies that $|X Y| > |X' Y'|$, and, similarly, an application to $P_{-}$ and $P'_{-}$ implies that $|X Y| < |X' Y'|$, a contradiction. 
\end{proof}

This brings us to the combinatorial component of Cauchy's proof. A  nice proof of the following lemma appears in~\cite{FuchsTab:2007} and follows from an argument based on the Euler characteristic of a sphere.

\begin{CCL}\label{lem:cauchycombinatorial}
	Let $P$ be an abstract spherical polyhedron. Then for any labeling of any non-empty subset of the edges of $P$ with $+$ and $-$ signs, there exists a vertex $v$ that is incident to an edge labeled with a $+$ or a $-$ sign for which one encounters at most two sign changes in labels on the edges adjacent to $v$ as one walks around the vertex.
\end{CCL}

\begin{CRT}
If two bounded, combinatorially equivalent, convex polyhedra in $\mathbb{E}^{3}$ have congruent corresponding faces, then they are congruent by a Euclidean isometry of $\mathbb{E}^{3}$.
\end{CRT}

\begin{proof}
Assume that bounded, convex polyhedra $P$ and $P'$ have the same combinatorics and congruent corresponding faces. For each edge of $P$, label its dihedral angle with a $+$ or a $-$ depending on whether it is larger or smaller than the corresponding dihedral angle in $P'$. If $P$ and $P'$ are not congruent, Cauchy's Combinatorial Lemma provides a vertex $v$ that is incident to an edge labeled with a $+$ or a $-$ sign, and around which there are at most two sign changes. Intersect $P$ with a small sphere centered at $v$ (one that contains no other vertex of $P$ on its interior) to obtain a convex spherical polygon, and intersect $P'$ with a sphere centered at the corresponding vertex $v'$ and of the same radius. By construction both spherical polygons have the same edge lengths, and the angles between edges are given by the dihedral angles between faces at $v$ and $v'$. An application of the Four Vertex Lemma implies that there are at least four sign changes, contradicting that there are at most two. It follows that $P$ and $P'$ are congruent. 
\end{proof}
 
Both the bounded and convex requirements are necessary. For example, a polyhedron $\hat{H}$ in the shape of a cubical house with a shallow pyramidal roof has a cousin $\check{H}$ obtained by inverting the roof. $\hat{H}$ is not congruent to $\check{H}$, though these are combinatorially equivalent with congruent corresponding faces.

%\textbf{The Key Players:} \textit{Bill Thurston, Oded Schramm, Igor Rivin, Hogsden, Ba, Francis Bonahon, Jean-Mark Schlenker}.

%\section{Study III: Discrete and Combinatorial Riemann Mappings} \textbf{The Key Players:} \textit{Oded Schramm, Jim Cannon, Bill Floyd, Walter Parry, Mikhail Gromov, Richard Kenyon, Ken Stephenson}.

\section{In Closing, an Open Invitation.}
This has been a whirlwind tour through the four decade history of the influence of one theorem brought to prominence by the mathematician we celebrate in this volume. Any result that has spawned such a great body of significant work leaves in its wake a bounty of open questions, problems, conjectures, and possible applications that await the right insights for resolution and explanation. What of the Koebe Uniformization Conjecture, of the question of where EQ-type sits among EEL- and VEL-type, of circle packings that mimic rational functions with arbitrary branching, of the existence and rigidity of inversive distance circle packings, of characterizations of projective polyhedra up to M\"obius equivalence generalizing Bao-Bonahon, or of combinatorial rather than metric characterizations of hyperbolic polyhedra of various stripes? I have not covered in this survey the myriad of applications that circle packing has spawned, particularly in the realm of computer graphics and imaging, where each month sees more and more new and original publications. And so I close this tribute to the influence of this one theorem of Bill Thurston with an invitation to any reader who has been captured by the beauty and elegance of the results outlined in this survey to explore further on his or her own the wider discipline of Discrete Conformal Geometry, in both its theoretical and practical bents, and perhaps to add to our understanding and appreciation of this beautiful landscape opened up by the imagination of Bill Thurston.

\bibliographystyle{amsplain}
\bibliography{OurBib}

\end{document}